\newtheorem{theorem}{Theorem}[section]
\newtheorem{corollary}[theorem]{Corollary}
\newtheorem{proposition}[theorem]{Proposition}
\newtheorem{lemma}[theorem]{Lemma}
\theoremstyle{definition}
\theoremstyle{remark}
\newtheorem{remark}[theorem]{Remark}
\newcommand{\Z}{\mathbb{Z}}
\newcommand{\bbC}{\mathbb{C}}
\newcommand{\calC}{\mathcal{C}}
\newcommand{\calE}{\mathcal{E}}
\newcommand{\calH}{\mathcal{H}}
\newcommand{\calS}{\mathcal{S}}
\newcommand{\id}{\mathrm{id}}
\newcommand{\im}{\operatorname{im}}
\newcommand{\coker}{\operatorname{coker}}
\newcommand{\rank}{\operatorname{rank}}
\newcommand{\lk}{\operatorname{lk}}
\DeclareMathOperator{\ttimes}{\tilde{\times}}
\DeclareMathOperator{\bcs}{\natural}
\newcommand{\leqs}{\leqslant}
\newcommand{\geqs}{\geqslant}
\newcommand{\mods}[1]{\operatorname{\mathnormal{#1}-mod}}
\newcommand{\fsl}{\mathfrak{sl}}
\newcommand{\SO}{\mathrm{SO}}
\newcommand{\Ext}{\mathrm{Ext}}
\newcommand{\Hom}{\mathrm{Hom}}
\DeclareRobustCommand{\one}{\mathbin{\text{\includegraphics[height=\heightof{$\mathbf{1}$}]{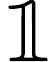}}}}
\newcommand{\subalign}[1]{
  \vcenter{
    \Let@ \restore@math@cr \default@tag
    \baselineskip\fontdimen10 \scriptfont\tw@
    \advance\baselineskip\fontdimen12 \scriptfont\tw@
    \lineskip\thr@@\fontdimen8 \scriptfont\thr@@
    \lineskiplimit\lineskip
    \ialign{\hfil$\m@th\scriptstyle##$&$\m@th\scriptstyle{}##$\crcr
      #1\crcr
    }
  }
}
\def\clap#1{\hbox to 0pt{\hss#1\hss}}
\def\mathclap{\mathpalette\mathclapinternal}
\def\mathclapinternal#1#2{%
\clap{$\mathsurround=0pt#1{#2}$}}
\newcommand{\FRCob}{3\mathrm{Cob}^\sigma}
\newcommand{\RHB}{4\mathrm{HB}}
\newcommand{\KTan}{\mathrm{KTan}}
\newcommand{\Algf}{4\mathrm{Alg}}
\newcommand{\pic}[2][0]{\raisebox{-0.5\height + 2.5pt + #1pt}{\includegraphics{#2.pdf}}}
\newcommand\arxiv[2]{\href{https://arXiv.org/abs/#1}{\texttt{arXiv:\allowbreak #1} #2}}
\newcommand\doi[2]{\href{https://doi.org/#1}{#2}}
\DeclareRobustCommand{\myuline}[1]{
 \ifmmode \text{\uline{$\phantom{#1}$}\llap{\contour{white}{$#1$}}}
 \else \uline{\phantom{#1}}\llap{\contour{white}{#1}} \fi
}
\newcommand{\dfrmtn}[1]{$#1$-de\-for\-ma\-tion}
\newcommand{\dfrmtns}[1]{$#1$-de\-for\-ma\-tions}
\newcommand{\dmnsnl}[1]{$#1$-di\-men\-sion\-al}
\newcommand{\hndlbd}[1]{$#1$-han\-dle\-bod\-y}
\newcommand{\hndlbds}[1]{$#1$-han\-dle\-bod\-ies}
\newcommand{\mnfld}[1]{$#1$-man\-i\-fold}
\newcommand{\mnflds}[1]{$#1$-man\-i\-folds}
\newcommand{\qvlnc}[1]{$#1$-e\-quiv\-a\-lence}
\newcommand{\qvlnt}[1]{$#1$-e\-quiv\-a\-lent}
\numberwithin{equation}{section}
\begin{document}

\raggedbottom

\title{Refined Bobtcheva--Messia Invariants of 4-Dimensional 2-Handlebodies}

\author[A. Beliakova]{Anna Beliakova} 
\address{Institute of Mathematics, University of Zurich, Winterthurerstrasse 190, CH-8057 Zurich, Switzerland} 
\email{anna@math.uzh.ch}

\author[M. De Renzi]{Marco De Renzi} 
\address{Institute of Mathematics, University of Zurich, Winterthurerstrasse 190, CH-8057 Zurich, Switzerland} 
\email{marco.derenzi@math.uzh.ch}

\keywords{Quantum Invariants, $4$-Dimensional $2$-Handlebodies, Hopf Group-Coalgebras, Restricted Quantum $\fsl_2$, Spin Structures.}

\subjclass{57K16, 17B37, 16T05, 57R15}

\begin{abstract}
 In this paper we refine our recently constructed invariants of \dmnsnl{4} \hndlbds{2} up to \dfrmtns{2}. More precisely, we define invariants of pairs of the form $(W,\omega)$, where $W$ is a \dmnsnl{4} \hndlbd{2}, $\omega$ is a relative cohomology class in $H^2(W,\partial W;G)$, and $G$ is an abelian group. The algebraic input required for this construction is a unimodular ribbon Hopf $G$-coalgebra. We study these refined invariants for the restricted quantum group $U = U_q \fsl_2$ at a root of unity $q$ of even order, and for its braided extension $\tilde{U} = \tilde{U}_q \fsl_2$, which fits in this framework for $G=\Z/2\Z$, and we relate them to our original invariant. We deduce decomposition formulas for the original invariants in terms of the refined ones, generalizing splittings of the Witten--Reshetikhin--Turaev invariants with respect to spin structures and cohomology classes. Moreover, we identify our non-refined invariant associated with the small quantum group $\bar{U} = \bar{U}_q \fsl_2$ at a root of unity $q$ whose order is divisible by $4$ with the refined one associated with the restricted quantum group $U$ for the trivial cohomology class $\omega=0$.
\end{abstract}

\maketitle

\section{Introduction}\label{S:introduction}

The central object of study in this paper are \dmnsnl{4} \hndlbds{2}\index{2-handlebody}, which are smooth \mnflds{4} obtained from the $4$-ball by attaching finitely many $1$-han\-dles and $2$-han\-dles. These handlebodies are usually represented by Kirby diagrams\index{Kirby diagram}, with dotted and undotted components corresponding to the attachment of $1$-han\-dles and $2$-han\-dles, respectively. We do not consider these \mnflds{4} up to diffeomorphism, but rather up to a more subtle equivalence relation called \textit{\dfrmtn{2}}\index{2-deformation}, or \textit{\qvlnc{2}}\index{2-equivalence}. Recall that, if a pair of \dmnsnl{4} handlebodies is diffeomorphic, then their handle decompositions can be related by a finite sequence of isotopies of attaching maps, handle slides, and creation/removal of canceling pairs of handles of index $0/1$, $1/2$, $2/3$, and $3/4$. By definition, \dfrmtns{2} are those diffeomorphisms implemented by sequences of handle moves that never introduce $3$-han\-dles and $4$-han\-dles, that is, creation/removal of $2/3$ and $3/4$ pairs are forbidden. It remains open whether \dfrmtns{2} form a proper subclass of the class of diffeomorphisms. This question is closely related to a deep open problem in combinatorial group theory, the Andrew--Curtis conjecture\index{Andrews-Curtis conjecture}. 

In \cite{BD21} we defined a wide class of new invariants of \dmnsnl{4} \hndlbds{2} up to \dfrmtns{2}. More precisely, starting from any unimodular ribbon category\index{ribbon category}\index{ribbon category!unimodular} $\calC$, we constructed a braided monoidal functor $J_4$, with source the category $\RHB$ of \dmnsnl{4} \hndlbds{2}, and target $\calC$. This can be considered as a close relative\footnote{In $\RHB$, morphisms are not cobordisms, the monoidal structure is not induced by disjoint union, and the braiding is not symmetric.} of a \dmnsnl{4} \textit{TQFT}\index{TQFT} (short for \textit{Topological Quantum Field Theory}). By a result of Bobtcheva and Piergallini \cite{BP11}, $\RHB$ is freely generated by a single \textit{BPH algebra}\index{Hopf algebra}\index{Hopf algebra!Bobtcheva-Piergallini} (short for \textit{Bobtcheva--Piergallini Hopf algebra}), the solid torus. For a unimodular ribbon category $\calC$, we proved that the end 
\[
 \calE = \int_{X \in \calC} X \otimes X^*
\]
is always a BPH algebra in $\calC$, and therefore the assignment of $\calE$ to the generator of $\RHB$ can always be extended to a functor $J_4 : \RHB \to \calC$, see \cite[Theorem~1.1]{BD21}. Notice that $\RHB$ comes equipped with a natural boundary functor $\partial$ with target the category $\FRCob$ of connected \dmnsnl{3} cobordisms with connected boundary (and signature). This category (or rather its quotient obtained by forgetting the signature) was intensively studied by Crane and Yetter, who realized that the punctured torus admits the structure of a braided Hopf algebra\index{Hopf algebra!braided} \cite{CY94}, by Kerler, who found a set of generating morphisms and a long list of elegant and conceptual relations among them \cite{K01}, and by Habiro, who announced a complete algebraic presentation \cite{A11}. The functor $\partial : \RHB \to \FRCob$ is compatible with the algebraic structure discovered in \cite{BP11}, and sends the BPH algebra generator of $\RHB$ (the solid torus) to the one of $\FRCob$ (the punctured torus), whose algebraic structure is a quotient of the previous one by a single additional relation (which defines a \textit{factorizable} BPH algebra).

When $\calC$ is factorizable (that is, when it has no non-trivial transparent objects, or equivalently when the Hopf copairing on the end is non-degenerate), we showed that $J_4$ factors as $J_3^\sigma \circ \partial$ for a functor $J_3^\sigma : \FRCob \to \calC$, see \cite[Theorem~1.2]{BD21}. This explains why, in all previous attempts at generalizing quantum invariants of \mnflds{3} (based on factorizable ribbon categories\index{ribbon category!factorizable}) to \mnflds{4}, the result always depended exclusively on \dmnsnl{3} boundaries (together with the signature, since $J_3^\sigma$ is affected by the usual framing anomaly).

In order to detect \dmnsnl{4} \hndlbds{2} that are diffeomorphic but not equivalent under 2-deformations, the functor $J_4$ needs to vanish against $S^2 \times D^2$. Indeed, our invariant is multiplicative under boundary connected sum, and a finite number of stabilizations by $S^2 \times D^2$ is sufficient to turn every pair of diffeomorphic \dmnsnl{4} \hndlbds{2} into a \qvlnt{2} pair. Setting $\calC = \mods{H}$ for a unimodular ribbon Hopf algebra\index{Hopf algebra!unimodular}\index{Hopf algebra!ribbon} $H$, we obtained that $J_4 (S^2 \times D^2)$ is an invertible scalar if and only if $H$ is cosemisimple (meaning $H^*$ is semisimple). By observing that the small quantum group\index{quantum sl(2)}\index{quantum sl(2)!small} $\bar{U} = \bar{U}_q \fsl_2$ at a root of unity $q$ of order $r \equiv 0 \pmod 4$ is neither factorizable nor (co)semisimple, we concluded that the corresponding invariant has the potential of detecting interesting \dmnsnl{4} phenomena.

This paper is devoted to the study of these invariants. More precisely, we investigate the scalars assigned by $J_4$ to endomorphisms of the tensor unit (the $3$-ball) in $\RHB$ in the case $\calC = \mods{H}$. In order to simplify the notation, we will denote the resulting invariant, which was first constructed by Bobtcheva and Messia \cite{BM02}, by $J_H$. Notice that, in this case, the end $\calE$ can be explicitly described as the adjoint representation of $H$, which determines a BPH algebra in $\mods{H}$ denoted $\myuline{H}$, and called the \textit{transmutation}\index{transmutation} of $H$. As a first step, we extend the definition of $J_H$ by allowing $H$ to be a unimodular ribbon Hopf $G$-coalgebra\index{group coalgebra}, see Section~\ref{S:group-coalgebras}. In this case, we construct an invariant of pairs $(W,\omega)$, where $W$ is a \dmnsnl{4} \hndlbd{2}, and $\omega$ is a relative cohomology class in $H^2(W,\partial W;G)$, see Theorem~\ref{T:main}.

As the main motivating example for this construction, we discuss in detail the restricted quantum group\index{quantum sl(2)!restricted} $U = U_q \fsl_2$ at a root of unity $q$ of even order $2p$, which is a unimodular Hopf $\Z/2\Z$-coalgebra that contains the small quantum group $\bar{U}$ as its degree zero part. The former is not ribbon, but admits a ribbon extension $\tilde{U} = \tilde{U}_q \fsl_2$. However, $U$ is factorizable, while $\tilde{U}$ is not. The adjoint representation of $U$ is closed under the adjoint action of $\tilde{U}$, and thus determines an object in $\mods{\tilde{U}}$. This object admits a transmutation, denoted $\myuline{U}$, which provides a BPH algebra in $\mods{\tilde{U}}$, as established in Proposition~\ref{P:3-modular}. Thanks to \cite[Theorem~1.2]{BD21}, the corresponding invariant, denoted $J_U$, only depends on the \dmnsnl{3} boundary and signature of \dmnsnl{4} \hndlbds{2}, see Corollary~\ref{C:U_factorizable}. Furthermore, the refined invariant associated with $\tilde{U}$ can be actually computed entirely inside $U$, as proved in Proposition~\ref{P:restricted}, and deserves therefore to be denoted $J_U$ too.

In Section~\ref{S:decompositions} we establish a few decomposition formulas for both the refined and the non-refined invariant $J_U$. The precise form of these decomposition formulas depends on the arithmetic properties of the order $2p$ of the root of unity $q$. When $p \equiv 0 \pmod 4$, we show that $U$ can be used to define an invariant of \mnflds{3} equipped with spin structures\index{spin structure}, which is always denoted $J_U$, and that
\begin{equation*}
 J_U(W,\omega) = \lambda(v_-)^{\sigma(W)} \sum_{s \in \calS(\partial W,\omega)} J_U(\partial W,s),
\end{equation*}
where $\calS(\partial W,\omega)$ is the set of spin structures $s$ on $\partial W$ with second relative Stiefel--Whitney class $w_2(W,s) = \omega$, where $\sigma(W)$ is the signature of $W$, and where $\lambda$ and $v_-$ are the left integral and the inverse ribbon element of $U$ respectively. For the non-refined invariant, we obtain
\begin{equation*}
 J_U(W) = \lambda(v_-)^{\sigma(W)} \sum_{s \in \calS(\partial W)} J_U(\partial W,s),
\end{equation*}
where $\calS(\partial W)$ is the set of all spin structures $s$ on $\partial W$. This result generalizes the well-known decomposition of the Witten--Reshetikhin--Turaev\index{Witten-Reshetikhin-Turaev invariant} (WRT) invariants in terms of spin structures \cite{B92, B98}. When $p \equiv 2 \pmod 4$, the picture is similar, although this time $U$ can be used to define an invariant of \mnflds{3} equipped with first cohomology classes with $\Z/2\Z$-coefficients, and
\begin{equation*}
 J_U(W,\omega) = \lambda(v_-)^{\sigma(W)} \sum_{\varphi \in \calH(\partial W,\omega)} J_U(\partial W,\varphi),
\end{equation*}
where $\calH(\partial W,\omega)$ is the set of cohomology classes $\varphi$ in $H^1(\partial W;\Z/2\Z)$ satisfying $\delta^*(\varphi) = \omega$ for the coboundary homomorphism 
\[
 \delta^* : H^1(\partial W;\Z/2\Z) \to H^2(W,\partial W;\Z/2\Z)
\]
coming from the long exact sequence of the pair $(W,\partial W)$ in cohomology with $\Z/2\Z$-coefficients. Again, for the non-refined invariant, we obtain
\begin{equation*}
 J_U(W) = \lambda(v_-)^{\sigma(W)} \sum_{\varphi \in \calH(\partial W)} J_U(\partial W,\varphi),
\end{equation*}
where $\calH(\partial W) = H^1(\partial W;\Z/2\Z)$, compare with \cite{B98} for the analogous decomposition of the WRT invariants. This is proved in Theorem~\ref{T:decomposition}, see also Remark~\ref{R:to_be_precise}.

In addition, we show in Proposition~\ref{P:restricted_vs_small} that, when the cohomology class is taken to be $\omega = 0$, the refined invariant $J_U$ associated with the restricted quantum group $U$ recovers the non-refined invariant $J_{\bar{U}}$ associated with the small quantum group $\bar{U}$. In other words, we have
\begin{equation*}
 J_{\bar{U}}(W) = J_{U}(W,0).
\end{equation*}
Using our first decomposition formula, we deduce that, for $p \equiv 0 \pmod 4$, and for $W$ a non-spin \mnfld{4}, we have
\begin{equation*}
 J_{\bar{U}}(W) = 0,
\end{equation*}
because in this case $\calS(\partial W,0) = \varnothing$, and so $J_U(W,0)=0$.
 
These results imply that the scalar invariant $J_{\bar{U}}$ essentially depends on \dmnsnl{3} boundaries, and cannot detect any truly \dmnsnl{4} phenomenon beyond the signature, the Euler characteristic (see Appendix~\ref{A:rescaling}), and the spin status of \dmnsnl{4} \hndlbds{2}. This happens because the small quantum group $\bar{U}$ is a finite index Hopf subalgebra of a factorizable Hopf algebra\index{Hopf algebra!factorizable}, the restricted quantum group $U$. We expect the invariant to behave similarly for other quantum groups, such as those discussed in \cite{LO16}. We remark however two interesting possibilities going forward: first of all, the functor $J_4 : \RHB \to \mods{\bar{U}}$ seems to be more sensitive and rich than its underlying scalar invariant, as it truly manages to distinguish $1$-han\-dles from $2$-han\-dles in the non-factorizable case. Furthermore, the question of finding non-trivial central elements that could be used to deform the scalar invariant $J_{\bar{U}}$ as in \cite[Theorem~2.14]{BM02} remains wide open. More generally, the same holds in the quest for other BPH algebras that could be sensitive to \dmnsnl{4} topology.

\subsection*{Acknowledgments}

Our work was supported by National Center of Competence in Research (NCCR) SwissMAP, and by Grant n.~200020\_207374 of the Swiss National Science Foundation (SNSF).

\section{Hopf group-coalgebras}\label{S:group-coalgebras}

In this section, we recall the definition of Hopf group-coalgebras, as introduced by Turaev \cite{T10}. Let $G$ be an abelian group. Following \cite{V00,GHP20}, a \textit{Hopf $G$-coalgebra}\index{group coalgebra} is a family $H = \{ H_\alpha \mid \alpha \in G \}$ of vector spaces over a field $\Bbbk$ equipped with:
\begin{itemize}
 \item a family $\mu = \{ \mu_\alpha : H_\alpha \otimes H_\alpha \to H_\alpha \mid \alpha \in G \}$ of \textit{products};
 \item a family $\eta = \{ \eta_\alpha : \Bbbk \to H_\alpha \mid \alpha \in G \}$ of \textit{units};
 \item a family $\Delta = \{ \Delta_{\alpha,\beta} : H_{\alpha+\beta} \to H_\alpha \otimes H_\beta \mid \alpha,\beta \in G \}$ of \textit{coproducts};
 \item a \textit{counit} $\varepsilon_0 : H_0 \to \Bbbk$;
 \item a family $S = \{ S_\alpha : H_\alpha \to H_{-\alpha} \mid \alpha \in G \}$ of \textit{antipodes}.
\end{itemize}
These data satisfy:
\begin{enumerate}
 \item $\mu_\alpha \circ (\mu_\alpha \otimes \id_{H_\alpha}) = \mu_\alpha \circ (\id_{H_\alpha} \otimes \mu_\alpha)$ for every $\alpha \in G$;
 \item $\mu_\alpha \circ (\eta_\alpha \otimes \id_{H_\alpha}) = \id_{H_\alpha} = \mu_\alpha \circ (\id_{H_\alpha} \otimes \eta_\alpha)$ for every $\alpha \in G$;
 \item $(\Delta_{\alpha,\beta} \otimes \id_{H_{\gamma}}) \circ \Delta_{\alpha+\beta,\gamma} = (\id_{H_\alpha} \otimes \Delta_{\beta,\gamma}) \circ \Delta_{\alpha,\beta+\gamma}$ for all $\alpha,\beta,\gamma \in G$;
 \item $(\varepsilon_0 \otimes \id_{H_\alpha}) \circ \Delta_{0,\alpha} = \id_{H_\alpha} = (\id_{H_\alpha} \otimes \varepsilon_0) \circ \Delta_{\alpha,0}$ for every $\alpha \in G$;
 \item $\Delta_{\alpha,\beta} \circ \mu_{\alpha+\beta} = (\mu_\alpha \otimes \mu_\beta) \circ (\id_{H_\alpha} \otimes \tau_{H_\beta,H_\alpha} \otimes \id_{H_\beta}) \circ (\Delta_{\alpha,\beta} \otimes \Delta_{\alpha,\beta})$ for all $\alpha,\beta \in G$, where $\tau_{H_\beta,H_\alpha} : H_\beta \otimes H_\alpha \to H_\alpha \otimes H_\beta$ is the standard transposition;
 \item $\varepsilon_0 \circ \mu_0 = \varepsilon_0 \otimes \varepsilon_0$;
 \item $\Delta_{\alpha,\beta} \circ \eta_{\alpha+\beta} = \eta_\alpha \otimes \eta_\beta$ for all $\alpha,\beta \in G$;
 \item $\varepsilon_0 \circ \eta_0 = \id_\Bbbk$;
 \item $\mu_\alpha \circ (S_{-\alpha} \otimes \id_{H_\alpha}) \circ \Delta_{-\alpha,\alpha} = \eta_\alpha \circ \varepsilon_0 = \mu_\alpha \circ (\id_{H_\alpha} \otimes S_{-\alpha}) \circ \Delta_{\alpha,-\alpha}$ for every $\alpha \in G$.
\end{enumerate}
We will often use the shorthand notation
\[
 \mu_\alpha(x \otimes y) = xy, \qquad 
 \eta_\alpha(1) = 1_\alpha, \qquad
 \Delta_{\alpha,\beta}(x) = x_{(1,\alpha)} \otimes x_{(2,\beta)}.
\]
Remark that $H_\alpha$ is a unital associative algebra for every $\alpha \in G$, and that $H_0$ is a Hopf algebra.\index{Hopf algebra}

A \textit{ribbon Hopf $G$-coalgebra} is a Hopf $G$-coalgebra $H = \{ H_\alpha \mid \alpha \in G \}$ equipped with:
\begin{itemize}
 \item a family $R = \{ R_{\alpha,\beta} = R'_\alpha \otimes R''_\beta \in H_\alpha \otimes H_\beta \mid \alpha,\beta \in G \}$ of \textit{R-matrices};
 \item a family $v_+ = \{ v_\alpha \in H_\alpha \mid \alpha \in G \}$ of central invertible \textit{ribbon elements}.
 \end{itemize}
These data satisfy:
\begin{enumerate}
 \item $R'_\alpha x_{(1,\alpha)} \otimes R''_\beta x_{(2,\beta)} = x_{(2,\alpha)} R'_\alpha \otimes x_{(1,\beta)} R''_\beta$ for all $\alpha,\beta \in G$ and $x \in H_{\alpha+\beta}$;
 \item $R'_\alpha \otimes \Delta_{\beta,\gamma}(R''_{\beta+\gamma}) = (R'_\alpha \otimes 1_\beta \otimes R''_\gamma)(R'_\alpha \otimes R''_\beta \otimes 1_\gamma)$ for all 
 $\alpha,\beta,\gamma \in G$;
 \item $\Delta_{\alpha,\beta}(R'_{\alpha+\beta}) \otimes R''_\gamma = (R'_\alpha \otimes 1_\beta \otimes R''_\gamma)(1_\alpha \otimes R'_\beta \otimes R''_\gamma)$ for all
 $\alpha,\beta,\gamma \in G$;
 \item $v_\alpha^2 = u_\alpha S_{-\alpha}(u_{-\alpha})$ for all $\alpha \in G$, where
 $u_\alpha := S_{-\alpha}(R''_{-\alpha}) R'_\alpha$;
 \item $\Delta_{\alpha,\beta}(v_{\alpha+\beta}) = (v_\alpha \otimes v_\beta) (S_{-\alpha}(R'_{-\alpha}) \otimes R''_\beta)(R''_\alpha \otimes S_{-\beta}(R'_{-\beta}))$ for all $\alpha,\beta \in G$;
 \item $\varepsilon_0(v_0) = 1$;
 \item $S_\alpha(v_\alpha) = v_{-\alpha}$ for every $\alpha \in G$.
\end{enumerate}
Remark that $H_0$ is a ribbon Hopf algebra\index{Hopf algebra!ribbon}, and that the family 
\[
 u = \{ u_\alpha \in H_\alpha \mid \alpha \in G \}
\]
of \textit{Drinfeld elements} determines a family 
\[
 g = \{ g_\alpha \in H_\alpha \mid \alpha \in G \}
\]
of \textit{pivotal elements} 
\[
 g_\alpha := u_\alpha v_\alpha^{-1}.
\]

A \textit{unimodular Hopf $G$-coalgebra} is a Hopf $G$-coalgebra $H = \{ H_\alpha \mid \alpha \in G \}$ equipped with:
\begin{itemize}
 \item a family $\lambda = \{ \lambda_\alpha : H_\alpha \to \Bbbk \mid \alpha \in G \}$ of \textit{left integrals};
 \item a \textit{two-sided cointegral} $\Lambda_0 \in H_0$.
\end{itemize}
These data satisfy:
\begin{enumerate}
 \item $\lambda_\beta(x_{(2,\beta)}) x_{(1,\alpha)} = \lambda_{\alpha+\beta}(x) 1_\alpha$ for all $\alpha,\beta \in G$ and $x \in H_{\alpha+\beta}$;
 \item $x \Lambda_0 = \varepsilon_0(x) \Lambda_0 = \Lambda_0 x$ for every $x \in H_0$.
\end{enumerate}
Remark that $H_0$ is a unimodular Hopf algebra\index{Hopf algebra!unimodular}.

If $A$ is an algebra, recall that a \textit{central idempotent} of $A$ is an element $z \in A$ satisfying
\begin{align*}
 zx &= xz, &
 z^2 &= z
\end{align*}
for every $x \in A$. If $G$ is a finite abelian group, and $H$ is a Hopf algebra, then a family of central idempotents $\{ 1_\alpha \in H \mid \alpha \in G \}$ is called a \textit{$G$-splitting system} if it satisfies
\begin{align*}
 1_\alpha 1_\beta &= \delta_{\alpha,\beta} 1_\alpha, &
 1 &= \sum_{\alpha \in G} 1_\alpha, \\*
 \Delta(1_\alpha) &= \sum_{\beta \in G} 1_{\alpha-\beta} \otimes 1_\beta, &
 \varepsilon(1_\alpha) &= \delta_{\alpha,0}, \\*
 S(1_\alpha) &= 1_{-\alpha}.
\end{align*}
If $\{ 1_\alpha \in H \mid \alpha \in G \}$ is a $G$-splitting system in $H$ for a finite abelian group $G$, then $\{ H 1_\alpha \mid \alpha \in G \}$ is a Hopf $G$-coalgebra, with product
\[
 \mu_\alpha(x 1_\alpha \otimes y 1_\alpha) = xy 1_\alpha
\]
for every $\alpha \in G$, coproduct
\[
 \Delta_{\alpha,\beta}(x 1_{\alpha+\beta}) = x_{(1)} 1_\alpha \otimes x_{(2)} 1_\beta
\]
for all $\alpha,\beta \in G$, with counit
\[
 \varepsilon_0(x 1_0) = \varepsilon(x),
\]
and with antipode
\[
 S_\alpha(x 1_\alpha) = S(x) 1_{-\alpha}
\]
for every $\alpha \in G$. If $H$ is a ribbon Hopf algebra, then $\{ H 1_\alpha \mid \alpha \in G \}$ is a ribbon Hopf $G$-coalgebra, with R-matrix
\[
 R_{\alpha,\beta} = R(1_\alpha \otimes 1_\beta)
\]
for all $\alpha,\beta \in G$, and ribbon element
\[
 v_\alpha = v 1_\alpha
\]
for every $\alpha \in G$. If $H$ is a unimodular Hopf algebra, then $\{ H 1_\alpha \mid \alpha \in G \}$ is a unimodular Hopf $G$-coalgebra, with left integral
\[
 \lambda_\alpha(x 1_\alpha) = \lambda(x 1_\alpha)
\]
for every $\alpha \in G$, and two-sided cointegral
\[
 \Lambda_0 = \Lambda.
\]
Vice versa, if $\{ H_\alpha \mid \alpha \in G \}$ is a Hopf $G$-coalgebra for a finite abelian group $G$, then its \textit{direct sum}
\[
 H := \bigoplus_{\alpha \in G} H_\alpha
\]
is a Hopf algebra, with product
\[
 \mu \left( \bigoplus_{\alpha,\beta \in G} x_\alpha \otimes y_\beta \right) = \bigoplus_{\alpha \in G} x_\alpha y_\alpha,
\]
with unit
\[
 1 = \bigoplus_{\alpha \in G} 1_\alpha,
\]
with coproduct
\[
 \Delta \left( \bigoplus_{\alpha \in G} x_\alpha \right) = \bigoplus_{\alpha,\beta \in G} (x_{\alpha+\beta})_{(1,\alpha)} \otimes (x_{\alpha+\beta})_{(2,\beta)},
\]
with counit
\[
 \varepsilon \left( \bigoplus_{\alpha \in G} x_\alpha \right) = \varepsilon_0(x_0),
\]
and with antipode
\[
 S \left( \bigoplus_{\alpha \in G} x_\alpha \right) = \bigoplus_{\alpha \in G} S_{-\alpha}(x_{-\alpha}).
\]
If $\{ H_\alpha \mid \alpha \in G \}$ is a ribbon Hopf $G$-coalgebra, then $H$ is a ribbon Hopf algebra, with R-matrix
\[
 R = \bigoplus_{\alpha, \beta \in G} R_{\alpha,\beta},
\]
and ribbon element
\[
 v = \bigoplus_{\alpha \in G} v_\alpha.
\]
If $\{ H_\alpha \mid \alpha \in G \}$ is a unimodular Hopf $G$-coalgebra, then $H$ is a unimodular Hopf algebra, with left integral
\[
 \lambda \left( \bigoplus_{\alpha \in G} x_\alpha \right) = \lambda_\alpha(x_\alpha),
\]
and two-sided cointegral
\[
 \Lambda = \Lambda_0.
\]

For a finite abelian group $G$, we say a ribbon Hopf $G$-coalgebra $\{ H_\alpha \mid \alpha \in G \}$ is \textit{factorizable} if the direct sum 
\[
 H = \bigoplus_{\alpha \in G} H_\alpha
\]
is a factorizable ribbon Hopf algebra\index{Hopf algebra!factorizable}. By definition, this means that the \textit{Drinfeld map}
\begin{align*}
 D : H^* &\to H \\*
 f &\mapsto f(M'_+)M''_+
\end{align*}
is a linear isomorphism, where
\[
 M_+ = M'_+ \otimes M''_+ = (R'' \otimes R')(R' \otimes R'') \in H \otimes H
\]
is the \textit{M-matrix} associated with the R-matrix $R = R' \otimes R'' \in H \otimes H$.

\section{Refined invariant}

In this section, we fix an abelian group $G$ and a unimodular ribbon Hopf $G$-coalgebra $H$, and we define a refined invariant of \dmnsnl{4} \hndlbds{2} equipped with relative cohomology classes with coefficients in $G$. Let
\[
 \varnothing \subset W^0 \subset W^1 \subset W^2 = W
\]
be a \dmnsnl{4} \hndlbd{2} featuring a single $0$-han\-dle and a finite number of handles of index $1$ and $2$. By reversing the handle presentation, $W$ can be obtained from $\partial W \times I$ by attaching a finite number of handles of index $2$ and $3$, followed by a single $4$-han\-dle, which means
\[
 \partial W \times I \subset \tilde{W}^2 \subset \tilde{W}^3 \subset \tilde{W}^4 = W.
\]
Now let $\omega \in H^2(W,\partial W;G)$ be a cohomology class. Since $H_1(W,\partial W) = 0$, we have
\begin{align*}
 H^2(W,\partial W;G) 
 &\cong \Hom(H_2(W,\partial W),G) \oplus \Ext(H_1(W,\partial W),G) \\*
 &\cong \Hom(H_2(W,\partial W),G),
\end{align*}
thanks to the universal coefficient theorem\footnote{Whenever coefficients are omitted, they are assumed to be in $\Z$.}. Notice that the space $C_k(W,\partial W)$ of relative cellular $k$-chains of $(W,\partial W)$ is generated by the set of cocores $\{ 0 \} \times D^k$ of $(4-k)$-han\-dles $D^{4-k} \times D^k$. If $D$ is a Kirby diagram of $W$, and if we fix an arbitrary ordering of all components of $D$, then, up to isotopy, the cocore of the $i$th $1$-han\-dle appears in the Kirby diagram of $W$ as the \dmnsnl{2} Seifert disc $d_i$ of the $i$th dotted component (the complementary hemisphere of $d_i$ in the boundary of the cocore lies in $\partial W$, and has therefore been carved under the diagram, while the interior of the cocore lies in the interior of $W$). Similarly, if we fix an arbitrary orientation for every undotted component of $D$, then, up to isotopy, the cocore of the $j$th $2$-han\-dle appears in the Kirby diagram of $W$ as the \dmnsnl{2} Seifert disc $m_j$ of a positive meridian of the $j$th undotted component. The differential $\partial : C_3(W,\partial W) \to C_2(W,\partial W)$ is defined as $\partial(d_i) = m_{j_1} + \ldots + m_{j_k}$, where $d_i$ and $m_{j_1}, \ldots, m_{j_k}$ are related by the following picture.
\[
 \pic{differential}
\]
In other words, in the specified bases, $\partial$ is represented by the linking matrix between dotted and undotted components. If the orientation of one of the vertical strands in the above configuration is reversed, the sign of the contribution of the corresponding meridian to the sum should also be reversed. Then, $H_2(W,\partial W)$ is by definition equal to $\coker(\partial)$, and $H^2(W,\partial W;G)$ can be identified with the subgroup of $\Hom(C_2(W,\partial W),G)$ composed of those linear maps that vanish on $\im(\partial)$.

A \textit{$G$-Kirby diagram} of $(W,\omega)$ is obtained from a Kirby diagram of $W$ by orienting every undotted component and labeling it by $\omega(m) \in G$, where $m \in C_2(W,\partial W)$ denotes the \dmnsnl{2} Seifert disc of a positive meridian of the undotted component. Two $G$-Kirby diagrams represent the same pair $(W,\omega)$ if and only if they are related by a finite sequence of \textit{$G$-Kirby moves}:
\begin{align*}
 \pic{G-Kirby_moves_1a} &\leftrightsquigarrow \pic{G-Kirby_moves_1b} \tag{GK1}\label{E:GK1} \\*
 \pic{G-Kirby_moves_2a} &\leftrightsquigarrow \pic{G-Kirby_moves_2b} \tag{GK2}\label{E:GK2} \\*
 \pic{G-Kirby_moves_3a} &\leftrightsquigarrow \pic{G-Kirby_moves_3b} \tag{GK3}\label{E:GK3}
\end{align*}
Pictures~\eqref{E:GK1}--\eqref{E:GK3} represent operations performed inside \dmnsnl{3} \hndlbds{1} embedded into $S^3$, and they leave $G$-Kirby diagrams unchanged in the complement.

In order to define an invariant of the pair $(X,\omega)$, let us consider a $G$-Kirby diagram $D$ representing $(X,\omega)$. First of all, we insert beads labeled by components of the R-matrix around crossings as shown:
\begin{align*}
 \pic{algorithm_01} &\mapsto \pic{algorithm_02} &
 \pic{algorithm_03} &\mapsto \pic{algorithm_04}
\end{align*}
If a crossing is obtained from the ones above by reversing the orientation of a strand, the label of the corresponding bead is evaluated against the antipode. For instance, if the orientation of the strand labeled by $\alpha$ in the left-most crossing above is reversed, the corresponding bead changes from $R'_\alpha$ to $S_{-\alpha}(R'_{-\alpha})$. Next, we insert beads labeled by the pivotal element around right-oriented extrema as shown:
\begin{align*}
 \pic{algorithm_05} &\mapsto \pic{algorithm_06} &
 \pic{algorithm_07} &\mapsto \pic{algorithm_08}
\end{align*}
This operation leaves left-oriented extrema untouched. Then, we remove dotted components, while also inserting beads labeled by coproducts of the cointegral as shown:
\begin{align*}
 \pic{algorithm_09} &\mapsto \pic{algorithm_10}
\end{align*}
If the orientation of one of the strands piercing a Seifert disc for the dotted unknot is reversed, the label of the corresponding bead is evaluated against the antipode. For instance, if the orientation of the strand labeled by $\alpha_i$ above is reversed, the corresponding bead changes from $(\Lambda_0)_{(i,\alpha_i)}$ to $S_{-\alpha_i}((\Lambda_0)_{i,-\alpha_i})$. When $k = 0$, removing a dotted component costs a multiplicative factor of $\varepsilon_0(\Lambda_0)$ in front of $D$. Next, we collect all beads sitting on the same component in one place, and we multiply everything together according to the rule
\begin{align*}
 \pic{algorithm_11} &= \pic{algorithm_12}
\end{align*}
In the end, we are left with a decorated diagram of the form
\begin{equation}
 B(D) = \pic{algorithm_13} \label{E:bead_presentation}
\end{equation}
We say a label 
\[
 x_1 \otimes \ldots \otimes x_k \in H_{\alpha_1} \otimes \ldots \otimes H_{\alpha_k}
\]
obtained this way is a \textit{bead presentation} of $D$.

\begin{theorem}\label{T:main}
 If $W$ is a \dmnsnl{4} \hndlbd{2} equipped with a relative cohomology class $\omega \in H^2(W,\partial W;G)$, if $D$ is a $G$-Kirby diagram representing $(W,\omega)$, and if $x_1 \otimes \ldots \otimes x_k \in H_{\alpha_1} \otimes \ldots \otimes H_{\alpha_k}$ is a bead presentation of $D$, then the scalar
 \[
  J_H(W,\omega) = \prod_{i=1}^k \lambda_{\alpha_i}(x_i g_{\alpha_i}^{-1})
 \]
 is a topological invariant of the pair $(W,\omega)$.
\end{theorem}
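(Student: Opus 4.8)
The plan is to prove two things: that the scalar $\prod_{i=1}^{k}\lambda_{\alpha_i}(x_i g_{\alpha_i}^{-1})$ is independent of the choices made while running the bead algorithm on a \emph{fixed} $G$-Kirby diagram $D$, and that it is invariant under the $G$-Kirby moves \eqref{E:GK1}--\eqref{E:GK3} relating different $G$-Kirby diagrams of the same pair $(W,\omega)$. The architecture is identical to the well-definedness proof of the non-refined invariant in \cite{BD21,BM02}: each Hopf-algebraic identity used there (Yang--Baxter, the quasi-triangularity and ribbon relations, the integral and cointegral properties, the pivotal identity $S^2(x)=g\,x\,g^{-1}$) has a graded analogue among the axioms of Section~\ref{S:group-coalgebras}, and the task is to verify that these graded identities transport the $G$-degrees coherently through the algorithm.

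For the first point, the ambiguities in producing a bead presentation of a fixed $D$ are the point on each closed component where its beads are multiplied together, the order in which beads on a component are concatenated, and the order in which the strands through the Seifert disc of a dotted component are paired with the factors of the iterated coproduct of $\Lambda_0$. The last is controlled by coassociativity of the coproduct; since $\Lambda_0 \in H_0$, the iterated coproduct $\Delta^{(k)}(\Lambda_0)$ lands in $H_{\alpha_1}\otimes\dots\otimes H_{\alpha_k}$ only when $\alpha_1+\dots+\alpha_k=0$ (with signs dictated by orientations), which is exactly the requirement that $\omega$ annihilate $\im(\partial)$ and hence define a class in $H^2(W,\partial W;G)$. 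The remaining ambiguities are resolved by the fact that, for each $\alpha$, the linear form $x \mapsto \lambda_\alpha(x\,g_\alpha^{-1})$ is symmetric, $\lambda_\alpha(xy\,g_\alpha^{-1}) = \lambda_\alpha(yx\,g_\alpha^{-1})$ for $x,y\in H_\alpha$; this is the graded form of the cyclicity of the integral twisted by the inverse pivotal element, and it makes the product of beads along each component well defined up to cyclic rotation. The $G$-degrees cause no trouble here, since products are graded and the beads deposited on the $i$th component — $R$-matrix factors, pivotal elements, cointegral factors — all lie in the single algebra $H_{\alpha_i}$.

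For the second point, the moves in \eqref{E:GK1}--\eqref{E:GK3} split into three types. The Reidemeister-type reconfigurations of crossings are handled by the graded Yang--Baxter and quasi-triangularity relations ($R'_\alpha x_{(1,\alpha)} \otimes R''_\beta x_{(2,\beta)} = x_{(2,\alpha)}R'_\alpha \otimes x_{(1,\beta)}R''_\beta$ and the two compatibility axioms between $R$-matrix and coproduct) together with the ribbon relations for $v_\alpha$. The moves localized at a dotted component use the cointegral property $x\Lambda_0 = \varepsilon_0(x)\Lambda_0$ and, once more, the placement of $\Lambda_0$ in degree $0$. The delicate move is the handle slide, in which one undotted component is pushed across another: the two labels combine, the meridian relations — and hence the prescribed value of $\omega$ — are modified, and one must show the scalar is unchanged. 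The essential tool here is the graded partial-integral identity $\lambda_\beta(x_{(2,\beta)})\,x_{(1,\alpha)} = \lambda_{\alpha+\beta}(x)\,1_\alpha$, used in tandem with the coproduct compatibility of the $R$-matrix and of the ribbon element to absorb the beads created by the slide and to confirm that the induced relabeling is exactly the change of cohomology class demanded by the move.

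I expect this last computation to be the \emph{main obstacle}: reconciling, uniformly in $\alpha,\beta \in G$, the bead rearrangement produced by a handle slide with the grading shift of $\omega$, so that the partial-integral identity precisely cancels the discrepancy. As a guiding principle and an independent check when $G$ is finite, the direct sum $\hat H = \bigoplus_{\alpha\in G}H_\alpha$ is a unimodular ribbon Hopf algebra (Section~\ref{S:group-coalgebras}), so its non-refined invariant $J_{\hat H}(W)$ is defined by \cite{BD21}; expanding its bead algorithm by $G$-degree, and using that the product of $\hat H$ is diagonal (forcing each component to carry one degree) while $\Lambda_0$ sits in degree $0$ (killing every coloring that is not a relative cocycle), yields $J_{\hat H}(W) = \sum_{\omega\in H^2(W,\partial W;G)} J_H(W,\omega)$. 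This both explains why $H^2(W,\partial W;G)$ is the natural index set and corroborates the move-by-move invariance. The direct verification of \eqref{E:GK1}--\eqref{E:GK3} is nonetheless what proves the theorem for an arbitrary abelian group, since for infinite $G$ the unit $\bigoplus_\alpha 1_\alpha$ fails to lie in $\hat H$ and this shortcut is unavailable.
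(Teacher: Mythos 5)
Your overall architecture---verifying invariance move by move using graded Hopf-algebraic identities---is the same as the paper's, and for two of the three moves you name exactly the identities the paper uses: the graded partial-integral identity $\lambda_\beta(x_{(2,\beta)})\,x_{(1,\alpha)} = \lambda_{\alpha+\beta}(x)\,1_\alpha$ for the handle slide \eqref{E:GK1} (the paper applies its twist by the pivotal element, which follows because $g$ is group-like), and the cointegral identity $\lambda_0(\Lambda_0 x) = \varepsilon_0(x)$ together with the counit properties of the $R$-matrix for the dotted-component move \eqref{E:GK2}; your direct-sum consistency check for finite $G$ is likewise the same observation the paper makes in proving Proposition~\ref{P:general_decomposition}.

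There is, however, a genuine gap: you never address the move \eqref{E:GK3}, which reverses the orientation of an undotted component and replaces its label $\alpha$ by $-\alpha$. Since orientations are auxiliary choices made in passing from $(W,\omega)$ to a $G$-Kirby diagram, invariance under this move is indispensable for the scalar to be an invariant of the pair, and it is precisely the place where a genuinely cross-degree statement about integrals enters. The paper disposes of it via
\[
 \lambda_{-\alpha}(S_\alpha(x) g_{-\alpha}^{-1})
 = \lambda_{-\alpha}(S_\alpha(g_\alpha x))
 = \lambda_\alpha(g_\alpha^{-1} x)
 = \lambda_\alpha(x g_\alpha^{-1}),
\]
which rests on \cite[Theorem~4.2]{V00}, namely $\lambda_{-\alpha}(S_\alpha(x)) = \lambda_\alpha(g_\alpha^{-2}x)$ and $\lambda_\alpha(xy) = \lambda_\alpha(y S_{-\alpha}(S_\alpha(x)))$. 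Neither of these is among the axioms of Section~\ref{S:group-coalgebras}, and neither follows from the pivotal identity $S_{-\alpha}(S_\alpha(x)) = g_\alpha x g_\alpha^{-1}$ that you invoke: one must know how the family $\{\lambda_\alpha\}$ transforms under the antipodes $S_\alpha : H_\alpha \to H_{-\alpha}$, i.e.\ a relation between integrals in \emph{different} degrees, which is a nontrivial theorem about Hopf $G$-coalgebras rather than a formal consequence of the degree-preserving structure. (The same theorem is what justifies the twisted cyclicity $\lambda_\alpha(xy\,g_\alpha^{-1}) = \lambda_\alpha(yx\,g_\alpha^{-1})$ that you assert for well-definedness on a fixed diagram---that assertion is correct, but it should be attributed to this result rather than treated as immediate.) Your plan becomes complete once \eqref{E:GK3} is added to your list of moves and handled by the identity above.
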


\begin{proof}
 We need to check that $J_H(W,\omega)$ is invariant under $G$-Kirby moves. For what concerns move \eqref{E:GK1}, we have
 \[
  \pic{proof_5} \rightsquigarrow \pic{proof_6}
 \]
 Then the claim follows from
 \begin{align*}
  \lambda_\beta(x_{(2,\beta)} g_\beta^{-1}) x_{(1,\alpha)} g_\alpha^{-1}
  &= \lambda_{\alpha+\beta}(x g_{\alpha+\beta}^{-1}) 1_\alpha.
 \end{align*}
 Similarly, for what concerns move \eqref{E:GK2}, we have
 \[
  \pic{proof_7} \rightsquigarrow \pic{proof_8}
 \]
 Then the claim follows from
 \begin{align*}
  \lambda_0(\Lambda_0 x) &= \varepsilon_0(x),
 \end{align*}
 together with the fact that
 \begin{align*}
  \varepsilon_0(R'_0) R''_\alpha &= 1_\alpha = \varepsilon_0(R''_0) R'_\alpha, &
  \varepsilon_0(g_0) &= 1, &
  \varepsilon(x_{(1,0)}) x_{(2,\alpha)} &= x = \varepsilon_0(x_{(2,0)}) x_{(1,\alpha)}.
 \end{align*}
 Finally, for what concerns move \eqref{E:GK3}, we have
 \[
  \pic{proof_9} \rightsquigarrow \pic{proof_10}
 \]
 In order to see this, it is useful to represent the component as the closure of a braid. Then the claim follows from
 \begin{align*}
  \lambda_{-\alpha}(S_\alpha(x) g_{-\alpha}^{-1}) 
  &= \lambda_{-\alpha}(S_\alpha(g_\alpha x))
  = \lambda_\alpha(g_\alpha^{-1} x)
  = \lambda_\alpha(x g_\alpha^{-1}),
 \end{align*}
 where the second and third equalities are a consequence of \cite[Theorem~4.2]{V00}, which implies that
 \begin{align*}
  \lambda_{-\alpha}(S_\alpha(x)) &= \lambda_\alpha(g_\alpha^{-2} x), &
  \lambda_\alpha(xy) &= \lambda_\alpha(yS_{-\alpha}(S_\alpha(x)))
 \end{align*}
 for all $x,y \in H_\alpha$. Indeed, the distinguished group-like element is $g_\alpha^2 \in H_\alpha$ in our notation, and we are considering a left integral instead of a right integral.
\end{proof}

\begin{remark}
 When $G = \Z/2\Z$, then $\alpha = -\alpha$ for all $\alpha \in G$, and the orientations can be dropped. In this case, the algorithm given above, which follows the oriented approach of Hennings \cite{H96}, has to be replaced with the equivalent algorithm adapted from the unoriented approach of Kauffman and Radford \cite{KR94}, which is the one followed in \cite[Section~8.1]{BD21}.
\end{remark}

\section{General decomposition formulas}

We say a \dmnsnl{4} \hndlbd{2} is \textit{geometrically simply connected} if it admits a handle decomposition without $1$-han\-dles and $k$-han\-dles for $k > 2$. We will show that, if $G$ is a finite abelian group, and $H$ is a factorizable ribbon Hopf $G$-coalgebra, then both the refined and the non-refined invariants can be computed as sums of invariants of geometrically simply connected \dmnsnl{4} \hndlbds{2}. In other words, we can always trade $1$-han\-dles for $2$-han\-dles. In order to do this, let us consider a connected \dmnsnl{4} \hndlbd{2} $W$ with boundary $\partial W = M$, let $D$ be a Kirby diagram for $W$, let $L$ be the framed link obtained from $D$ by trading $1$-han\-dles for $2$-han\-dles (that is, by erasing dots), and let $E$ be the geometrically simply connected \dmnsnl{4} \hndlbd{2} represented by $L$. Notice that 
\[
 \partial E \cong \partial W = M,
\]
that $L$ provides a surgery presentation for $M$. We stress the fact that both $L$ and $E$ depend crucially on $D$, and that a different choice of diagram representing $W$ would yield different results.

Notice that we have a natural inclusion $\iota : C_2(W,M) \hookrightarrow C_2(E,M)$. Furthermore, as we explained before, relative cohomology classes in $H^2(W,M)$ can be identified with linear maps in $\Hom(C_2(W,M),G)$ vanishing of the image of the differential $\partial : C_3(W,M) \to C_2(W,M)$, and similarly relative cohomology classes in $H^2(E,M)$ can be identified with linear maps in $\Hom(C_2(E,M),G)$, since $C_3(E,M) = 0$. Then, let us set 
\begin{align*}
 \calE(E) &:= H^2(E,M;G), \\*
 \calE(E,\omega) &:= \{ \psi \in \calE(E) \mid \psi \circ \iota = \omega \}.
\end{align*}

\begin{proposition}\label{P:general_decomposition}
 If $G$ is a finite abelian group, and $H$ is a factorizable ribbon Hopf $G$-coalgebra, let $W$ be a connected \dmnsnl{4} \hndlbd{2} with boundary $\partial W = M$, let $D$ be a Kirby diagram for $W$, let $L$ be the framed link obtained from $D$ by trading $1$-han\-dles for $2$-han\-dles, and let $E$ be the \dmnsnl{4} \hndlbd{2} represented by $L$. Then, for every relative cohomology class $\omega \in H^2(W,M;G)$ we have
 \begin{align}
  J_H(W,\omega) &= \sum_{\psi \in \calE(E,\omega)} J_H(E,\psi), \label{E:general_decomposition_omega}
 \end{align}
 and for the non-refined invariant we have
 \begin{align}
  J_H(W) &= \sum_{\psi \in \calE(E)} J_H(E,\psi). \label{E:general_decomposition}
 \end{align}
\end{proposition}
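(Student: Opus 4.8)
The strategy is to read off both identities from a single local computation, comparing the bead presentations of the diagrams $D$ (for $W$) and $L$ (for $E$) directly. Since $L$ is obtained from $D$ by erasing dots, the two decorated diagrams differ only in how each former $1$-handle and the strands piercing its Seifert disc are decorated, so it suffices to understand what the algorithm does at one $1$-handle when its dot is erased and the degree of the resulting undotted component is allowed to vary over $G$. Fix a dotted component $c$ of $D$ whose Seifert disc is pierced by $k$ strands of degrees $\beta_1,\dots,\beta_k\in G$. In the bead presentation of $D$ the algorithm deletes $c$ and inserts the iterated coproduct $(\Lambda_0)_{(1,\beta_1)}\otimes\cdots\otimes(\Lambda_0)_{(k,\beta_k)}$ of the cointegral onto these strands; by the grading of $\Delta$ this tensor lies in $H_{\beta_1}\otimes\cdots\otimes H_{\beta_k}$ and is zero unless $\beta_1+\cdots+\beta_k=0$. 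In $L$ the same component is undotted; after fixing its degree $\gamma\in G$ it contributes a scalar $\lambda_\gamma(\chi_\gamma g_\gamma^{-1})$ together with R-matrix beads induced on the $k$ strands, and we write $T_\gamma\in H_{\beta_1}\otimes\cdots\otimes H_{\beta_k}$ for the resulting tensor on the strands.

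Everything then hinges on the local identity
\[
 \sum_{\gamma\in G}T_\gamma=(\Lambda_0)_{(1,\beta_1)}\otimes\cdots\otimes(\Lambda_0)_{(k,\beta_k)}.
\]
To prove it I would pass to the direct-sum Hopf algebra $H=\bigoplus_{\alpha\in G}H_\alpha$, which is factorizable and unimodular by hypothesis. There $T:=\sum_{\gamma}T_\gamma$ is exactly the tensor produced by encircling the $k$ strands with a $0$-framed unknot, decorating it by R-matrices, and closing it off with the pivotally normalized integral $x\mapsto\lambda(xg^{-1})$, where $\lambda=\bigoplus_\alpha\lambda_\alpha$ and $g=\bigoplus_\alpha g_\alpha$. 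For a single strand the two crossings of the meridian contribute the M-matrix $M_+=M'_+\otimes M''_+$, so the closure leaves on the strand the Drinfeld image $D(\lambda(-\,g^{-1}))=\lambda(M'_+g^{-1})\,M''_+$ of that integral. Factorizability makes the Drinfeld map $D$ a linear isomorphism; since $D$ carries an integral functional to a two-sided integral element, $D(\lambda(-\,g^{-1}))$ is a nonzero multiple of the cointegral $\Lambda_0$, and the ribbon and pivotal normalizations of the bead rule are precisely those making this multiple equal to $1$. For general $k$ the meridian links all $k$ strands, so the distributed element is the iterated coproduct $\Delta^{(k-1)}(\Lambda_0)$, matching the cointegral coproduct of the dotted picture. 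Reading the identity off grade by grade---the degree-$\gamma$ part of the encircling tensor is $T_\gamma$, while $\Lambda_0\in H_0$ feeds its coproduct only into $\bigoplus_{\beta_1+\cdots+\beta_k=0}H_{\beta_1}\otimes\cdots\otimes H_{\beta_k}$---gives the local identity, including the vanishing when $\beta_1+\cdots+\beta_k\neq0$.

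Granting the local identity, formula \eqref{E:general_decomposition_omega} follows by fixing $\omega\in H^2(W,M;G)$ and summing $J_H(E,\psi)$ over $\psi\in\calE(E,\omega)$. Because $E$ is geometrically simply connected we have $C_3(E,M)=0$, so these $\psi$ are exactly the extensions of $\omega$ obtained by assigning an arbitrary degree $\gamma_c\in G$ to each former $1$-handle $c$; thus the sum over $\calE(E,\omega)$ factors as a product of sums over the $\gamma_c$. Applying the local identity at each former $1$-handle turns $\sum_{\gamma_c}\lambda_{\gamma_c}(-\,g_{\gamma_c}^{-1})$ into the corresponding cointegral coproduct and converts the bead presentation of $L$ into that of $D$, whence $\sum_{\psi\in\calE(E,\omega)}J_H(E,\psi)=J_H(W,\omega)$; no term is lost because $\omega$ being a cohomology class means the strands through each former $1$-handle have degrees summing to $0$. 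For \eqref{E:general_decomposition} I would group $\sum_{\psi\in\calE(E)}J_H(E,\psi)$ according to the restriction $\omega=\psi\circ\iota\in\Hom(C_2(W,M),G)$. For each $\omega$ the inner sum equals $J_H(W,\omega)$ by the same computation, with the understanding that when $\omega$ does not vanish on $\im(\partial\colon C_3(W,M)\to C_2(W,M))$ the local identity forces the inner sum to vanish, in agreement with $J_H(W,\omega)=0$. Summing over the surviving classes and using that the non-refined invariant of the direct-sum Hopf algebra satisfies $J_H(W)=\sum_{\omega\in H^2(W,M;G)}J_H(W,\omega)$---here $\lambda=\bigoplus_\alpha\lambda_\alpha$ ranges over all degrees of the undotted components while the cointegral at the $1$-handles enforces the cohomology condition---yields \eqref{E:general_decomposition}.

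The one genuinely delicate point is the local identity, and within it the passage from \emph{$D$ is an isomorphism} to \emph{$D$ sends the pivotally normalized integral to $\Lambda_0$ on the nose}, carried out compatibly with the $G$-grading, with the pivotal beads $g_\gamma^{-1}$, and with the $0$-framing of the encircling unknot. In particular one must verify that the self-linking and ribbon contributions created by erasing a dot produce no spurious scalar---equivalently, that trading a $1$-handle for a $2$-handle this way introduces no signature correction---which is exactly what allows \eqref{E:general_decomposition_omega} and \eqref{E:general_decomposition} to hold with no prefactor. The remaining steps are grading bookkeeping and the elementary combinatorics of how $\calE(E,\omega)$ and $\calE(E)$ decompose over $G$.
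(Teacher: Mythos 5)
Your overall strategy is exactly the paper's: reduce both formulas to a local identity that trades one dotted component for a $G$-indexed sum of $0$-framed undotted components, prove that identity in the direct sum $H=\bigoplus_{\alpha\in G}H_\alpha$ using factorizability, and then do the combinatorics of $\calE(E,\omega)$ and $\calE(E)$. Your bookkeeping for \eqref{E:general_decomposition} (grouping $\psi$ by the restriction $\psi\circ\iota$ and observing that the terms where $\psi\circ\iota$ fails the cocycle condition vanish because the relevant graded component of $\Delta^{(k-1)}(\Lambda_0)$ is zero) is correct, and is in fact more explicit than the paper's one-line treatment of that equation.

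However, the step you yourself flag as delicate is not merely delicate: as written it fails. You assert that the meridian closes to the functional $\lambda(\,\cdot\,g^{-1})$ and that its Drinfeld image $D(\lambda(\,\cdot\,g^{-1}))=\lambda(M'_+g^{-1})M''_+$ is a nonzero multiple of $\Lambda_0$, to be fixed to $1$ by normalization. The identity the paper uses is $\lambda(S(M'_+))M''_+=\Lambda$, and since $\lambda\circ S=\lambda(g^{-2}\,\cdot\,)=\lambda(\,\cdot\,g^{-2})$ (by \cite{V00} and $\lambda(xy)=\lambda(yS^2(x))$), these two candidates cannot both have image proportional to $\Lambda_0$: if $D(\lambda(\,\cdot\,g^{-1}))=cD(\lambda(\,\cdot\,g^{-2}))$, then injectivity of $D$ (factorizability) gives $\lambda(\,\cdot\,g^{-1})=c\,\lambda(\,\cdot\,g^{-2})$, and non-degeneracy of the pairing $(x,y)\mapsto\lambda(xy)$ forces $g^{-1}=cg^{-2}$, i.e.\ $g=c\cdot 1$. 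For the motivating example $U_q\fsl_2$ one has $g=K^{p+1}$, which is not a scalar, so $D(\lambda(\,\cdot\,g^{-1}))$ is \emph{not} proportional to $\Lambda_0$ there, and no normalization can rescue your local identity; relatedly, $\lambda(\,\cdot\,g^{-1})$ is not an integral functional of the kind whose Drinfeld image is a cointegral (that role is played by the right integral $\lambda\circ S$). What your computation drops is precisely what produces the antipode: half of the meridian runs against the orientations in the crossing rules, so its R-matrix beads get hit by $S$, and the extremum pivot bead cancels the $g^{-1}$ of the closure, leaving
\[
 \sum_{\beta\in G}\lambda(S(M'_+1_\beta))\,(M''_+)_{(1)}1_{\alpha_1}\otimes\cdots\otimes(M''_+)_{(k)}1_{\alpha_k}
 =\Lambda_{(1)}1_{\alpha_1}\otimes\cdots\otimes\Lambda_{(k)}1_{\alpha_k},
\]
which is the chain of equalities in the paper's proof. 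With this corrected local identity, the rest of your argument goes through as in the paper.
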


\begin{proof}
 Let us start by proving Equation~\eqref{E:general_decomposition_omega}. To this end, let us show that every $1$-han\-dle can be traded for a $2$-han\-dle, provided we take into account all possible ways of extending $\omega$ to the newly created $2$-han\-dle. This amounts to showing that, if $\alpha_1,\ldots,\alpha_k \in G$ satisfy
 \[
  \sum_{j=1}^k \alpha_j = 0,
 \]
 then a local replacement in the $G$-Kirby diagram $D$ of the form
 \[
  \pic{handle_trading_1} \rightsquigarrow \sum_{\beta \in G} \pic{handle_trading_2}
 \]
 does not change the invariant (provided we extend it linearly to linear combinations of $G$-Kirby diagrams). In order to prove this, it is sufficient to notice that factorizability of $H$ implies
 \begin{align*}
  &\sum_{\beta \in G} \lambda_\beta(S(M'_+) 1_\beta) (M''_+)_{(1)} 1_{\alpha_1}  \otimes \ldots \otimes (M''_+)_{(k)} 1_{\alpha_k} \\*
  &\hspace*{\parindent} = \lambda(S(M'_+)) (M''_+)_{(1)} 1_{\alpha_1}  \otimes \ldots \otimes (M''_+)_{(k)} 1_{\alpha_k} \\*
  &\hspace*{\parindent} = \Lambda_{(1)} 1_{\alpha_1}  \otimes \ldots \otimes \Lambda_{(k)} 1_{\alpha_k},
 \end{align*}
 where $M_+ = M'_+ \otimes M''_+  = (R'' \otimes R')(R' \otimes R'') \in H \otimes H$ denotes the M-matrix of $H$.
 
 In order to prove Equation~\eqref{E:general_decomposition}, it is sufficient to notice that
 \[
  J_H(W) = \sum_{\omega \in H^2(W,M;G)} J_H(W,\omega). \qedhere
 \]
\end{proof}

\begin{remark}
 Notice that, if $G$ is a finite abelian group and $H$ is a factorizable ribbon Hopf $G$-coalgebra, then
 \[
  \sum_{\alpha \in G} \lambda_\alpha(v_\alpha^{-1}) \lambda_{-\alpha}(v_{-\alpha}) = 1.
 \]
 Indeed, thanks to Proposition~\ref{P:general_decomposition}, the invariant of
 \[
  \pic{general_stabilization_1}
 \]
 coincides with the invariant of
 \[
  \sum_{\alpha \in G} \pic{general_stabilization_2} 
  = \sum_{\alpha \in G} \pic{general_stabilization_3}
 \]
 This means that, if $\lambda(v_\alpha^{-1})$ is an invertible scalar for a single $\alpha \in G$, then we can understand the invariant appearing on the right-hand side of Equations~\eqref{E:general_decomposition_omega} \& \eqref{E:general_decomposition} as the invariant of a \mnfld{3} equipped with a $G$-structure that assigns the degree $\alpha$ to stabilizations by $\partial \bbC P^2$ and the degree $-\alpha$ to stabilizations by $\partial \overline{\bbC P^2}$. This will be the case in Section~\ref{S:decompositions}, where we will decompose the refined and the non-refined invariants associated with the restricted quantum group $U$ in terms of \dmnsnl{3} invariants depending on spin and cohomological structures.
\end{remark}

\section{Restricted quantum \texorpdfstring{$\fsl_2$}{sl(2)}}

In this section, we recall the definition of our main examples of Hopf $\Z/2\Z$-coalgebras, the restricted quantum group of $\fsl_2$ and its ribbon extension. Let $q = e^{\frac{\pi i}{p}}$ be a root of unity of order $2p$ for an integer $p \geqs 2$. For every integer $n \geqs 0$ we recall the notation 
\begin{align*}
 \{ n \} &:= q^n - q^{-n}, &
 [n] &:= \frac{\{ n \}}{\{ 1 \}}, &
 [n]! &:= \prod_{k=1}^n [k].
\end{align*} 
The \textit{restricted quantum group\index{quantum sl(2)}\index{quantum sl(2)!restricted} $U = U_q \fsl_2$} is defined as the algebra over $\bbC$ with generators $\{ E,F,K \}$ and relations
\begin{align*}
 E^p &= 0, & F^p &= 0, &
 K^{2p} &= 1, \\
 K E K^{-1} &= q^2 E, & K F K^{-1} &= q^{-2} F, &
 [E,F] &= \frac{K - K^{-1}}{q-q^{-1}}.
\end{align*}
A Hopf algebra structure on $U$ is obtained by setting
\begin{align*}
 \Delta(E) &= E \otimes K + 1 \otimes E, & \varepsilon(E) &= 0, & S(E) &= -E K^{-1}, \\
 \Delta(F) &= F \otimes 1 + K^{-1} \otimes F, & \varepsilon(F) &= 0, & S(F) &= - K F \\
 \Delta(K) &= K \otimes K, & \varepsilon(K) &= 1, & S(K) &= K^{-1}.
\end{align*}

The restricted quantum group $U$ is not quasitriangular, because it does not admit an R-matrix. However, it admits a quasitriangular extension $\tilde{U} = \tilde{U}_q \fsl_2$, which is the algebra over $\bbC$ with generators $\{ \tilde{E},\tilde{F},\tilde{K} \}$ and relations
\begin{align*}
 \tilde{E}^p &= 0 & \tilde{F}^p &= 0, &
 \tilde{K}^{4p} &= 1, \\
 \tilde{K} \tilde{E} \tilde{K}^{-1} &= q \tilde{E}, &
 \tilde{K} \tilde{F} \tilde{K}^{-1} &= q^{-1} \tilde{F}, &
 [\tilde{E},\tilde{F}] &= \frac{\tilde{K}^2 - \tilde{K}^{-2}}{q-q^{-1}}.
\end{align*}
A Hopf algebra structure on $\tilde{U}$ is obtained by setting
\begin{align*}
 \Delta(\tilde{E}) &= \tilde{E} \otimes \tilde{K}^2 + 1 \otimes \tilde{E}, & \varepsilon(\tilde{E}) &= 0, & S(\tilde{E}) &= -\tilde{E} \tilde{K}^{-2}, \\
 \Delta(\tilde{F}) &= \tilde{F} \otimes 1 + \tilde{K}^{-2} \otimes \tilde{F}, & \varepsilon(\tilde{F}) &= 0, & S(\tilde{F}) &= - \tilde{K}^2 \tilde{F} \\
 \Delta(\tilde{K}) &= \tilde{K} \otimes \tilde{K}, & \varepsilon(\tilde{K}) &= 1, & S(\tilde{K}) &= \tilde{K}^{-1},
\end{align*}
and we identify $U$ with a Hopf subalgebra of $\tilde{U}$ by setting
\begin{align*}
 E &= \tilde{E}, &
 F &= \tilde{F}, &
 K &= \tilde{K}^2.
\end{align*}
The R-matrix $\tilde{R} \in \tilde{U} \otimes \tilde{U}$ is given by the product $\tilde{R} = \tilde{D} \Theta$, where the quasi-R-matrix $\Theta = \Theta' \otimes \Theta'' \in U \otimes U$ is given by
\begin{align}\label{E:quasi-R-matrix}
 \Theta &= \sum_{a=0}^{p-1} \frac{\{ 1 \}^a}{[a]!} q^{\frac{a(a-1)}{2}} E^a \otimes F^a,
\end{align}
while the diagonal Cartan part $\tilde{D} = \tilde{D}' \otimes \tilde{D}'' \in \tilde{U} \otimes \tilde{U}$ is given by
\begin{align}\label{E:diagonal-component}
 \tilde{D} &= \frac{1}{4p} \sum_{a,b=0}^{4p-1} t^{-ab} \tilde{K}^a \otimes \tilde{K}^b,
\end{align}
where $t := e^{\frac{\pi i}{2p}}$. Notice that it is only the diagonal Cartan part $\tilde{D}$ that cannot be defined in $U$, while the quasi-R-matrix $\Theta$ poses no problem.

Although $\tilde{R} \not\in U \otimes U$, the rest of the ribbon structure of $\tilde{U}$ is actually contained in $U$. Indeed, the pivotal element $g \in U$ can be defined as
\begin{equation}\label{E:pivotal_element}
 g = K^{p+1}.
\end{equation}
This means that the ribbon element and its inverse $v_+,v_- \in U$ are given by
\begin{align}
 v_+ &= \frac{1-i}{2\sqrt{p}} 
 \sum_{a=0}^{p-1} \sum_{b=0}^{2p-1} \frac{\{ -1 \}^a}{[a]!}
 q^{-\frac{(a+3)a}{2}} t^{(b+p+1)^2} F^a E^a K^{-a+b}, \label{E:ribbon_element} \\*
 v_- &= \frac{1+i}{2\sqrt{p}} 
 \sum_{a=0}^{p-1} \sum_{b=0}^{2p-1} \frac{\{ 1 \}^a}{[a]!} 
 q^{\frac{(a+3)a}{2}} t^{-(b+p-1)^2} F^a E^a K^{a+b}, \label{E:inverse_ribbon_element}
\end{align}
as shown in Lemma~\ref{L:ribbon}, and that the M-matrix and its inverse $M_+ = M'_+ \otimes M''_+$, $M_- = M'_- \otimes M''_- \in U \otimes U$ are given by
\begin{align}
 M_+ &= \frac{1}{2p} 
 \sum_{a,b=0}^{p-1} \sum_{c,d=0}^{2p-1} \frac{\{ 1 \}^{a+b}}{[a]![b]!} q^{\frac{a(a-1)+b(b-1)}{2} - 2b^2 - cd} K^{-b+c} F^b E^a \otimes K^{b+d} E^b F^a, \label{E:M-matrix} \\*
 M_- &= \frac{1}{2p} 
 \sum_{a,b=0}^{p-1} \sum_{c,d=0}^{2p-1} \frac{\mathclap{\{ -1 \}^{a+b}}}{\hspace*{1pt} [a]![b]! \hspace*{1pt}} q^{-\frac{a(a-1)+b(b-1)}{2} + 2b^2 + cd} E^a F^b K^{-b+c} \otimes F^a E^b K^{b+d}, \label{E:inverse_M-matrix}
\end{align}
as shown in Lemma~\ref{L:monodromy}.

A PBW basis of $U$ is given by
\[
 \left\{ E^a F^b K^c \mid 0 \leqs a,b \leqs p - 1, 0 \leqs c \leqs 2p-1 \right\},
\]
and our preferred non-zero left integral $\lambda$ of $U$ is given by
\begin{equation}\label{E:integral}
 \lambda \left( E^a F^b K^c \right) = 
 \frac{\sqrt{2p} [p-1]!}{i^{p-1} \{ 1 \}^{p-1}}
 \delta_{a,p-1} \delta_{b,p-1} \delta_{c,p-1},
\end{equation}
while our preferred non-zero two-sided cointegral $\Lambda$ of $U$ satisfying $\lambda(\Lambda) = 1$ is given by
\begin{equation}\label{E:cointegral}
 \Lambda := 
 \frac{i^{p-1} \{ 1 \}^{p-1}}{\sqrt{2p} [p-1]!}
 \sum_{a=0}^{2p-1} E^{p-1} F^{p-1} K^a.
\end{equation}
Similarly, a PBW basis of $\tilde{U}$ is given by
\[
 \left\{ E^a F^b \tilde{K}^c \mid 0 \leqs a,b \leqs p - 1, 0 \leqs c \leqs 4p-1 \right\},
\]
and our preferred non-zero left integral $\tilde{\lambda}$ of $\tilde{U}$ is given by
\begin{equation*}
 \tilde{\lambda} \left( E^a F^b \tilde{K}^c \right) = 
 \frac{\sqrt{2p} [p-1]!}{i^{p-1} \{ 1 \}^{p-1}}
 \delta_{a,p-1} \delta_{b,p-1} \delta_{c,2p-2},
\end{equation*}
while our preferred non-zero two-sided cointegral $\tilde{\Lambda}$ of $\tilde{U}$ satisfying $\tilde{\lambda}(\tilde{\Lambda}) = 1$ is given by
\begin{equation*}
 \tilde{\Lambda} := 
 \frac{i^{p-1} \{ 1 \}^{p-1}}{\sqrt{2p} [p-1]!}
 \sum_{a=0}^{4p-1} E^{p-1} F^{p-1} \tilde{K}^a.
\end{equation*}
Both $U$ and $\tilde{U}$ are thus unimodular Hopf algebras.

Now let us consider the central orthogonal idempotents
\begin{align*}
 1_0 &= \frac{1+K^p}{2}, &
 1_1 &= \frac{1-K^p}{2}.
\end{align*}
We have:
\begin{align*}
 1 &= 1_0 + 1_1, &
 1_\alpha 1_\beta &= \delta_{\alpha,\beta} 1_\alpha, \\
 \Delta(1_0) &= 1_0 \otimes 1_0 + 1_1 \otimes 1_1, &
 \Delta(1_1) &= 1_0 \otimes 1_1 + 1_1 \otimes 1_0, \\
 \varepsilon(1_0) &= 1, &
 \varepsilon(1_1) &= 0, \\
 S(1_0) &= 1_0, &
 S(1_1) &= 1_1.
\end{align*}
Therefore, $U$ is a unimodular Hopf $\Z/2\Z$-coalgebra, while $\tilde{U}$ is a ribbon one.

Recall that, if $H$ is a finite-dimensional Hopf algebra, its \textit{adjoint representation} is the vector space $H$ equipped with the adjoint left $H$-action
\[
 x \triangleright y := x_{(1)}yS(x_{(2)})
\]
for all $x,y \in H$.

A grading
\[
 U = \bigoplus_{n=-p+1}^{p-1} \Gamma_n(U)
\]
is obtained by setting $\Gamma_n(U)$ to be the linear subspace of $U$ with basis
\[
 \{ E^a F^b K^c \mid 0 \leqs a,b \leqs p - 1, 0 \leqs c \leqs 2p-1, a-b=n \},
\]
for every $-p+1 \leqs n \leqs p-1$. If $x \in \Gamma_n(U)$, then we say $x$ is homogeneous of degree $n$, and we write $\lvert x \rvert = n$.

\begin{lemma}
 The linear subspace $U$ of $\tilde{U}$ is closed under the adjoint left action of $\tilde{U}$.
\end{lemma}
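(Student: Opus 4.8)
The plan is to exploit the fact that the adjoint action makes $\tilde{U}$ into a left $\tilde{U}$-module: a routine check shows that $(xx') \triangleright y = x \triangleright (x' \triangleright y)$, using that $\Delta$ is an algebra map and $S$ an algebra anti-homomorphism. Consequently, the subspace $U \subseteq \tilde{U}$ is a submodule as soon as each algebra generator $\tilde{E}$, $\tilde{F}$, $\tilde{K}$ of $\tilde{U}$ maps $U$ into $U$: since monomials in these three generators span $\tilde{U}$, closedness under the generators propagates to all of $\tilde{U}$ by iteration and linearity. The whole task thus reduces to three short computations with the coproducts and antipodes recorded above, keeping in mind the identifications $\tilde{E} = E$, $\tilde{F} = F$, and $\tilde{K}^2 = K$.

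First I would treat $\tilde{K}$. Since $\Delta(\tilde{K}) = \tilde{K} \otimes \tilde{K}$ and $S(\tilde{K}) = \tilde{K}^{-1}$, its adjoint action is conjugation, $\tilde{K} \triangleright y = \tilde{K} y \tilde{K}^{-1}$, which is an algebra automorphism of $\tilde{U}$. Evaluating it on the generators of $U$ gives $\tilde{K} E \tilde{K}^{-1} = qE$, $\tilde{K} F \tilde{K}^{-1} = q^{-1}F$, and $\tilde{K} K \tilde{K}^{-1} = K$, all of which lie in $U$; hence this automorphism carries the subalgebra $U$ into itself. This is the only mildly delicate point, and the one that makes the statement non-trivial: although $\tilde{K} \notin U$, conjugation by it still preserves $U$, merely rescaling $E$ and $F$ by the root of unity $q^{\pm 1}$.

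Next I would compute the action of $\tilde{E}$. From $\Delta(\tilde{E}) = \tilde{E} \otimes \tilde{K}^2 + 1 \otimes \tilde{E}$ and $S(\tilde{E}) = -\tilde{E}\tilde{K}^{-2}$ one obtains, for $y \in U$,
\[
 \tilde{E} \triangleright y = \tilde{E} \, y \, S(\tilde{K}^2) + y \, S(\tilde{E}) = \tilde{E}y\tilde{K}^{-2} - y\tilde{E}\tilde{K}^{-2} = [E,y]K^{-1},
\]
which lies in $U$ because $E$, $y$, $K^{-1}$ all do and $U$ is a subalgebra. Symmetrically, from $\Delta(\tilde{F}) = \tilde{F} \otimes 1 + \tilde{K}^{-2} \otimes \tilde{F}$ and $S(\tilde{F}) = -\tilde{K}^2\tilde{F}$ one gets
\[
 \tilde{F} \triangleright y = \tilde{F} \, y \, S(1) + \tilde{K}^{-2} \, y \, S(\tilde{F}) = Fy - K^{-1}yKF \in U.
\]
Having verified closedness under all three generators, the module structure established at the outset yields closedness under all of $\tilde{U}$, completing the argument.

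I do not expect a serious obstacle here beyond bookkeeping: the calculations are elementary once the reduction to generators is in place, and each of the three outputs is manifestly a product of elements of $U$. The one step worth flagging for the reader is the $\tilde{K}$ case, precisely because $\tilde{K}$ itself does not belong to $U$; it is the compatibility of its conjugation with the PBW description of $U$ that underlies the whole closedness phenomenon.
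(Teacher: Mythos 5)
Your proof is correct and follows essentially the same route as the paper: both reduce to the algebra generators of $\tilde{U}$ via multiplicativity of the adjoint action, with the crucial point being that conjugation by $\tilde{K}$ preserves $U$ (the paper phrases this as $\tilde{K} \triangleright x = q^{\lvert x \rvert} x$ on homogeneous PBW monomials, you phrase it via the generators $E$, $F$, $K$). The only cosmetic difference is that the paper disposes of $\tilde{E}$ and $\tilde{F}$ implicitly, using that $U$ is a Hopf subalgebra of $\tilde{U}$ and hence closed under its own adjoint action, whereas you compute $\tilde{E} \triangleright y = [E,y]K^{-1}$ and $\tilde{F} \triangleright y = Fy - K^{-1}yKF$ explicitly.
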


\begin{proof}
 For every homogeneous $x \in U$ we have
 \[
  \tilde{K} \triangleright x = \tilde{K} x \tilde{K}^{-1} = q^{\lvert x \rvert} x. \qedhere
 \]
\end{proof}

\begin{lemma}\label{L:adjoint_action_of_D}
 For all homogeneous $x,y \in U$ we have
 \begin{align}
  (\tilde{D}' \triangleright x) \otimes \tilde{D}'' &= x \otimes K^{\lvert x \rvert}, \label{E:diagonal_adjoint_action_1} \\*
  (\tilde{D}' \triangleright x) \otimes (\tilde{D}'' \triangleright y) &= q^{2 \lvert x \rvert \lvert y \rvert} x \otimes y. \label{E:diagonal_adjoint_action_2} 
 \end{align}
\end{lemma}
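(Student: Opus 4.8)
The plan is to reduce both identities to the action of powers of $\tilde{K}$ on homogeneous elements, and then to evaluate a finite Gauss-type sum of roots of unity. First I would observe that, since $\tilde{K}$ is group-like, $\Delta(\tilde{K}^a) = \tilde{K}^a \otimes \tilde{K}^a$, so the adjoint action is simply conjugation, $\tilde{K}^a \triangleright z = \tilde{K}^a z \tilde{K}^{-a}$. Iterating the preceding lemma then gives, for homogeneous $x \in U$,
\[
 \tilde{K}^a \triangleright x = q^{a \lvert x \rvert} x.
\]
The crucial arithmetic input is that $t = e^{\pi i / 2p}$ is a primitive $4p$-th root of unity satisfying $t^2 = q$, so that $\sum_{a=0}^{4p-1} t^{am}$ equals $4p$ when $m \equiv 0 \pmod{4p}$ and vanishes otherwise.

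For Equation~\eqref{E:diagonal_adjoint_action_1}, I would substitute the definition of $\tilde{D}$ together with $q^{a \lvert x \rvert} = t^{2 a \lvert x \rvert}$ to obtain
\[
 (\tilde{D}' \triangleright x) \otimes \tilde{D}''
 = \frac{x}{4p} \sum_{b=0}^{4p-1} \Bigg( \sum_{a=0}^{4p-1} t^{a(2 \lvert x \rvert - b)} \Bigg) \otimes \tilde{K}^b.
\]
The inner sum over $a$ vanishes unless $b \equiv 2 \lvert x \rvert \pmod{4p}$, in which case it equals $4p$. Since $\lvert x \rvert \in \{ -p+1, \ldots, p-1 \}$, exactly one index $b \in \{ 0, \ldots, 4p-1 \}$ survives, and for it $\tilde{K}^b = \tilde{K}^{2 \lvert x \rvert} = K^{\lvert x \rvert}$, using $\tilde{K}^{4p} = 1$ and the identification $K = \tilde{K}^2$. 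This leaves exactly $x \otimes K^{\lvert x \rvert}$.

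For Equation~\eqref{E:diagonal_adjoint_action_2}, the analogous substitution (now applying the exponent formula to both tensor factors) gives
\[
 (\tilde{D}' \triangleright x) \otimes (\tilde{D}'' \triangleright y)
 = \frac{x \otimes y}{4p} \sum_{b=0}^{4p-1} t^{2 b \lvert y \rvert} \Bigg( \sum_{a=0}^{4p-1} t^{a(2 \lvert x \rvert - b)} \Bigg).
\]
Once more only $b \equiv 2 \lvert x \rvert \pmod{4p}$ contributes a factor $4p$, and for that value of $b$ one computes $t^{2 b \lvert y \rvert} = t^{4 \lvert x \rvert \lvert y \rvert} = q^{2 \lvert x \rvert \lvert y \rvert}$, since $t^{4p} = 1$ and $t^4 = q^2$. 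Collecting factors yields $q^{2 \lvert x \rvert \lvert y \rvert} x \otimes y$, as claimed. The only point demanding care is the root-of-unity bookkeeping: because $\lvert x \rvert$ may be negative, one must reduce $2 \lvert x \rvert$ modulo $4p$ to pin down the surviving index $b$, and then invoke both $\tilde{K}^{4p} = 1$ and $K = \tilde{K}^2$ to convert $\tilde{K}^b$ into $K^{\lvert x \rvert}$ in the first identity and to simplify $t^{2 b \lvert y \rvert}$ in the second. No further subtlety arises.
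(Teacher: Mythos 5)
Your proof is correct and follows essentially the same route as the paper's: expand the definition of $\tilde{D}$, use that conjugation by $\tilde{K}^a$ multiplies a homogeneous element by $t^{2a\lvert x\rvert}$, and evaluate the character sum over $a$ to produce a Kronecker delta selecting $b \equiv 2\lvert x\rvert \pmod{4p}$. In fact you are slightly more careful than the paper, which writes $\delta_{b,2\lvert x\rvert}$ without remarking that for negative $\lvert x\rvert$ the surviving index must be read modulo $4p$ --- a point your bookkeeping handles explicitly.
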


\begin{proof}
 The first equality follows from
 \begin{align*}
  (\tilde{D}' \triangleright x) \otimes \tilde{D}'' &= 
  \frac{1}{4p} \sum_{a,b=0}^{4p-1} t^{-ab} \tilde{K}^a x \tilde{K}^{-a} \otimes \tilde{K}^b 
  = \frac{1}{4p} \sum_{a,b=0}^{4p-1} t^{2 a \lvert x \rvert -ab} x \otimes \tilde{K}^b \\*
  &= \sum_{b=0}^{4p-1} \left( \frac{1}{4p} \sum_{a=0}^{4p-1} t^{a(2 \lvert x \rvert -b)} \right) x \otimes \tilde{K}^b 
  = \sum_{b=0}^{4p-1} \delta_{b,2 \lvert x \rvert} x \otimes \tilde{K}^b \\*
  &= x \otimes \tilde{K}^{2 \lvert x \rvert} 
  = x \otimes K^{\lvert x \rvert}.
 \end{align*}
 Similarly, the second equality follows from
 \begin{align*}
  (\tilde{D}' \triangleright x) \otimes (\tilde{D}'' \triangleright y) &= 
  \frac{1}{4p} \sum_{a,b=0}^{4p-1} t^{-ab} \tilde{K}^a x \tilde{K}^{-a} \otimes \tilde{K}^b y \tilde{K}^{-b} \\*
  &= \frac{1}{4p} \sum_{a,b=0}^{4p-1} t^{2 a \lvert x \rvert + 2 b \lvert y \rvert -ab} x \otimes y \\*
  &= \sum_{b=0}^{4p-1} \left( \frac{1}{4p} \sum_{a=0}^{4p-1} t^{a(2 \lvert x \rvert -b)} \right) t^{2 b \lvert y \rvert} x \otimes y \\*
  &= \sum_{b=0}^{4p-1} \delta_{b,2 \lvert x \rvert} t^{2 b \lvert y \rvert} x \otimes y 
  = t^{4 \lvert x \rvert \lvert y \rvert} x \otimes y 
  = q^{2 \lvert x \rvert \lvert y \rvert} x \otimes y. \qedhere
 \end{align*}
\end{proof}

We denote by $\myuline{U} \in \mods{\tilde{U}}$ the vector space $U$ equipped with the adjoint left action of $\tilde{U}$, and with $\myuline{\tilde{U}} \in \mods{\tilde{U}}$ the adjoint representation of $\tilde{U}$.

\begin{proposition}\label{P:3-modular}
 $\myuline{U}$ is a factorizable BPH algebra in $\mods{\tilde{U}}$, with structure morphisms given, for all $x,y \in \myuline{U}$, by
 \begin{align*}
  \myuline{\mu}(x \otimes y) &= xy, &
  \myuline{\eta}(1) &= 1, \\*
  \myuline{\Delta}(x) &= x_{(1)}S(\Theta'')K^{-\lvert \Theta' \triangleright x_{(2)} \rvert} \otimes (\Theta' \triangleright x_{(2)}), &
  \myuline{\varepsilon}(x) &= \varepsilon(x), \\*
  \myuline{S}(x) &= K^{\lvert \Theta' \triangleright x \rvert} \Theta'' S(\Theta' \triangleright x), &
  \myuline{S^{-1}}(x) &= S^{-1}(\Theta' \triangleright x) K^{\lvert \Theta' \triangleright x \rvert} \Theta'', \\
  \myuline{v_+}(1) &= v_+, &
  \myuline{v_-}(1) &= v_-, \\*
  \myuline{w_+}(1) &= S(M'_+) \otimes M''_+, &
  \myuline{w_-}(1) &= S(M'_-) \otimes M''_-, \\
  \myuline{\lambda}(x) &= \lambda(x), &
  \myuline{\Lambda}(1) &= \Lambda.
 \end{align*}
\end{proposition}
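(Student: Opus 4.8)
The plan is to realize $\myuline{U}$ as a sub-Hopf-algebra object of the transmutation $\myuline{\tilde{U}}$ inside $\mods{\tilde{U}}$, and to read off the explicit structure morphisms by splitting the R-matrix as $\tilde{R} = \tilde{D}\Theta$. By \cite[Theorem~1.1]{BD21} (together with the description of the end recalled in the introduction), the transmutation $\myuline{\tilde{U}}$---the adjoint representation of $\tilde{U}$---is a $4$-modular Hopf algebra, whose braided coproduct, antipode, ribbon elements, and integral are built from the full ribbon structure of $\tilde{U}$. Since $U$ is closed under the adjoint action of $\tilde{U}$ (established just above), $\myuline{U}$ is a subobject of $\myuline{\tilde{U}}$ in $\mods{\tilde{U}}$, and the first task is to check that it is closed under each listed morphism. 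For $\myuline{\mu}$, $\myuline{\eta}$, $\myuline{\varepsilon}$, $\myuline{\lambda}$, and $\myuline{\Lambda}$ this is immediate, as $U$ is a Hopf subalgebra of $\tilde{U}$ and $\lambda,\Lambda \in U$ by \eqref{E:integral} and \eqref{E:cointegral}. For $\myuline{\Delta}$ and $\myuline{S}$ the only obstruction is the diagonal factor $\tilde{D} \notin U \otimes U$; here I would substitute $\tilde{R} = \tilde{D}\Theta$ into the transmutation formulas and evaluate the adjoint action of $\tilde{D}$ via Lemma~\ref{L:adjoint_action_of_D}. Concretely, writing $y = \Theta' \triangleright x_{(2)}$ and using \eqref{E:diagonal_adjoint_action_1} in the equivalent form $S(\tilde{D}'') \otimes (\tilde{D}' \triangleright y) = K^{-\lvert y \rvert} \otimes y$, the offending diagonal part collapses to the power $K^{-\lvert \Theta' \triangleright x_{(2)} \rvert} \in U$, which reproduces the stated formula for $\myuline{\Delta}$ and places its image in $U \otimes U$; the formulas for $\myuline{S}$ and $\myuline{S^{-1}}$ follow the same pattern.

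Once closure is established, the full list of $4$-modular axioms---associativity and unitality of $\myuline{\mu}$, coassociativity and counitality of $\myuline{\Delta}$, the braided bialgebra compatibility, the antipode identities, and the ribbon and integral relations---is inherited verbatim from $\myuline{\tilde{U}}$: each is an equality of morphisms in $\mods{\tilde{U}}$ that already holds on the larger object and corestricts to the subobject $\myuline{U}$, whose structure morphisms are by construction the (co)restrictions of those of $\myuline{\tilde{U}}$. Equivariance of every listed morphism is inherited in the same way. It then only remains to record that the remaining ingredients genuinely lie in $U$: the ribbon elements $\myuline{v_\pm} = v_\pm \in U$ by Lemma~\ref{L:ribbon}, and the braided copairings $\myuline{w_\pm} = S(M'_\pm) \otimes M''_\pm$ by Lemma~\ref{L:monodromy}, since $M_\pm \in U \otimes U$ by \eqref{E:M-matrix} and \eqref{E:inverse_M-matrix}.

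The genuinely new point, and the main obstacle, is the upgrade from $4$-modular to $3$-modular---that is, the two additional relations by which $\RCob$ is a quotient of $\RHB$. These fail for $\myuline{\tilde{U}}$, because $\tilde{U}$ is not factorizable, yet they hold for $\myuline{U}$ precisely because $U$ is. I would phrase the two relations as the assertion that $\myuline{w_+}$ is a non-degenerate Hopf copairing with inverse $\myuline{w_-}$, and verify them using that the Drinfeld map $D : U^* \to U$ determined by $M_+ \in U \otimes U$ is a linear isomorphism. The crux is that, although we compute in $\mods{\tilde{U}}$, the copairing, integral, and cointegral of $\myuline{U}$ are exactly those of the factorizable Hopf algebra $U$, so the factorizability relations close up within $U$; the diagonal phases $q^{2\lvert x \rvert \lvert y \rvert}$ produced by $\tilde{D}$ in \eqref{E:diagonal_adjoint_action_2} merely rescale homogeneous components and are absorbed by the $K$-powers already present in $\myuline{\Delta}$ and $\myuline{S}$. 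Checking that this phase bookkeeping cancels consistently across both $3$-modular relations is the step I expect to demand the most care.
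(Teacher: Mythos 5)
Your overall strategy is the same as the paper's: restrict the $4$-modular structure of $\myuline{\tilde{U}}$ (from \cite[Proposition~7.3]{BD21}) to the subobject $\myuline{U}$, use Lemma~\ref{L:adjoint_action_of_D} to collapse the diagonal factor $\tilde{D}$ of $\tilde{R} = \tilde{D}\Theta$ into the powers of $K$ appearing in $\myuline{\Delta}$ and $\myuline{S}$, and then handle the modularity relation using the fact that $U$ (unlike $\tilde{U}$) is factorizable. However, there is a genuine gap in your inheritance argument. You list the cointegral $\myuline{\Lambda}$ among the morphisms whose closure is ``immediate,'' and you then claim that all $4$-modular axioms are ``inherited verbatim'' because the structure morphisms of $\myuline{U}$ are ``by construction the (co)restrictions of those of $\myuline{\tilde{U}}$.'' This is false for the cointegral: the cointegral of $\myuline{\tilde{U}}$ is $\tilde{\Lambda}$, which is a sum over \emph{all} powers $\tilde{K}^a$, $0 \leqs a \leqs 4p-1$, hence contains odd powers of $\tilde{K}$ and does not even lie in $U$; in particular it is a different element from $\Lambda$. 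So $\myuline{\Lambda}(1) = \Lambda$ is \emph{not} a corestriction of $\myuline{\tilde{\Lambda}}(1) = \tilde{\Lambda}$, but a genuinely new structure morphism, and every axiom involving it escapes your inheritance argument. These are precisely the axioms the paper checks by hand: that $\Lambda$ is a two-sided cointegral for the braided structure, that $\myuline{S}(\myuline{\Lambda}) = \myuline{\Lambda}$ (a short computation using the stated formula for $\myuline{S}$), the normalization $\myuline{\lambda} \circ \myuline{\Lambda} = 1$, and, crucially, the modularity relation itself, which reads $(\id_{\myuline{U}} \otimes \myuline{\lambda}) \circ \myuline{w_+} = \myuline{\Lambda}$, i.e.\ $\lambda(M''_+)\,S(M'_+) = \Lambda$, and is established in the paper by a direct computation from \eqref{E:M-matrix} and \eqref{E:integral}.

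A related weakness is your plan to verify the $3$-modular upgrade purely from the statement that the Drinfeld map of $U$ is a linear isomorphism. Non-degeneracy alone is not the relation to be checked: the relation identifies \emph{which} element is produced by pairing $\myuline{w_+}$ against $\myuline{\lambda}$, namely the specific cointegral $\Lambda$ of \eqref{E:cointegral} with the normalization $\lambda(\Lambda)=1$. Factorizability guarantees at best that $\lambda(M''_+)S(M'_+)$ is \emph{some} nonzero two-sided cointegral, so a scalar still has to be pinned down, and doing so amounts to essentially the same explicit computation the paper performs. By contrast, the ``phase bookkeeping'' from $\tilde{D}$ that you single out as the hardest step is not where the difficulty lies: once Lemma~\ref{L:adjoint_action_of_D} is applied, the diagonal part is gone and all remaining verifications take place entirely inside $U$.
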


\begin{proof}
 Since $\tilde{U}$ is a unimodular ribbon Hopf algebra, \cite[Proposition~7.3]{BD21} implies that $\myuline{\tilde{U}} \in \mods{\tilde{U}}$ is a BPH algebra. Structure morphisms of $\myuline{\tilde{U}}$ are given, for all $x,y \in \myuline{\tilde{U}}$, by
 \begin{align*}
  \myuline{\tilde{\mu}}(x \otimes y) &= xy, &
  \myuline{\tilde{\eta}}(1) &= 1, \\*
  \myuline{\tilde{\Delta}}(x) &= x_{(1)}S(\tilde{D}''\Theta'') \otimes ((\tilde{D}'\Theta') \triangleright x_{(2)}), &
  \myuline{\tilde{\varepsilon}}(x) &= \varepsilon(x), \\*
  \myuline{\tilde{S}}(x) &= \tilde{D}''\Theta'' S((\tilde{D}'\Theta') \triangleright x), &
  \myuline{\tilde{S}^{-1}}(x) &= S^{-1}((\tilde{D}'\Theta') \triangleright x) \tilde{D}''\Theta'', \\
  \myuline{\tilde{v}_+}(1) &= v_+, &
  \myuline{\tilde{v}_-}(1) &= v_-, \\*
  \myuline{\tilde{w}_+}(1) &= S(M'_+) \otimes M''_+, &
  \myuline{\tilde{w}_-}(1) &= S(M'_-) \otimes M''_-, \\
  \myuline{\tilde{\lambda}}(x) &= \tilde{\lambda}(x), &
  \myuline{\tilde{\Lambda}}(1) &= \tilde{\Lambda}.
 \end{align*}
 Since $U$ is a Hopf subalgebra of $\tilde{U}$, the unit $\myuline{\tilde{\eta}} : \bbC \to \myuline{\tilde{U}}$ factors through a unit $\myuline{\eta} : \bbC \to \myuline{U}$, while the product $\myuline{\tilde{\mu}} : \myuline{\tilde{U}} \otimes \myuline{\tilde{U}} \to \myuline{\tilde{U}}$ and the counit $\myuline{\tilde{\varepsilon}} : \myuline{\tilde{U}} \to \bbC$ restrict to a product $\myuline{\mu} : \myuline{U} \otimes \myuline{U} \to \myuline{U}$ and a counit $\myuline{\varepsilon} : \myuline{U} \to \bbC$. Equations~\eqref{E:ribbon_element} \& \eqref{E:inverse_ribbon_element} imply that the ribbon element $\myuline{\tilde{v}_+} : \bbC \to \myuline{\tilde{U}}$ and its inverse $\myuline{\tilde{v}_-} : \bbC \to \myuline{\tilde{U}}$ factor through a ribbon element $\myuline{v_+} : \bbC \to \myuline{U}$ with inverse $\myuline{v_-} : \bbC \to \myuline{U}$, and Equation~\eqref{E:integral} implies that the integral $\myuline{\tilde{\lambda}} : \myuline{U} \to \bbC$ restricts to an integral $\myuline{\lambda} : \myuline{U} \to \bbC$. However, it should be noted that the cointegral $\myuline{\tilde{\Lambda}} : \bbC \to \myuline{\tilde{U}}$ is different from the cointegral $\myuline{\Lambda} : \bbC \to \myuline{U}$, as witnessed by Equation~\eqref{E:cointegral}. The braiding $c_{\myuline{\tilde{U}},\myuline{\tilde{U}}} : \myuline{\tilde{U}} \otimes \myuline{\tilde{U}} \to \myuline{\tilde{U}} \otimes \myuline{\tilde{U}}$ restricts to a braiding $c_{\myuline{U},\myuline{U}} : \myuline{U} \otimes \myuline{U} \to \myuline{U} \otimes \myuline{U}$, because $\mods{\tilde{U}}$ is a ribbon category, although a direct proof follows from Equation~\eqref{E:diagonal_adjoint_action_2}. Therefore, we need to show that the coproduct $\myuline{\tilde{\Delta}} : \myuline{\tilde{U}} \to \myuline{\tilde{U}} \otimes \myuline{\tilde{U}}$, the antipode $\myuline{\tilde{S}} : \myuline{\tilde{U}} \to \myuline{\tilde{U}}$, and its inverse $\myuline{\tilde{S}^{-1}} : \myuline{\tilde{U}} \to \myuline{\tilde{U}}$ restrict to a coproduct $\myuline{\Delta} : \myuline{U} \to \myuline{U} \otimes \myuline{U}$ and an antipode $\myuline{S} : \myuline{U} \to \myuline{U}$ with inverse $\myuline{S^{-1}} : \myuline{U} \to \myuline{U}$. Notice that
 \begin{align*}
  \myuline{\tilde{\Delta}}(x) &= x_{(1)}S(\tilde{D}''\Theta'') \otimes (\tilde{D}' \triangleright (\Theta' \triangleright x_{(2)})), \\*
  \myuline{\tilde{S}}(x) &= \tilde{D}''\Theta'' S(\tilde{D}' \triangleright (\Theta' \triangleright x)), \\*
  \myuline{\tilde{S}^{-1}}(x) &= S^{-1}(\tilde{D}' \triangleright (\Theta' \triangleright x)) \tilde{D}''\Theta''.
 \end{align*}
 Then the claim follows directly from Equation~\eqref{E:diagonal_adjoint_action_1}.
 
 Next, we need to check that these structure morphisms satisfy the defining conditions of \cite[Definitions~5.1--6.4]{BD21}. For what concerns \cite[Definition~5.1]{BD21}, Equation~$(i)$ is clearly satisfied because $U$ is an associative unital algebra, while Equations~$(ii)$--$(iv)$ are satisfied because $\myuline{\Delta}$, $\myuline{\varepsilon}$, $\myuline{S}$, and $\myuline{S^{-1}}$ are restrictions of $\myuline{\tilde{\Delta}}$, $\myuline{\tilde{\varepsilon}}$, $\myuline{\tilde{S}}$, and $\myuline{\tilde{S}^{-1}}$ respectively, which also satisfy Equations~$(ii)$--$(iv)$. For what concerns \cite[Definition~6.1]{BD21}, Equations~$(i)$--$(iii)$ are satisfied because $\myuline{\tilde{v}_+}$, $\myuline{\tilde{v}_-}$, $\myuline{\tilde{w}_+}$, $\myuline{\tilde{w}_-}$, and $\myuline{\tilde{\eta}}$ factor through $\myuline{v_+}$, $\myuline{v_-}$, $\myuline{w_+}$, $\myuline{w_-}$, and $\myuline{\eta}$, while $\myuline{\mu}$, $\myuline{\Delta}$, $\myuline{\varepsilon}$, $\myuline{S}$, $\myuline{S^{-1}}$, and $c_{\myuline{U},\myuline{U}}$ are restrictions of $\myuline{\tilde{\mu}}$, $\myuline{\tilde{\Delta}}$, $\myuline{\tilde{\varepsilon}}$, $\myuline{\tilde{S}}$, $\myuline{\tilde{S}^{-1}}$, and $c_{\myuline{\tilde{U}},\myuline{\tilde{U}}}$ respectively, which also satisfy Equations~$(i)$--$(iii)$. For what concerns \cite[Definition~6.2]{BD21}, Equation~$(i)$ is satisfied because $\myuline{\lambda}$, $\myuline{\Delta}$, and $\myuline{S}$ are restrictions of $\myuline{\tilde{\lambda}}$, $\myuline{\tilde{\Delta}}$, and $\myuline{\tilde{S}}$ respectively, which also satisfy Equation~$(i)$. Therefore, we need to check by hand Equations~$(ii)$ \& $(iii)$, which are the only ones involving $\Lambda$. The first part of Equation~$(ii)$ follows from the fact that $\Lambda$ is a left cointegral of $U$, while the second part is established by computing
 \begin{align*}
  \myuline{S}(\myuline{\Lambda}) = K^{\lvert \Theta' \triangleright \Lambda \rvert} \Theta'' S(\Theta' \triangleright \Lambda) = S(\Lambda) = \Lambda = \myuline{\Lambda}.
 \end{align*}
 Equation~$(iii)$ follows from our choice of normalizations for $\lambda$ and $\Lambda$. 
 
 Finally, for what concerns \cite[Definition~6.4]{BD21}, we compute
 \begin{align*}
  &(\id_{\myuline{U}} \otimes \myuline{\lambda}) \circ \myuline{w_+} 
  = \lambda(M''_+) S(M'_+) \\*
  &\hspace*{\parindent} = \frac{1}{2p} 
 \sum_{a,b=0}^{p-1} \sum_{c,d=0}^{2p-1} \frac{\{ 1 \}^{a+b}}{[a]![b]!} q^{\frac{a(a-1)+b(b-1)}{2} - 2b^2 - cd} \lambda(K^{b+d} E^b F^a) S(K^{-b+c} F^b E^a) \\*
  &\hspace*{\parindent} = \frac{1}{\sqrt{2p}} \sum_{c=0}^{2p-1} \frac{\{ 1 \}^{p-1}}{i^{p-1} [p-1]!} q^{(p-1)(p-2) - 2(p-1)^2} S(K^{c-p+1} F^{p-1} E^{p-1}) \\*
  &\hspace*{\parindent} = \frac{q^{-p(p-1)}}{i^{p-1} \sqrt{2p}} \frac{\{ 1 \}^{p-1}}{[p-1]!} \sum_{c=0}^{2p-1} E^{p-1} F^{p-1} K^{-c+p-1} \\*
  &\hspace*{\parindent} = \frac{(-1)^{p-1}}{i^{p-1} \sqrt{2p}} \frac{\{ 1 \}^{p-1}}{[p-1]!} \sum_{a=0}^{2p-1} E^{p-1} F^{p-1} K^a 
  = \Lambda = \myuline{\Lambda}. \qedhere
 \end{align*}
\end{proof}

As a direct consequence of \cite[Theorem~1.6]{BD21} and of Proposition~\ref{P:3-modular}, we immediately obtain the following result.

\begin{corollary}\label{C:U_factorizable}
 There exists a unique $3$-di\-men\-sion\-al braided TFT\index{TQFT} 
 \[
  J_3^\sigma : \FRCob \to \mods{\tilde{U}}
 \]
 sending $1 \in \FRCob$ (the punctured torus) to $\myuline{U} \in \mods{\tilde{U}}$.
\end{corollary}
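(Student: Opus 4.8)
The plan is to deduce the corollary formally from the universal property of $\FRCob$ recorded in \cite[Theorem~1.6]{BD21}, so that essentially no new computation is needed beyond identifying the correct input. That theorem should be read as asserting that $\FRCob$ is the braided monoidal category freely generated by a single $3$-modular Hopf algebra object, the torus $1$; equivalently, for a ribbon category $\calC$, braided TFTs $\FRCob \to \calC$ are in natural bijection with $3$-modular Hopf algebra objects of $\calC$, the bijection sending a functor to the image of the generator $1$. The entire content of the corollary is therefore the verification that $\mods{\tilde U}$ is an admissible target and that $\myuline U$ is an admissible image.

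First I would record that $\mods{\tilde U}$ is a ribbon category, since $\tilde U$ is a ribbon Hopf algebra; this places us squarely within the hypotheses of \cite[Theorem~1.6]{BD21}. Next, Proposition~\ref{P:3-modular} supplies precisely the required input: it exhibits $\myuline U$ as a $3$-modular Hopf algebra object in $\mods{\tilde U}$, with all structure morphisms explicitly written down and checked against the defining axioms of \cite[Definitions~5.1--6.4]{BD21}.

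With these two facts in hand, \cite[Theorem~1.6]{BD21} immediately produces a braided monoidal functor $J_3^\sigma : \FRCob \to \mods{\tilde U}$ carrying $1$ to $\myuline U$ and the structure morphisms of the generator to those of $\myuline U$. Existence is exactly the assertion that such a functor exists. For uniqueness I would invoke that $1$ generates $\FRCob$ as a braided monoidal category, via the presentation of Bobtcheva--Piergallini \cite{BP11} transported along $\partial$: any braided TFT is then pinned down by its value on $1$ together with its compatibility with the $3$-modular structure, and both are fixed by the requirement $1 \mapsto \myuline U$. Hence two such functors must coincide.

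The one point I would treat as the main (if minor) obstacle is the bookkeeping hidden in the superscript $\sigma$. This decoration is built into $\FRCob$ precisely to absorb the framing anomaly that would otherwise render $J_3^\sigma$ only projectively well-defined, and the correction factor is governed by the value of the integral on the ribbon element. I would therefore confirm that the normalizations of $\lambda$ and of $\myuline{v_+}$, $\myuline{v_-}$ fixed in Proposition~\ref{P:3-modular} are compatible with the signature convention encoded in the source category, so that the anomaly is genuinely trivialized and the resulting functor is honest rather than projective.
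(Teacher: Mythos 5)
Your proposal is correct and follows exactly the paper's route: the paper gives no argument beyond declaring the corollary ``a direct consequence of [BD21, Theorem~1.6] and of Proposition~\ref{P:3-modular},'' which is precisely your reduction (read Theorem~1.6 as the universal property of $\FRCob$ freely generated by a $3$-modular Hopf algebra, then feed in $\myuline{U}$ from Proposition~\ref{P:3-modular}). Your closing worry about the $\sigma$-anomaly is already subsumed in the $3$-modularity axioms verified in Proposition~\ref{P:3-modular}, so no separate normalization check is needed.
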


\section{Refined invariant for restricted quantum \texorpdfstring{$\fsl_2$}{sl(2)}}

In this section, we prove that the refined invariant associated with the ribbon extension $\tilde{U}$ of $U$ can be actually computed entirely inside $U$.

\begin{proposition}\label{P:restricted}
 If $D$ is a $\Z/2\Z$-Kirby diagram, and if 
 \[
  x_1 \otimes \ldots \otimes x_k \in \tilde{U}_{\alpha_1} \otimes \ldots \otimes \tilde{U}_{\alpha_k}
 \]
 is a bead presentation of $D$, as defined in Equation~\eqref{E:bead_presentation}, then
 \[
  x_1 \otimes \ldots \otimes x_k \in U_{\alpha_1} \otimes \ldots \otimes U_{\alpha_k}.
 \]
\end{proposition}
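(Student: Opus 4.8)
The plan is to work with the $\Z/2\Z$-grading of $\tilde{U}$ that places $\tilde{K}$ in odd degree and $E=\tilde{E}$, $F=\tilde{F}$ in even degree; since every defining relation of $\tilde{U}$ involves only even powers of $\tilde{K}$, this is a genuine algebra grading, and its even part is exactly $U=\langle E,F,K=\tilde{K}^2\rangle$. As the idempotents $1_0,1_1$ and the pivotal element $g=K^{p+1}$ are $\tilde{K}$-even, we have $U_{\alpha}=\tilde{U}_{\alpha}\cap\tilde{U}^{\mathrm{even}}$, so the statement reduces to showing that each factor $x_i$ is $\tilde{K}$-even. The beads coming from the $3$-modular structure of $\myuline{U}$ — the pivotal $g$, the cointegral $\Lambda$, and the coproducts/antipodes used to resolve the $1$-handles — all lie in $U$ by Proposition~\ref{P:3-modular}; therefore the only source of odd $\tilde{K}$-parity is the diagonal Cartan part $\tilde{D}$ inside the R-matrix $\tilde{R}=\tilde{D}\Theta$ inserted at crossings.

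First I would pin down the $\tilde{K}$-parity of an R-matrix bead. Since $\Theta\in U\otimes U$ is $\tilde{K}$-even by \eqref{E:quasi-R-matrix}, it suffices to analyse $\tilde{D}(1_{\gamma}\otimes 1_{\delta})$ for a crossing of two strands of degrees $\gamma,\delta\in\Z/2\Z$. Running the same discrete Fourier averaging that proves Lemma~\ref{L:adjoint_action_of_D} — using $\tilde{K}^{2p}1_{\alpha}=(-1)^{\alpha}1_{\alpha}$ and $t^{2p}=-1$ to collapse the double sum in \eqref{E:diagonal-component} — one finds that the surviving terms have first tensor leg $\tilde{K}^{a_0}1_{\gamma}$ with $a_0\equiv\delta$ and second tensor leg $\tilde{K}^{b_0}1_{\delta}$ with $b_0\equiv\gamma\pmod 2$. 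In other words the two legs become $\tilde{K}$-homogeneous after projecting to the degrees of the crossing, and \emph{the bead deposited on a given strand has $\tilde{K}$-parity equal to the degree of the opposite strand}. Reversing an orientation replaces a bead by its image under the antipode, which preserves $\tilde{K}$-parity ($S(\tilde{K})=\tilde{K}^{-1}$, $S(E),S(F)\in U$), so this conclusion is orientation-independent.

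It then remains to sum these parities around each closed component $s$ of degree $\alpha_s$. Every self-crossing of $s$ deposits both legs on $s$ and hence contributes $\alpha_s+\alpha_s\equiv 0$, while a crossing of $s$ with a distinct component $s'$ contributes $\alpha_{s'}$; thus the total $\tilde{K}$-parity accumulated on $s$ is $\sum_{s'\ne s}\#\mathrm{cr}(s,s')\,\alpha_{s'}\pmod 2$, where $\#\mathrm{cr}(s,s')$ is the number of crossings between $s$ and $s'$. Here I would invoke the elementary topological fact that two distinct closed curves in the plane meet in an even number of points, so every $\#\mathrm{cr}(s,s')$ is even and the sum vanishes; consequently $x_s$ is $\tilde{K}$-even, i.e.\ $x_s\in U_{\alpha_s}$. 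The main obstacle — and the reason the statement is not immediate — is precisely that the individual R-matrix beads are genuinely outside $U$ whenever they sit at a crossing with an odd-degree strand (the legs at a $(1,1)$-crossing are both odd in $\tilde{K}$); membership in $U$ is an intrinsically global feature of each component, and making it rigorous requires both the definite-parity computation for $\tilde{D}$ above and the even-intersection count, together with the verification that the multi-leg cointegral insertions built from $\myuline{\Delta}$ stay $\tilde{K}$-even.
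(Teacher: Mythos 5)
Your handling of the crossings is correct and genuinely more elementary than the paper's: the parity grading of $\tilde{U}$ with $\tilde{K}$ odd and $E,F$ even is indeed an algebra grading with even part $U$; your Fourier-averaging computation showing that each leg of $\tilde{D}(1_\gamma \otimes 1_\delta)$ carries the parity of the \emph{opposite} strand's degree is right (it is a variant of Lemma~\ref{L:adjoint_action_of_D}); the even-crossing count for distinct closed components correctly globalizes it; and the pivotal beads $g = K^{p+1}$ are even. If the cointegral beads were even, your argument would close.

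The gap is the step you defer to your final sentence: the cointegral insertions are \emph{not} $\tilde{K}$-even, and no further verification can make them so. The algorithm of Theorem~\ref{T:main} --- which is what defines ``bead presentation'' --- resolves a dotted component by inserting the legs of the iterated $G$-coalgebra coproduct of the cointegral $\Lambda_0$ of the Hopf $\Z/2\Z$-coalgebra $\{\tilde{U}1_0,\tilde{U}1_1\}$, and by the splitting-system construction of Section~\ref{S:group-coalgebras} this cointegral is $\tilde{\Lambda}$, not the cointegral $\Lambda$ of $U$ appearing in Proposition~\ref{P:3-modular} (any two-sided cointegral for $\tilde{U}1_0$ is one for all of $\tilde{U}$, hence proportional to $\tilde{\Lambda}$). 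Comparing Equation~\eqref{E:cointegral} with its analogue for $\tilde{U}$ gives $\tilde{\Lambda} = \Lambda(1+\tilde{K})$, so every iterated coproduct of $\tilde{\Lambda}$ is a sum of terms whose legs are all $\tilde{K}$-even plus terms whose legs are all $\tilde{K}$-odd, and the odd terms survive the projections $1_{\alpha_i}$. Concretely, for the $\Z/2\Z$-Kirby diagram of a cancelling pair --- a $0$-framed unknot labelled by $0 \in \Z/2\Z$ passing once through a dotted circle --- the algorithm produces the single bead $x_1 = \tilde{\Lambda} = \Lambda + \Lambda\tilde{K} \notin U1_0$ (the pivotal corrections are even and act trivially on the cointegral). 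Since that component meets the dotted circle an odd number of times, no bead-by-bead parity bookkeeping can absorb this; your appeal to Proposition~\ref{P:3-modular} conflates the beads $(\Lambda_0)_{(i,\alpha_i)}$ that the algorithm actually inserts with the transmuted morphisms $\myuline{\Delta}$, $\myuline{\Lambda}$ of $\myuline{U}$, which do preserve $U$ but are a priori different data --- and proving that the two computations agree is precisely the content of the proposition.

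This is exactly why the paper's proof is not a per-bead argument. It slices the diagram into the generating Kirby tangles of \cite{BP11} and \cite{BD21}, and shows that the evaluation of each generator --- with the cointegral legs \emph{already contracted} against the integrals of the closed components occurring inside that generator --- equals the corresponding structure morphism of $\myuline{U}$ from Proposition~\ref{P:3-modular}; the key inputs are the Radford-type identities of \cite[Theorems~10.2.2 \& 10.5.4]{R12}, e.g. $\lambda(x S(\Lambda_{(1)}))\,\Lambda_{(2)} \otimes \Lambda_{(3)} = x_{(1)} \otimes x_{(2)}$, which reproduce coproducts of elements of $U$ out of cointegral legs. Membership in $U$ is thus a property of these composite evaluations, in which the odd part of $\tilde{\Lambda}$ is integrated away (in the cancelling-pair example this mechanism is what replaces $\tilde{\Lambda}$ by $\Lambda$), not a property of the individual beads. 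Your crossing analysis could be kept as a substitute for the paper's treatment of crossings, but for the $1$-handles the generator-by-generator contraction, or an equivalent reorganization of the bead presentation, is indispensable; without it the proposed proof does not go through.
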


\begin{proof}
 The claim follows from \cite[Theorem~4.7.5]{BP11} and Proposition~\ref{P:3-modular}. Indeed, every Kirby diagram can be realized as the composition of tensor products of generating morphisms appearing in the definition of the Kirby tangle presentation functor $K : \Algf \to \KTan$ of \cite[Section~6.3]{BD21}. Then, it is sufficient to check that, for each of these generating morphisms, the algorithm defining $J_4 : \KTan \to \mods{\tilde{U}}$ determines elements of $U$. This corresponds to the computations of Proposition~\ref{P:3-modular}, and these formulas can be established like in the proofs of \cite[Lemmas~8.1 \& 8.3]{BD21}. For instance, the coproduct gives
 \[
  \pic{proof_1} = \pic{proof_2}
 \]
 Then the claim follows from the chain of equalities
 \begin{align*}
  &\lambda(S^{-1}(\Lambda_{(1)}) x 1_{\alpha+\beta}) \Lambda_{(2)}S(\tilde{D}'' \Theta'') 1_\alpha \otimes ((\tilde{D}' \Theta') \triangleright \Lambda_{(3)}) 1_\beta \\*
  &\hspace*{\parindent} = \lambda(x 1_{\alpha+\beta} S(\Lambda_{(1)})) \Lambda_{(2)}S(\tilde{D}'' \Theta'') 1_\alpha \otimes ((\tilde{D}' \Theta') \triangleright \Lambda_{(3)}) 1_\beta \\*
  &\hspace*{\parindent} = (x 1_{\alpha+\beta})_{(1)} S(\tilde{D}'' \Theta'') 1_\alpha \otimes ((\tilde{D}' \Theta') \triangleright (x 1_{\alpha+\beta})_{(2)}) 1_\beta \\*
  &\hspace*{\parindent} = x_{(1)} S(\Theta'') K^{-\lvert \Theta' \triangleright x_{(2)} \rvert} 1_\alpha \otimes (\Theta' \triangleright x_{(2)}) 1_\beta.
 \end{align*}
 where the first equality follows from \cite[Theorem~10.5.4.(e)]{R12}, while the second one follows from \cite[Theorem~10.2.2.(c)]{R12}. Similarly, the antipode gives
 \[
  \pic{proof_3} = \pic{proof_4}
 \]
 Then the claim follows from the chain of equalities
 \begin{align*}
  &\lambda(S^{-1}(\Lambda_{(1)}) x 1_\alpha) \tilde{D}'' \Theta'' S((\tilde{D}' \Theta') \triangleright \Lambda_{(2)}) 1_\alpha \\*
  &\hspace*{\parindent} = \lambda(x 1_\alpha S(\Lambda_{(1)})) \tilde{D}'' \Theta'' S((\tilde{D}' \Theta') \triangleright \Lambda_{(2)}) 1_\alpha \\*
  &\hspace*{\parindent} = \tilde{D}'' \Theta'' S((\tilde{D}' \Theta') \triangleright (x 1_\alpha)) 1_\alpha \\*
  &\hspace*{\parindent} = K^{\lvert \Theta' \triangleright x \rvert} \Theta'' S(\Theta' \triangleright x) 1_\alpha. \qedhere
 \end{align*}
\end{proof}

Because of Proposition~\ref{P:restricted}, we will use the notation $J_U$ for the invariant given by Theorem~\ref{T:main} with $H = \tilde{U}$.

\section{Relation between invariants for restricted and small quantum \texorpdfstring{$\fsl_2$}{sl(2)}}

In this section, we prove that the non-refined invariant associated with the small quantum group of $\fsl_2$ coincides with the refined invariant associated with the restricted quantum group of $\fsl_2$ for the trivial cohomology class. In order to do this, let us set
\[
 p' := \frac{p}{\gcd(p,2)}.
\]
The \textit{small quantum group\index{quantum sl(2)}\index{quantum sl(2)!small} $\bar{U} = \bar{U}_q \fsl_2$} is defined as the algebra over $\bbC$ with generators $\{ \bar{E},\bar{F},\bar{K} \}$ and relations
\begin{align*}
 \bar{E}^p &= 0, & \bar{F}^p &= 0, &
 \bar{K}^p &= 1, \\
 \bar{K} \bar{E} \bar{K}^{-1} &= q^2 \bar{E}, & \bar{K} \bar{F} \bar{K}^{-1} &= q^{-2} \bar{F}, &
 [\bar{E},\bar{F}] &= \frac{\bar{K} - \bar{K}^{-1}}{q-q^{-1}}.
\end{align*}
We identify $\bar{U}$ with a Hopf subalgebra of $U$ by setting
\begin{align*}
 \bar{E} &= E 1_0, &
 \bar{F} &= F 1_0, &
 \bar{K} &= K 1_0.
\end{align*}
As explained in \cite[Section~9.2]{BD21}, $\bar{U}$ is a unimodular ribbon Hopf algebra, and it is factorizable if and only if $p \not\equiv 0 \pmod 2$. Notice that the R-matrix $\bar{R} \in \bar{U} \otimes \bar{U}$ is given by the product $\bar{R} = \bar{D} \bar{\Theta}$, where the quasi-R-matrix $\bar{\Theta} = \bar{\Theta}' \otimes \bar{\Theta}'' \in \bar{U} \otimes \bar{U}$ is given by
\begin{align}\label{E:small-quasi-R-matrix}
 \bar{\Theta} = \Theta' 1_0 \otimes \Theta'' 1_0,
\end{align}
while the diagonal Cartan part $\bar{D} = \bar{D}' \otimes \bar{D}'' \in \bar{U} \otimes \bar{U}$ is given by
\begin{align}\label{E:small-diagonal-component}
 \bar{D} &= \frac{1}{p} \sum_{a,b=0}^{p-1} q^{-2ab} \bar{K}^a \otimes \bar{K}^b.
\end{align}
Let us now restrict our attention to the case $p \equiv 0 \pmod 2$. The ribbon element and its inverse $\bar{v}_+,\bar{v}_- \in \bar{U}$ are given for $p \equiv 2 \pmod 2$ by
\begin{align}
 \bar{v}_+ 
 &= \frac{i^{\frac{p'-1}{2}}}{\sqrt{p'}}
 \sum_{a=0}^{p-1} \sum_{b=0}^{p'-1} \frac{\{ -1 \}^a}{[a]!}
 q^{-\frac{(a+3)a}{2} + \frac{(p'+1)^3}{2} (2b-1)^2} \bar{F}^a \bar{E}^a \bar{K}^{-a-2b}, \label{E:ribbon_small_1} \\*
 \bar{v}_- 
 &= \frac{i^{-\frac{p'-1}{2}}}{\sqrt{p'}} \sum_{a=0}^{p-1} \sum_{b=0}^{p'-1} \frac{\{ 1 \}^a}{[a]!}
 q^{\frac{(a+3)a}{2} - \frac{(p'+1)^3}{2} (2b-1)^2} \bar{F}^a \bar{E}^a \bar{K}^{a+2b}, \label{E:inverse_ribbon_small_1}
\end{align}
and for $p \equiv 0 \pmod 4$ by
\begin{align}
 \bar{v}_+ 
 &= \frac{1-i}{\sqrt{p}} 
 \sum_{a=0}^{p-1} \sum_{b=0}^{p'-1} \frac{\{ -1 \}^a}{[a]!} q^{-\frac{(a+3)a}{2} + 2b^2} \bar{F}^a \bar{E}^a \bar{K}^{-a-2b-1}, \label{E:ribbon_small_2} \\*
 \bar{v}_- &= 
 \frac{1+i}{\sqrt{p}} 
 \sum_{a=0}^{p-1} \sum_{b=0}^{p'-1} \frac{\{ 1 \}^a}{[a]!} q^{\frac{(a+3)a}{2} - 2b^2} \bar{F}^a \bar{E}^a \bar{K}^{a+2b+1}, \label{E:inverse_ribbon_small_2}
\end{align}
as shown in \cite[Lemma~B.2, Equations~(B.10)--(B.13)]{BD21}. Similarly, the M-matrix and its inverse $\bar{M}_+ = \bar{M}'_+ \otimes \bar{M}''_+$, $\bar{M}_- = \bar{M}'_- \otimes \bar{M}''_- \in \bar{U} \otimes \bar{U}$ are given by
\begin{align}
  \bar{M} &=
  \frac{1}{p'} \sum_{a,b=0}^{p-1} \sum_{c,d=0}^{p'-1} \frac{\{ 1 \}^{a+b}}{[a]![b]!} \nonumber \\*
  &\hspace*{\parindent} q^{\frac{a(a-1)+b(b-1)}{2} - 2b^2 - 4cd} \bar{K}^{-b+2c} \bar{F}^b \bar{E}^a \otimes \bar{K}^{b+2d} \bar{E}^b \bar{F}^a, \label{E:M_small_1} \\
  \bar{M}^{-1} &=
  \frac{1}{p'} \sum_{a,b=0}^{p-1} \sum_{c,d=0}^{p'-1} \frac{\{ -1 \}^{a+b}}{[a]![b]!} \nonumber \\*
  &\hspace*{\parindent} q^{\frac{a(a-1)+b(b-1)}{2} + 2b^2 + 4cd} \bar{E}^a \bar{F}^b \bar{K}^{-b+2c} \otimes \bar{F}^a \bar{E}^b \bar{K}^{b+2d}. \label{E:M_inverse_small_1}
 \end{align}
as shown in \cite[Lemma~B.3, Equations~(B.31)--(B.32)]{BD21}. We slightly change our conventions with respect to those of \cite[Section~9.2]{BD21}, and fix our preferred non-zero left integral $\bar{\lambda}$ of $\bar{U}_q \fsl_2$ to be
\begin{equation}\label{E:small_integral}
 \bar{\lambda} \left( \bar{E}^a \bar{F}^b \bar{K}^c \right) = 
 \frac{\sqrt{p'} [p-1]!}{i^{p-1} \{ 1 \}^{p-1}}
 \delta_{a,p-1} \delta_{b,p-1} \delta_{c,p-1},
\end{equation}
while our preferred non-zero two-sided cointegral $\bar{\Lambda}$ of $\bar{U}_q \fsl_2$ satisfying $\lambda(\Lambda) = 1$ is
\begin{equation}\label{E:small_cointegral}
 \bar{\Lambda} := 
 \frac{i^{p-1} \{ 1 \}^{p-1}}{\sqrt{p'} [p-1]!}
 \sum_{a=0}^{p-1} \bar{E}^{p-1} \bar{F}^{p-1} \bar{K}^a.
\end{equation}
See Appendix~\ref{A:rescaling} for an explanation of the effect of this change of conventions, and for why the current ones are more natural than the ones of \cite{BD21}.

\begin{proposition}\label{P:restricted_vs_small}
 For the zero cohomology class, the refined invariant $J_U$ satisfies
 \[
  J_U(W,0) = J_{\bar{U}} (W),
 \]
 where $J_{\bar{U}}$ denotes the invariant associated with the small quantum group $\bar{U}$.
\end{proposition}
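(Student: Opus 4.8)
The plan is to use the identification of the small quantum group $\bar{U}$ with the degree-zero part $U 1_0$ of the restricted quantum group, and to show that for the trivial cohomology class the bead-presentation algorithm of Theorem~\ref{T:main} takes place entirely inside $\bar{U}$ and reproduces the one computing $J_{\bar{U}}$. Concretely, I would fix a $\Z/2\Z$-Kirby diagram $D$ representing $W$ and observe that $\omega = 0$ labels every undotted component by $0 \in \Z/2\Z$, so that every strand carries degree $0$. By Proposition~\ref{P:restricted} the resulting bead presentation $x_1 \otimes \ldots \otimes x_k$ then lies in $U_0 \otimes \ldots \otimes U_0$, and under the Hopf-algebra isomorphism $\bar{U} \cong U 1_0$ determined by $\bar{E} = E 1_0$, $\bar{F} = F 1_0$, $\bar{K} = K 1_0$ we may regard each $x_i$ as an element of $\bar{U}$. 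Moreover, by Proposition~\ref{P:3-modular} and its proof, these beads are exactly the ones produced by the transmutation structure morphisms of $\myuline{U}$, evaluated in degree zero.

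It then remains to identify this degree-zero data with the ribbon structure of $\bar{U}$ that computes $J_{\bar{U}}$. Under $\bar{U} \cong U 1_0$ I would check that: the pivotal element matches, since $g 1_0 = K^{p+1} 1_0 = \bar{K}^{p+1} = \bar{K}$ by Equation~\eqref{E:pivotal_element} together with $\bar{K}^p = 1_0$; the quasi-R-matrix matches, since $\Theta' 1_0 \otimes \Theta'' 1_0 = \bar{\Theta}$ by Equation~\eqref{E:small-quasi-R-matrix}; the M-matrix matches, since multiplying Equations~\eqref{E:M-matrix}--\eqref{E:inverse_M-matrix} by $1_0 \otimes 1_0$ reproduces Equations~\eqref{E:M_small_1}--\eqref{E:M_inverse_small_1}, the collapse of the Cartan sums being governed by $\bar{K}^p = 1_0$; and the integral and cointegral match, since Equations~\eqref{E:integral} and \eqref{E:cointegral} restrict to Equations~\eqref{E:small_integral} and \eqref{E:small_cointegral} once the sum $\sum_{a=0}^{2p-1}$ is collapsed to $2 \sum_{a=0}^{p-1}$ and the rescaling relating $\sqrt{2p}$ to $\sqrt{p'}$ is taken into account. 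The key point making the coproduct and antipode of $\myuline{U}$ restrict correctly, despite the visibly different diagonal Cartan parts $\tilde{D}$ and $\bar{D}$, is Lemma~\ref{L:adjoint_action_of_D}: only the adjoint action of $\tilde{D}$ on $U$ enters the transmutation formulas of Proposition~\ref{P:3-modular}, and that action agrees with the one of $\bar{D}$.

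Having matched all the structure data, I would conclude that the bead presentation underlying $J_U(W,0)$ coincides term-by-term with the one computed from $D$ for $J_{\bar{U}}(W)$, and that the two evaluations agree because $\lambda_0(\,\cdot\, g_0^{-1}) = \bar{\lambda}(\,\cdot\, \bar{g}^{-1})$ on $\bar{U}$; hence $J_U(W,0) = \prod_{i=1}^k \lambda_0(x_i g_0^{-1}) = \prod_{i=1}^k \bar{\lambda}(x_i \bar{g}^{-1}) = J_{\bar{U}}(W)$. I expect the main obstacle to be precisely the bookkeeping in this comparison: since $\tilde{D}$ is a sum over $4p$ terms while $\bar{D}$ is a sum over $p$ terms, none of these identifications is literal, and each must be routed through the adjoint action of Lemma~\ref{L:adjoint_action_of_D} together with the integral and cointegral normalizations recorded in Appendix~\ref{A:rescaling}, which were chosen exactly so that this identity holds on the nose.
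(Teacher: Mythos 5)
Your strategy coincides with the paper's: identify $\bar{U}$ with $U 1_0$, reduce to comparing the transmutation data of $\myuline{U}$ in degree zero with that of $\myuline{\bar{U}}$, and use Lemma~\ref{L:adjoint_action_of_D} (together with its verbatim analogue for $\bar{D}$) to neutralize the mismatch between the Cartan parts $\tilde{D}$ and $\bar{D}$; the items you list (pivot, quasi-R-matrix, M-matrix/copairing, integral, cointegral, coproduct, antipode) are indeed checked in the paper essentially as you describe. However, your list omits one piece of data that is logically independent of all the others: the ribbon elements. In the presentation of $\RHB$ through which both invariants factor, the morphisms $\myuline{v_\pm} : \bbC \to \myuline{U}$ are generators in their own right (they are what a kink, i.e.\ a $\pm 1$-framed self-crossing, contributes after collecting beads), so the comparison also requires $\bar{v}_\pm = v_\pm 1_0$. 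This does not follow from matching $\Theta$, $g$, and $M_+$: the honest R-matrices do not match, since $\tilde{R}(1_0 \otimes 1_0)$ involves $\tilde{D}$ and does not even lie in $\bar{U} \otimes \bar{U}$, and $v_\pm$ is built from the Drinfeld element $u = S(\tilde{R}'')\tilde{R}'$, which cannot be recovered by any formal manipulation from $M_+ = R_{21}R$. Concretely, $u$ involves $S(\tilde{D}'')\tilde{D}'$, a sum of Gauss-type coefficients times powers of $\tilde{K}$, and its degree-zero part has no a priori reason to agree with the corresponding expression formed from $\bar{D}$ and $\bar{\Theta}$; already for $W$ the $4$-ball with a single $2$-handle attached along a $\pm 1$-framed unknot, the two algorithms return integrals of the two ribbon elements, and nothing in your list forces these to agree.

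In the paper this is one of the two substantive computations of the proof: the identity $\bar{v}_+ = v_+ 1_0$ is obtained by comparing Equations~\eqref{E:ribbon_small_1}--\eqref{E:inverse_ribbon_small_2} with the graded formulas of Lemma~\ref{L:graded_ribbon}, and for $p \equiv 2 \pmod 4$ it requires the root-of-unity identity
\[
 i^{\frac{p'-1}{2}} q^{\frac{(p'+1)^3}{2}(2b-1)^2} = \frac{1-i}{\sqrt{2}}\, t^{(2b-1)^2},
\]
which is not a routine collapse of Cartan sums of the kind you invoke for the M-matrix, the integral, and the cointegral. Once you add this verification (as a step parallel to your M-matrix comparison), the rest of your plan is correct and closes the argument.
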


\begin{proof}
 Since $\bar{U}$ is a unimodular ribbon Hopf algebra, \cite[Proposition~7.3]{BD21} implies that $\myuline{\bar{U}} \in \mods{\bar{U}}$ is a BPH algebra. Structure morphisms of $\myuline{\bar{U}}$ are given, for all $x,y \in \myuline{\bar{U}}$, by
 \begin{align*}
  \myuline{\bar{\mu}}(x \otimes y) &= xy, &
  \myuline{\bar{\eta}}(1) &= 1, \\*
  \myuline{\bar{\Delta}}(x) &= x_{(1)}S(\bar{D}''\bar{\Theta}'') \otimes ((\bar{D}'\bar{\Theta}') \triangleright x_{(2)}), &
  \myuline{\bar{\varepsilon}}(x) &= \varepsilon(x), \\*
  \myuline{\bar{S}}(x) &= \bar{D}''\bar{\Theta}'' S((\bar{D}'\bar{\Theta}') \triangleright x), &
  \myuline{\bar{S}^{-1}}(x) &= S^{-1}((\bar{D}'\bar{\Theta}') \triangleright x) \bar{D}''\bar{\Theta}'', \\
  \myuline{\bar{v}_+}(1) &= \bar{v}_+, &
  \myuline{\bar{v}_-}(1) &= \bar{v}_-, \\*
  \myuline{\bar{w}_+}(1) &= S(\bar{M}'_+) \otimes \bar{M}''_+, &
  \myuline{\bar{w}_-}(1) &= S(\bar{M}'_-) \otimes \bar{M}''_-, \\
  \myuline{\bar{\lambda}}(x) &= \bar{\lambda}(x), &
  \myuline{\bar{\Lambda}}(1) &= \bar{\Lambda}.
 \end{align*}

 First of all, we claim that
 \begin{align*}
  \myuline{\bar{\Delta}}(x 1_0) &= \myuline{\Delta}(x)(1_0 \otimes 1_0), &
  \myuline{\bar{S}}(x 1_0) &= \myuline{S}(x)1_0
 \end{align*}
 for every $x \in U$. Indeed, just like for the restricted quantum group, a grading
 \[
  \bar{U} = \bigoplus_{n=-p+1}^{p-1} \Gamma_n(\bar{U})
 \]
 is obtained by setting $\Gamma_n(\bar{U})$ to be the linear subspace of $\bar{U}$ with basis
 \[
  \{ \bar{E}^a \bar{F}^b \bar{K}^c \mid 0 \leqs a,b,c \leqs p-1, a-b=n \},
 \]
 for every $-p+1 \leqs n \leqs p-1$. As usual, if $x \in \Gamma_n(\bar{U})$, then we say $x$ is homogeneous of degree $n$, and we write $\lvert x \rvert = n$. Then, for all homogeneous $x,y \in \bar{U}$, we have
 \begin{align*}
  (\bar{D}' \triangleright x) \otimes \bar{D}'' &= x \otimes \bar{K}^{\lvert x \rvert}, \\*
  (\bar{D}' \triangleright x) \otimes (\bar{D}'' \triangleright y) &= q^{2 \lvert x \rvert \lvert y \rvert} x \otimes y. 
 \end{align*}
 This can be shown exactly like Lemma~\ref{L:adjoint_action_of_D}, and gives a proof of the claim.

 Next, we we claim that
 \begin{align*}
  \bar{v}_+ &= v_+ 1_0, &
  \bar{w}_+ &= w_+ (1_0 \otimes 1_0).
 \end{align*}
 Indeed, on the one hand, the equality for the ribbon element is obtained by comparing Equations~\eqref{E:ribbon_small_1} \& \eqref{E:ribbon_small_2} with Equations~\eqref{E:graded_ribbon_0_1} \& \eqref{E:graded_ribbon_0_2}. For $p \equiv 2 \pmod 4$ we have
 \begin{align*}
  i &= q^{p'}, &
  q^{\frac{(p'+1)^3}{2}} &= q^{-\frac{p'^2-1}{2}}, &
  \frac{1-i}{\sqrt{2}} = t^{-p'}.
 \end{align*}
 Therefore, the equality follows from
 \begin{align*}
  i^{\frac{p'-1}{2}} q^{\frac{(p'+1)^3}{2} (2b-1)^2}
  &= q^{\frac{p'(p'-1)}{2} - \frac{p'^2-1}{2} (2b-1)^2} \\*
  &= t^{p'^2-p' - (p'^2-1)(2b-1)^2} \\*
  &= t^{(1-(2b-1)^2)p'^2 - p' + (2b-1)^2} \\*
  &= t^{-4b(b-1)p'^2} t^{-p'} t^{(2b-1)^2} \\*
  &= \frac{1-i}{\sqrt{2}} t^{(2b-1)^2}.
 \end{align*}
 For $p \equiv 0 \pmod 4$, the equality is clear. On the other hand, the equality for the copairing is obtained by computing
 \begin{align*}
  w_+ 
  &= \frac{1}{p'} 
  \sum_{a,b=0}^{p-1} \sum_{c,d=0}^{p'-1} \frac{\{ 1 \}^{a+b}}{[a]![b]!} q^{\frac{a(a-1)+b(b-1)}{2} - 2b^2 - 4cd} S(K^{-b+2c} F^b E^a) \otimes K^{b+2d} E^b F^a \\*
  &= \frac{1}{p'} 
  \sum_{a,b=0}^{p-1} \sum_{c,d=0}^{p'-1} \frac{\{ -1 \}^{a+b}}{[a]![b]!} q^{\frac{a(a-1)+b(b-1)}{2} - (a-b)(a-b-1) - 2b^2 - 4cd} \\*
  &\hspace*{\parindent} E^a F^b K^{-a+2b-2c} \otimes K^{b+2d} E^b F^a \\*
  &= \frac{1}{p'} 
  \sum_{a,b=0}^{p-1} \sum_{c,d=0}^{p'-1} \frac{\{ -1 \}^{a+b}}{[a]![b]!} q^{\frac{a(a-1)+b(b-1)}{2} - (a-b)(a-b-1) - 2(a-b)(b+2d) - 2b^2 - 4cd} \\*
  &\hspace*{\parindent} E^a F^b K^{-a+2b-2c} \otimes E^b F^a K^{b+2d} \\*
  &= \frac{1}{p'} 
  \sum_{a,b=0}^{p-1} \sum_{c,d=0}^{p'-1} \frac{\{ -1 \}^{a+b}}{[a]![b]!} q^{\frac{a(a-1)+b(b-1)}{2} - (a-b)(a-b-1) - 2(a-b)(b+2d) - 2b^2 + 4(a-b+c)d} \\*
  &\hspace*{\parindent} E^a F^b K^{a+2c} \otimes E^b F^a K^{b+2d} \\*
  &= \frac{1}{p'} 
  \sum_{a,b=0}^{p-1} \sum_{c,d=0}^{p'-1} \frac{\{ -1 \}^{a+b}}{[a]![b]!} q^{-\frac{a(a-1)+b(b-1)}{2} - 2b + 4cd} E^a F^b K^{a+2c} \otimes E^b F^a K^{b+2d},
 \end{align*}
 and comparing with Equation~\eqref{E:graded_copairing_0_0}.

 Finally, we have
 \begin{align*}
  \bar{\lambda}(x 1_0) &= \lambda(x 1_0), &
  \bar{\Lambda} &= \Lambda 1_0
 \end{align*}
 for every $x \in U$. Indeed, we have
 \begin{align*}
  \bar{\lambda}(E^a F^b K^c 1_0) &= \frac{\sqrt{p'} [p-1]!}{i^{p-1} \{ 1 \}^{p-1}}
 \delta_{a,p-1} \delta_{b,p-1} \left( \delta_{c,p-1} + \delta_{c,2p-1} \right), \\*
  \lambda(E^a F^b K^c 1_0) &= \frac{\sqrt{2p} [p-1]!}{i^{p-1} \{ 1 \}^{p-1}}
 \delta_{a,p-1} \delta_{b,p-1} \left( \frac{\delta_{c,p-1} + \delta_{c,2p-1}}{2} \right)
 \end{align*}
 for all integers $0 \leqs a,b \leqs p-1$ and $0 \leqs c \leqs 2p-1$.
\end{proof}

\section{Spin and cohomological decomposition formulas}\label{S:decompositions}

In this section, we derive a decomposition formula for both the refined and the non-refined invariants associated with the restricted quantum group $U$ in terms of refined invariants of \dmnsnl{3} boundaries equipped with additional structures. Recall that a \textit{spin structure}\index{spin structure} on a connected \dmnsnl{n} \hndlbd{k}
\[
 D^n = X_0 \subset X_1 \subset X_2 \subset \ldots \subset X_k = X
\]
can be defined as the fiber-homotopy class of a trivialization of the tangent bundle $T X_2$ of the \hndlbd{2} $X_2$. A spin structure on $X$ exists if and only if the second Stiefel--Whitney class $w_2(X) \in H^2(X;\Z/2\Z)$ vanishes. By definition, $w_2(X)$ is the cohomology class of a cocycle $c(\tau)$ that is constructed as follows: first, we pick an arbitrary trivialization $\tau$ of the tangent bundle $TX_1$ of the \hndlbd{1} $X_1$, which exists because $X$ is oriented; next, for every $2$-han\-dle in $X_2$, we use the corresponding attaching map in order to compare $\tau$ with the unique trivialization of the tangent bundle of $D^2 \times D^{n-2}$ up to fiber-homotopy; this allows us to associate with every $2$-han\-dle in $X_2$ the obstruction to extending $\tau$, which is an element in $\pi_1(\SO(n)) \cong \Z/2\Z$ (for $n > 2$). Therefore, $X$ admits a spin structure if and only if $c(\tau)$ is a coboundary for some choice of $\tau$, in which case it is a coboundary for all possible choices of $\tau$, see \cite[Section~5.6]{GS99} for more details. If $w_2(X) = 0$, then the set $\calS(X)$ of spin structures on $X$ is affinely isomorphic to the vector space 
\[
 \calH(X) := H^1(X;\Z/2\Z),
\]
since every pair $s,s' \in \calS(X)$ of spin structures on $X$ determines a \textit{difference class} $\Delta(s,s') \in \calH(X)$. Indeed, up to fiber-homotopy, we can suppose that $s$ and $s'$ coincide on the \hndlbd{0} $D^n = X_0$, and we obtain a difference cochain $d(s,s')$ inside $C^1(X;\pi_1(\SO(n))) \cong C^1(X;\Z/2\Z)$ by comparing the two restrictions to the \hndlbd{1} $X_1$. This is a cocycle because both $s$ and $s'$ extend to $X_2$, and a different choice for the fiber-homotopy ensuring that $s$ and $s'$ coincide on $X_0$ only affects $d(s,s')$ by a coboundary, see \cite[Section~5.6]{GS99}.

If $Y$ is a $(n-1)$-manifold with a spin structure $s \in \calS(Y)$, then a relative \dmnsnl{n} \hndlbd{k}
\[
 Y \times I = X_0 \subset X_1 \subset X_2 \subset \ldots \subset X_k = X
\]
admits a spin structure extending $s$ if and only if the second relative Stiefel--Whitney class $w_2(X,s) \in H^2(X,Y;\Z/2\Z)$ vanishes. The definition of $w_2(X,s)$ is analogous to that of $w_2(X)$, although this time the trivialization $\tau$ of the tangent bundle $TX_1$ of the \hndlbd{1} $X_1$ is required to extend $s$.

Every \mnfld{3} admits a spin structure, because every \mnfld{3} is parallelizable, see for instance \cite{BL18}. Notice however that there exist \mnflds{4} that do not admit any spin structure.

Let $W$ be a connected \dmnsnl{4} \hndlbd{2} with boundary $\partial W = M$. For every relative cohomology class $\omega \in H^2(W,M;\Z/2\Z)$, we set
\begin{align*}
 \calS(M,\omega) &:= \{ s \in \calS(M) \mid w_2(W,s) = \omega \} \\*
 \calH(M,\omega) &:= \{ \varphi \in \calH(M) \mid \delta^*(\varphi) = \omega \}
\end{align*}
where $\delta^* : \calH(M) \to H^2(W,M;\Z/2\Z)$ denotes the coboundary homomorphism coming from the long exact sequence of the pair $(W,M)$ in cohomology with $\Z/2\Z$-coefficients.

Let $W$ be a geometrically simple connected \dmnsnl{4} \hndlbd{2} with boundary $\partial W = M$, let $L = L_1 \cup \ldots \cup L_n$ denote a Kirby diagram for $W$ featuring only $2$-han\-dles, which yields a surgery presentation of $M$. Then, every cohomology class in $\omega \in H^2(W,M;\Z/2\Z)$ can be identified with (the indicator function of) a sublink of $L$. Indeed, if $m_i \in H_2(W,M)$ denotes the relative homology class of disc providing a meridian for the tubular neighborhood of a component $L_i \subset L$, then let us set
\[
 \omega_i := \langle \omega,m_i \rangle \in \Z/2\Z.
\]
A sublink $\omega \in H^2(W,M;\Z/2\Z)$ is said to be \textit{characteristic} if it satisfies the equation
\[
 \lk (L_i,\omega) := \sum_{j=1}^n \omega_j \lk(L_i,L_j) \equiv \lk(L_i,L_i) \pmod 2
\]
for every $1 \leqs i \leqs n$, while it is said to be \textit{even} if it satisfies the equation
\[
 \lk (L_i,\omega) := \sum_{j=1}^n \omega_j \lk(L_i,L_j) \equiv 0 \pmod 2
\]
for every $1 \leqs i \leqs n$. On the one hand, the map
\begin{align*}
 \calS(M) &\to H^2(W,M;\Z/2\Z) \\*
 s &\mapsto w_2(W,s) 
\end{align*}
defines a bijection between $\calS(M)$ and the set of characteristic sublinks of $L$, as explained in \cite[Section~5.7.11]{GS99}. In other words, for every characteristic sublink $\omega \in H^2(W,M;\Z/2\Z)$ there exists a unique spin structure $s \in \calS(M)$ such that $w_2(W,s) = \omega$. As explained above, the characteristic class $w_2(W,s)$ measures the obstruction to extending $s$ from $M$ to $W$. In particular, we have that $W$ is a spin \mnfld{4} if and only if the empty sublink $0 \in H^2(W,M;\Z/2\Z)$ is a characteristic sublink. On the other hand, the coboundary homomorphism
\begin{align*}
 \calH(M) &\to H^2(W,M;\Z/2\Z) \\*
 \varphi &\mapsto \delta^*(\varphi)
\end{align*}
coming from the long exact sequence of the pair $(W,M)$ in cohomology with $\Z/2\Z$-coefficients defines a bijection between $\calH(M)$ and the set of even sublinks of $L$. Notice that, when $W$ is not geometrically simply connected, then neither of these maps is injective in general. For instance, if $W = S^1 \times D^3$ and $M = S^1 \times S^2$, then $H^2(W,M;\Z/2\Z) = 0$ while $H^1(M;\Z/2\Z) \cong \Z/2\Z$.

\begin{theorem}\label{T:decomposition}
 Let $W$ be a connected \dmnsnl{4} \hndlbd{2} with boundary $\partial W = M$, let $D$ be a Kirby diagram for $W$, let $L$ be the framed link obtained from $D$ by trading $1$-han\-dles for $2$-han\-dles, let $\sigma$ be its signature, and let $E$ be the \dmnsnl{4} \hndlbd{2} represented by $L$. 
 \begin{enumerate}
  \item If $p \equiv 0 \pmod 4$, then for every spin structure $s \in \calS(M)$ the scalar
   \begin{equation}
    J_U(M,s) := \lambda(v_+ 1_1)^\sigma J_U(E,w_2(E,s))
   \end{equation}
   is a topological invariant of the pair $(M,s)$, for every relative cohomology class $\omega \in H^2(W,M;\Z/2\Z)$ we have
   \begin{align}
    J_U(W,\omega) &= \lambda(v_- 1_1)^\sigma \sum_{s \in \calS(M,\omega)} J_U(M,s), \label{E:spin_decomposition_omega}
   \end{align}
   and for the non-refined invariant we have
   \begin{align}
    J_U(W) &= \lambda(v_- 1_1)^\sigma \sum_{s \in \calS(M)} J_U(M,s) \label{E:spin_decomposition} ; 
   \end{align}
  \item If $p \equiv 2 \pmod 4$, then for every cohomology class $\varphi \in \calH(M)$ the scalar
   \begin{equation}
    J_U(M,\varphi) := \lambda(v_+ 1_0)^\sigma J_U(E,\delta^*(\varphi))
   \end{equation}
   is a topological invariant of the pair $(M,\varphi)$, for every relative cohomology class $\omega \in H^2(W,M;\Z/2\Z)$ we have
   \begin{align}
    J_U(W,\omega) &= \lambda(v_- 1_0)^\sigma \sum_{\varphi \in \calH(M,\omega)} J_U(M,\varphi), \label{E:cohomological_decomposition_omega}
   \end{align}
   and for the non-refined invariant we have
   \begin{align}
    J_U(W) &= \lambda(v_- 1_0)^\sigma \sum_{\varphi \in \calH(M)} J_U(M,\varphi). \label{E:cohomological_decomposition}
   \end{align}
 \end{enumerate}
\end{theorem}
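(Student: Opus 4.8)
The plan is to trade all $1$-handles for $2$-handles, analyze the resulting $\Z/2\Z$-labeled surgery link, and match it with the spin (resp. cohomological) refinement of the associated \mnfld{3} invariant. Since $U$ is factorizable, Proposition~\ref{P:general_decomposition} applies and gives
\[
 J_U(W,\omega) = \sum_{\psi \in \calE(E,\omega)} J_U(E,\psi),
\]
reducing everything to the geometrically simply connected handlebody $E$ presented by the framed link $L = L_1 \cup \cdots \cup L_n$. As recalled before the statement, a class $\psi \in H^2(E,M;\Z/2\Z)$ is the same datum as a sublink of $L$, recorded by $\psi_i = \langle \psi, m_i \rangle$, and the constraint $\psi \circ \iota = \omega$ fixes the labels on the components of $L$ coming from the undotted part of $D$ while leaving free those coming from the traded dotted part.

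First I would isolate the framing anomaly. A split $\pm 1$-framed unknot carrying the label $\alpha \in \Z/2\Z$ is unlinked from the rest of $L$, so its bead contributes the scalar $\lambda(v_\mp 1_\alpha)$ while changing the signature $\sigma$ by $\pm 1$. Evaluating \eqref{E:ribbon_element}--\eqref{E:inverse_ribbon_element} against the integral in each degree, together with the identity
\[
 \lambda(v_- 1_0)\lambda(v_+ 1_0) + \lambda(v_- 1_1)\lambda(v_+ 1_1) = 1
\]
from the Remark after Proposition~\ref{P:general_decomposition}, shows that for $p \equiv 0 \pmod 4$ only the degree-$1$ scalars $\lambda(v_\pm 1_1)$ are nonzero and mutually inverse, whereas for $p \equiv 2 \pmod 4$ only the degree-$0$ scalars $\lambda(v_\pm 1_0)$ are. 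This simultaneously pins down the normalization and proves that $\lambda(v_+ 1_1)^\sigma J_U(E, w_2(E,s))$ (resp.\ $\lambda(v_+ 1_0)^\sigma J_U(E, \delta^*(\varphi))$) is invariant under Kirby moves, hence a topological invariant of $(M,s)$ (resp.\ $(M,\varphi)$): handle slides leave both $J_U(E,-)$ and $\sigma$ unchanged, while a stabilization attaches the new unknot with its forced label and is absorbed by the inverse relation just established.

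The crux, which I expect to be the main obstacle, is to prove that $J_U(E,\psi)$ vanishes unless $\psi$ is a characteristic (resp.\ even) sublink. Here I would use that in the $\Z/2\Z$-coalgebra $U$ the central element $K^p$ acts by $(-1)^{\psi_i}$ on the degree-$\psi_i$ summand carried by $L_i$, so that the Cartan part $\tilde{D}$ of the R-matrix turns each crossing between $L_i$ and $L_j$ into a sign, and summing the Cartan variables produces, for each $i$, a Gauss sum supported on a single linear condition $\sum_j \psi_j \lk(L_i,L_j) \equiv \epsilon_i \pmod 2$. The inhomogeneous term $\epsilon_i$ is governed by the self-framing contribution, that is, by the parity of the power of $K$ in the ribbon element: it equals $\lk(L_i,L_i)$ in the $p \equiv 0 \pmod 4$ case, where the relevant ribbon element \eqref{E:ribbon_small_2}--\eqref{E:inverse_ribbon_small_2} carries an odd power of $K$, and it vanishes in the $p \equiv 2 \pmod 4$ case, where \eqref{E:ribbon_small_1}--\eqref{E:inverse_ribbon_small_1} carries an even power. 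Thus $J_U(E,\psi) = 0$ unless $\psi$ is characteristic when $p \equiv 0 \pmod 4$, and unless $\psi$ is even when $p \equiv 2 \pmod 4$; verifying that the surviving Gauss sums compute to exactly the degree scalars of the previous paragraph is the computational heart of the argument.

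It then remains to translate sublinks into geometry. I would invoke the two bijections recalled before the statement: $s \mapsto w_2(E,s)$ identifies $\calS(M)$ with the characteristic sublinks of $L$, and $\varphi \mapsto \delta^*(\varphi)$ identifies $\calH(M)$ with the even sublinks. Since $w_2(W,s) = \iota^* w_2(E,s)$, and $\delta^*$ is compatible with restriction along $\iota$, the characteristic (resp.\ even) classes lying in $\calE(E,\omega)$ are precisely $\{ w_2(E,s) \mid s \in \calS(M,\omega)\}$ (resp.\ $\{ \delta^*(\varphi) \mid \varphi \in \calH(M,\omega)\}$). Substituting into the handle-trading sum, only these terms survive, with the signature factor $\lambda(v_- 1_1)^\sigma$ (resp.\ $\lambda(v_- 1_0)^\sigma$) from the anomaly computation; this yields \eqref{E:spin_decomposition_omega} and \eqref{E:cohomological_decomposition_omega}. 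Finally, summing over all $\omega \in H^2(W,M;\Z/2\Z)$ exactly as in the proof of Proposition~\ref{P:general_decomposition} gives the non-refined formulas \eqref{E:spin_decomposition} and \eqref{E:cohomological_decomposition}.
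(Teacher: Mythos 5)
Your overall architecture coincides with the paper's proof: reduce to the geometrically simply connected handlebody $E$ via Proposition~\ref{P:general_decomposition}, show that the relevant anomaly scalars are mutually inverse ($\lambda(v_+ 1_1)\lambda(v_- 1_1) = 1$ for $p \equiv 0 \pmod 4$, $\lambda(v_+ 1_0)\lambda(v_- 1_0) = 1$ for $p \equiv 2 \pmod 4$), prove that $J_U(E,\psi)$ vanishes unless $\psi$ is a characteristic (resp.\ even) sublink, and then conclude via the two bijections and by summing over $\omega$. Your treatment of the stabilization/normalization step and the final assembly are correct and match the paper.

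The genuine gap sits exactly where you place it, at what you call the computational heart: the vanishing of $J_U(E,\psi)$ for non-characteristic (resp.\ non-even) $\psi$ is asserted but not proved, and the mechanism you propose does not work as stated in this setting. The picture of the Cartan part turning crossings into signs and producing, for each component, a Gauss sum supported on one linear condition is the classical semisimple (Kirby--Melvin/Blanchet) argument; in the Hennings-type evaluation the beads on each component also carry the nilpotent contributions $E^a F^b$ coming from the quasi-R-matrix $\Theta$, and these enter the same $\lambda$-evaluation as the Cartan powers, so summing the Cartan variables does not decouple into delta conditions times the rest. The paper's proof supplies two ingredients you are missing: first, the offending component $L_i$ is unknotted by handle slides, which localizes the analysis to the bead sitting on $L_i$ alone; second, and crucially, the vanishing is extracted not from Gauss sums but from the algebra morphism $\deg : U \to \Z/2\Z$ with $\deg(E) = \deg(K) = 1$, $\deg(F) = 0$, for which Equation~\eqref{E:integral} gives $\deg(x) = 1 \Rightarrow \lambda(x) = 0$; the degrees of the graded ribbon elements and graded copairings (Lemmas~\ref{L:graded_ribbon} and~\ref{L:graded_copairing}) then show that the bead on $L_i$ has odd degree precisely when the characteristic (resp.\ even) condition fails. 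Note also a smaller slip along these lines: the $K$-power parities you invoke should be read off the graded elements $v_\pm 1_0$ and $v_\pm 1_1$ of the restricted quantum group, not off Equations~\eqref{E:ribbon_small_1}--\eqref{E:inverse_ribbon_small_2} for $\bar{U}$; the latter only give the degree-zero parts $v_\pm 1_0 = \bar{v}_\pm$, whereas the vanishing argument equally needs the parity of $v_\pm 1_1$ for components labeled $1$.
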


\begin{remark}\label{R:to_be_precise} 
 Let us point out a few formal differences between the statement of Theorem~\ref{T:decomposition} and the one given in the Introduction. First of all, the signature $\sigma$ appearing here is the signature of the framed link $L$, so it coincides by definition with $\sigma(E)$, instead of $\sigma(W)$. Of course, this makes no difference, since trading $1$-han\-dles for $2$-han\-dles does not affect the signature. Indeed, every handle trade on a \mnfld{4} can be implemented by a \dmnsnl{5} cobordism, as explained in \cite[Remark~3.1.3]{KL01}, and the signature is a cobordism invariant. Furthermore, the signature renormalization appearing here involves the scalar $\lambda(v_- 1_1)$, if $p \equiv 0 \pmod 4$, or $\lambda(v_- 1_0)$, if $p \equiv 2 \pmod 4$, while in the Introduction we simply used $\lambda(v_-)$. However, we have $\lambda(v_-) = \lambda(v_- 1_0) + \lambda(v_- 1_1)$, and in both cases we are simply highlighting here the only non-vanishing summand.
\end{remark}

\begin{proof}[Proof of Theorem~\ref{T:decomposition}]
 Let us begin with point~$(1)$, so let us assume $p \equiv 0 \pmod 4$, and let us show that $J_U(M,s)$ is independent of the framed link $L$. Since by definition it is already invariant under Kirby II moves, meaning $2$-han\-dle slides, we only need to show that it is also invariant under Kirby I moves, meaning stabilization by $\partial \bbC P^2$ and $\partial \overline{\bbC P^2}$. In order to prove this, it is sufficient to notice that
 \[
  \lambda(v_+ 1_1) \lambda(v_- 1_1) = 1.
 \]
 This follows from Equation~\eqref{E:integral}, which combines with Equation~\eqref{E:graded_ribbon_1_2} to give
 \begin{align*}
  \lambda(v_+ 1_1) &= - \frac{\sqrt{2p} [p-1]!}{i^{p-1} \{ 1 \}^{p-1}} \frac{1-i}{\sqrt{p}} \frac{\{ -1 \}^{p-1}}{[p-1]!} \frac{q^{-\frac{(p+2)(p-1)}{2}} t}{2}
  = \frac{1-i}{\sqrt{2}} t^3,
 \end{align*}
 and with Equation~\eqref{E:graded_inverse_ribbon_1_2} to give
 \begin{align*}
  \lambda(v_- 1_1) &= - \frac{\sqrt{2p} [p-1]!}{i^{p-1} \{ 1 \}^{p-1}} \frac{1+i}{\sqrt{p}} \frac{\{ 1 \}^{p-1}}{[p-1]!} \frac{q^{\frac{(p+2)(p-1)}{2}} t^{-1}}{2}
  = \frac{1+i}{\sqrt{2}} t^{-3}.
 \end{align*}
 This establishes the first claim, because the signature of a Kirby diagram for $E \bcs (\bbC P^2 \smallsetminus \mathring{D}^4)$ is $\sigma+1$, while the signature of a Kirby diagram for $E \bcs (\overline{\bbC P^2} \smallsetminus \mathring{D}^4)$ is $\sigma-1$.

 Now, thanks to Proposition~\ref{P:general_decomposition}, since $U$ is factorizable, in order to conclude the proof of point~$(1)$, we simply need to assume that $W$ is geometrically simply connected, and to show that 
 \[
  J_U(W,\omega) = 0
 \]
 for every $\omega \in H^2(W,M;\Z/2\Z)$ which is not of the form $\omega = w_2(W,s)$ for some spin structure $s \in \calS(M)$. In other words, we need to show that 
 \[
  J_U(W,\omega) = 0
 \]
 whenever $\omega$ is not a characteristic sublink of $L$. Indeed, since for every other $\omega \in H^2(W,M;\Z/2\Z)$ there exists exactly one spin structure $s \in \calS(M)$ satisfying $w_2(W,s) = \omega$, this would imply Equation~\eqref{E:spin_decomposition_omega}, and Equation~\eqref{E:spin_decomposition} would follow from
 \[
  J_U(W) = \sum_{\omega \in H^2(W,M;\Z/2\Z)} J_U(W,\omega).
 \]
 Therefore, let us follow the structure of the proof of \cite[Theorem~III.3]{B92}, and let us fix an $\omega$ that is not a characteristic sublink. This means there exists a component $L_i \subset L$ such that
 \[
  \lk (L_i,\omega) \not\equiv \lk(L_i,L_i) \pmod 2.
 \]
 First of all, we claim that we can suppose that $L_i$ is unknotted. Indeed, by performing Kirby II moves every self crossing of $L_i$ can be changed as follows, without changing the invariant:
 \begin{align*}
  \pic{stabilization_1} \rightsquigarrow
  \pic{stabilization_3}
 \end{align*}
 Therefore, we have the following configuration:
 \[
  \pic{non-characteristic_sublink}
 \]
 In particular, we have two cases:
 \begin{enumerate}
  \item If $L_i$ does not belong to the characteristic sublink, then $\omega_i = 0$. This means we have two further sub-cases:
   \begin{enumerate}
    \item If $\lk(L_i,L_i) \equiv 0 \pmod 2$, then $\omega_{j_1} + \ldots + \omega_{j_k} = 1$.
    \item If $\lk(L_i,L_i) \equiv 1 \pmod 2$, then $\omega_{j_1} + \ldots + \omega_{j_k} = 0$.
   \end{enumerate}
  \item If $L_i$ belongs to the characteristic sublink, then $\omega_i = 1$, and therefore $\omega_{j_1} + \ldots + \omega_{j_k} = 1$.
 \end{enumerate}

 Let us define a $\Z/2\Z$-grading on $U$ given by the algebra morphism 
 \[
  \deg : U \to \Z/2\Z
 \]
 determined by
 \begin{align*}
  \deg(E) &= 1, & \deg(F) &= 0, & \deg(K) &= 1.
 \end{align*}
 Notice that this is well-defined because all the relations of $U$ are homogeneous. Notice also that
 \[
  \deg(x) = 1 \Rightarrow \lambda(x) = 0,
 \]
 thanks to Equation~\eqref{E:integral}. However
 \begin{enumerate}
  \item $\deg(v_+ 1_0) = 1$, thanks to Equation~\eqref{E:graded_ribbon_0_2};
  \item $\deg(v_+ 1_1) = 0$, thanks to Equation~\eqref{E:graded_ribbon_1_2};
  \item $(\deg(S(M'_+) 1_0),\deg(M''_+ 1_1)) = (1,0)$, thanks to Equation~\eqref{E:graded_copairing_0_1};
  \item $(\deg(S(M'_+) 1_0),\deg(M''_+ 1_0)) = (0,0)$, thanks to Equation~\eqref{E:graded_copairing_0_0};
  \item $(\deg(S(M'_+) 1_1),\deg(M''_+ 1_1)) = (1,1)$, thanks to Equation~\eqref{E:graded_copairing_1_1}.
 \end{enumerate}
 This implies immediately that
 \begin{align*}
  \lambda(S(M'_+) 1_0 v_+^{2n} 1_0) M''_+ 1_1 &= 0, \\*
  \lambda(S(M'_+) 1_0 v_+^{2n+1} 1_0) M''_+ 1_0 &= 0, \\*
  \lambda(S(M'_+) 1_1 v_+^n 1_1) M''_+ 1_1 &= 0,
 \end{align*}
 which in particular means that
 \begin{align*}
  \lambda(S(M'_+) v_+^{2n} 1_0) (M''_+)_{(1)} 1_{\omega_{j_1}} \otimes \ldots \otimes (M''_+)_{(k)} 1_{\omega_{j_k}} &= 0, \\*
  \lambda(S(M'_+) v_+^{2n+1} 1_0) (M''_+)_{(1)} 1_{\omega_{j_1}} \otimes \ldots \otimes (M''_+)_{(k)} 1_{\omega_{j_k}} &= 0, \\*
  \lambda(S(M'_+) v_+^n 1_1) (M''_+)_{(1)} 1_{\omega_{j_1}} \otimes \ldots \otimes (M''_+)_{(k)} 1_{\omega_{j_k}} &= 0.
 \end{align*}
 This concludes the proof of point~$(1)$.

 The proof of point~$(2)$ is completely analogous, but we will briefly explain what should be adapted from the previous one, so let us assume $p \equiv 2 \pmod 4$. This time, we have
 \[
  \lambda(v_+ 1_0) \lambda(v_- 1_0) = 1.
 \]
 This follows from Equation~\eqref{E:integral}, which combines with Equation~\eqref{E:graded_ribbon_0_1} to give
 \begin{align*}
  \lambda(v_+ 1_0) &= \frac{\sqrt{2p} [p-1]!}{i^{p-1} \{ 1 \}^{p-1}} \frac{1-i}{\sqrt{p}} \frac{\{ -1 \}^{p-1}}{[p-1]!} \frac{q^{-\frac{(p+2)(p-1)}{2}} t}{2}
  = - \frac{1-i}{\sqrt{2}} t^3,
 \end{align*}
 and with Equation~\eqref{E:graded_inverse_ribbon_0_1} to give
 \begin{align*}
  \lambda(v_- 1_0) &= \frac{\sqrt{2p} [p-1]!}{i^{p-1} \{ 1 \}^{p-1}} \frac{1+i}{\sqrt{p}} \frac{\{ 1 \}^{p-1}}{[p-1]!} \frac{q^{\frac{(p+2)(p-1)}{2}} t^{-1}}{2}
  = - \frac{1+i}{\sqrt{2}} t^{-3}.
 \end{align*}
 This establishes the first claim.

 Now, in order to conclude, we need to assume that $W$ is geometrically simply connected, and to show that 
 \[
  J_U(W,\omega) = 0
 \]
 for every $\omega \in H^2(W,M;\Z/2\Z)$ which is not an even sublink of $L$. For such an $\omega$, there exists a component $L_i \subset L$ such that
 \[
  \lk (L_i,\omega) \not\equiv 0 \pmod 2,
 \]
 and again we can suppose that $L_i$ is unknotted. In particular, we have two cases:
 \begin{enumerate}
  \item If $L_i$ does not belong to the even sublink, then $\omega_i = 0$, and therefore $\omega_{j_1} + \ldots + \omega_{j_k} = 1$.
  \item If $L_i$ belongs to the even sublink, then $\omega_i = 1$. This means we have two further sub-cases:
   \begin{enumerate}
    \item If $\lk(L_i,L_i) \equiv 0 \pmod 2$, then $\omega_{j_1} + \ldots + \omega_{j_k} = 1$.
    \item If $\lk(L_i,L_i) \equiv 1 \pmod 2$, then $\omega_{j_1} + \ldots + \omega_{j_k} = 0$.
   \end{enumerate}
 \end{enumerate}
 However
 \begin{enumerate}
  \item $\deg(v_+ 1_0) = 0$, thanks to Equation~\eqref{E:graded_ribbon_0_1};
  \item $\deg(v_+ 1_1) = 1$, thanks to Equation~\eqref{E:graded_ribbon_1_1};
  \item $(\deg(S(M'_+) 1_0),\deg(M''_+ 1_1)) = (1,0)$, thanks to Equation~\eqref{E:graded_copairing_0_1};
  \item $(\deg(S(M'_+) 1_1),\deg(M''_+ 1_1)) = (1,1)$, thanks to Equation~\eqref{E:graded_copairing_1_1};
  \item $(\deg(S(M'_+) 1_1),\deg(M''_+ 1_0)) = (0,1)$, thanks to Equation~\eqref{E:graded_copairing_1_0}.
 \end{enumerate}
 This implies immediately that
 \begin{align*}
  \lambda(S(M'_+) 1_0 v_+^n 1_0) M''_+ 1_1 &= 0, \\*
  \lambda(S(M'_+) 1_1 v_+^{2n} 1_1) M''_+ 1_1 &= 0, \\*
  \lambda(S(M'_+) 1_1 v_+^{2n+1} 1_1) M''_+ 1_0 &= 0,
 \end{align*} 
 which in particular means that
 \begin{align*}
  \lambda(S(M'_+) v_+^n 1_0) (M''_+)_{(1)} 1_{\omega_{j_1}} \otimes \ldots \otimes (M''_+)_{(k)} 1_{\omega_{j_k}} &= 0, \\*
  \lambda(S(M'_+) v_+^{2n} 1_1) (M''_+)_{(1)} 1_{\omega_{j_1}} \otimes \ldots \otimes (M''_+)_{(k)} 1_{\omega_{j_k}} &= 0, \\*
  \lambda(S(M'_+) v_+^{2n+1} 1_1) (M''_+)_{(1)} 1_{\omega_{j_1}} \otimes \ldots \otimes (M''_+)_{(k)} 1_{\omega_{j_k}} &= 0. \qedhere
 \end{align*}
\end{proof}

\appendix

\section{Rescaling the invariant}\label{A:rescaling}

In this appendix, we explain the behavior of the invariant of \cite{BD21} under the operation of rescaling the integral and the cointegral. In order to do this, let us consider a unimodular ribbon category $\calC$ with end
\[
 \calE = \int_{X \in \calC} X \otimes X^*,
\]
as in \cite[Section~5.1]{BD21} (for instance, we could consider $\calC = \mods{H}$ for a unimodular ribbon Hopf algebra $H$, in which case $\calE = \myuline{H}$ would be the adjoint representation). Suppose that $\lambda : \calE \to \one$ is a two-sided integral, and that $\Lambda : \one \to \calE$ is a two-sided cointegral satisfying $\lambda \circ \Lambda = 1$. Let us denote by $J_\calC$ the associated scalar invariant of \dmnsnl{4} \hndlbds{2}, given by the restriction of the functor of \cite[Theorem~1.1]{BD21} to endomorphisms of the tensor unit of $\RHB$. Let us also denote by $J_{\calC,\xi}$ the scalar invariant obtained from $J_\calC$ by replacing $\lambda$ and $\Lambda$ with $\xi \lambda$ and $\xi^{-1} \Lambda$ respectively, for some invertible scalar $\xi \in \Bbbk^\times$.

\begin{lemma}\label{L:rescaling}
 For every \dmnsnl{4} \hndlbd{2} $W$ we have
 \begin{equation}\label{E:rescaling}
  J_{\calC,\xi}(W) = \xi^{\chi(W)-1} J_\calC(W),
 \end{equation}
 where $\chi(W)$ is the Euler characteristic of $W$.
\end{lemma}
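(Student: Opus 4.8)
The plan is to track precisely how the integral $\lambda$ and the cointegral $\Lambda$ enter the evaluation of $J_\calC(W)$, and to show that this dependence is multilinear and controlled entirely by the number of handles of each index.

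First I would fix a Kirby diagram $D$ of $W$ with a single $0$-handle, $n_1$ dotted components representing the $1$-handles, and $n_2$ undotted components representing the $2$-handles. Since a handle of index $j$ contributes a $j$-cell, the Euler characteristic is
\[
 \chi(W) = 1 - n_1 + n_2,
\]
so that $\chi(W) - 1 = n_2 - n_1$.

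Next I would unwind the definition of $J_\calC(W)$ as the value of the functor $J_4$ of \cite[Theorem~1.1]{BD21} on the closed handlebody determined by $D$, recording the only two places where the integral and cointegral of $\calE$ are inserted. The integral enters exactly once in closing off each undotted component, contributing a single factor of the form $\lambda(\,\cdot\,)$ per $2$-handle, so that $\lambda$ is applied $n_2$ times. The cointegral enters exactly once in the removal of each dotted component: a $1$-handle pierced by $k$ strands inserts a single iterated coproduct of $\Lambda$, which is linear in $\Lambda$, while the degenerate case $k=0$ contributes the scalar $\varepsilon(\Lambda)$, again linear in $\Lambda$; thus $\Lambda$ is used $n_1$ times. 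Every other ingredient---the R-matrix, the pivotal element $g$, and the ribbon element---is built from the braided ribbon structure of $\calE$ alone, and is therefore unaffected by rescaling $(\lambda,\Lambda)$.

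Because $J_\calC(W)$ is separately linear in each of these $n_2$ occurrences of $\lambda$ and $n_1$ occurrences of $\Lambda$, simultaneously replacing $\lambda$ by $\xi\lambda$ and $\Lambda$ by $\xi^{-1}\Lambda$ multiplies the value by $\xi^{n_2}(\xi^{-1})^{n_1}=\xi^{n_2-n_1}$. Combining this with $\chi(W)-1=n_2-n_1$ gives $J_{\calC,\xi}(W)=\xi^{\chi(W)-1}J_\calC(W)$, as claimed. The one point demanding care is the bookkeeping for the cointegral: one must verify that $\Lambda$ genuinely enters linearly and exactly once per $1$-handle, including the $k=0$ case, and one should observe that the rescaling preserves the normalization $\lambda\circ\Lambda=1$, so that $(\xi\lambda,\xi^{-1}\Lambda)$ defines the invariant $J_{\calC,\xi}$ through the same construction and no compensating factor is concealed in the handle moves used to establish well-definedness.
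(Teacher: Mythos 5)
Your proof is correct and follows essentially the same route as the paper's: both arguments count one insertion of $\lambda$ per $2$-handle and one insertion of $\Lambda$ per $1$-handle, so that the rescaling contributes $\xi^{n_2-n_1}$, and then identify $n_2-n_1$ with $\chi(W)-1$ (the paper via ranks of the cellular chain groups of $W$, you via direct handle counting). The extra checks you flag --- linearity in each insertion and preservation of the normalization $\lambda\circ\Lambda=1$ --- are exactly what is implicit in the paper's one-line assertion $J_{\calC,\xi}(W)=\xi^{\rank(C_2(W))-\rank(C_1(W))}J_\calC(W)$.
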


\begin{proof}
 Notice that the space $C_k(W)$ of cellular $k$-chains of $W$ is generated by the set of cores $D^k \times \{ 0 \}$ of $k$-han\-dles $D^k \times D^{4-k}$. In particular, we have
 \begin{center} 
 \begin{tikzpicture}[descr/.style={fill=white}]
  \node (P3) at (-3,0) {$C_3(W)$};
  \node (P2) at (-1,0) {$C_2(W)$};
  \node (P1) at (1,0) {$C_1(W)$};
  \node (P0) at (3,0) {$C_0(W)$};
  \node (Q3) at (-3,0.5) {\rotatebox{90}{$=$}};  
  \node (Q0) at (3,0.5) {\rotatebox{90}{$=$}};
  \node (R3) at (-3,1) {$0$};  
  \node (R0) at (3,1) {$\Z$};
  \draw
  (P3) edge[->] node[above] {$0$} (P2)
  (P2) edge[->] node[above] {$\partial$} (P1)
  (P1) edge[->] node[above] {$0$} (P0);
 \end{tikzpicture}
 \end{center} 
 Therefore
 \begin{align*}
  H_2(W) &= \ker(\partial), &
  H_1(W) &= \coker(\partial).
 \end{align*}
 This implies that
 \[
  \rank(C_2(W))-\rank(C_1(W)) = \rank(\ker(\partial))-\rank(\coker(\partial)) = \chi(W)-1.
 \]
 Since 
 \[
  J_{\calC,\xi}(W) = \xi^{\rank(C_2(W))-\rank(C_1(W))} J_\calC(W),
 \]
 we proved the claim.
\end{proof}

Notice that Equation~\eqref{E:rescaling} behaves well under boundary connected sum, since
\[
 \chi(W \bcs W') = \chi(W) + \chi(W') - \chi(D^3) =  \chi(W) + \chi(W') - 1.
\]
Thanks to Lemma~\ref{L:rescaling}, the normalization of Equations~\eqref{E:small_integral} \& \eqref{E:small_cointegral} simply amounts to multiplying the invariant of $W$ computed in \cite[Corollary~9.3]{BD21} by $i^{(p-1)(\chi(W)-1)}$ (notice that $r = 2p$ in our current notation). 
%
%
Therefore, under these conventions, we have
\begin{align*}
 J_{\bar{U}}((S^2 \times S^2) \smallsetminus \mathring{D}^4) &= 1, \\*
 J_{\bar{U}}((S^2 \ttimes S^2) \smallsetminus \mathring{D}^4) &= 
 \begin{cases}
  1 & \mbox{ if } p \equiv 2 \pmod 4, \\
  0 & \mbox{ if } p \equiv 0 \pmod 4.
 \end{cases}
\end{align*}

\section{Ribbon element and copairing}

Let us compute the ribbon element $v_+ \in U$.

\begin{lemma}\label{L:ribbon}
 The ribbon element and its inverse are given by
 \begin{align*}
 v_+ &= \frac{1-i}{2\sqrt{p}} 
 \sum_{a=0}^{p-1} \sum_{b=0}^{2p-1} \frac{\{ -1 \}^a}{[a]!}
 q^{-\frac{(a+3)a}{2}} t^{(b+p+1)^2} F^a E^a K^{-a+b}, \\*
 v_- &= \frac{1+i}{2\sqrt{p}} 
 \sum_{a=0}^{p-1} \sum_{b=0}^{2p-1} \frac{\{ 1 \}^a}{[a]!} 
 q^{\frac{(a+3)a}{2}} t^{-(b+p-1)^2} F^a E^a K^{a+b}.
\end{align*}
\end{lemma}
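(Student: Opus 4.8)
The plan is to reduce everything to the Drinfeld element. Since $\mods{\tilde U}$ is ribbon and the pivotal element is $g = K^{p+1}$ by Equation~\eqref{E:pivotal_element}, the defining relation $g = u v_+^{-1}$ for the Drinfeld element $u = S(\tilde R'')\tilde R'$ gives, using centrality of $v_+$, the identity $v_+ = g^{-1} u = u K^{-(p+1)}$, and dually $v_- = v_+^{-1} = g\, u^{-1}$. So the entire content is to evaluate $u$ explicitly and multiply by $K^{-(p+1)}$; the inverse ribbon element will come from the mirror computation.

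First I would insert the factorization $\tilde R = \tilde D\,\Theta$ from Equations~\eqref{E:quasi-R-matrix}--\eqref{E:diagonal-component}, so that $\tilde R = \frac{1}{4p}\sum_{a,b=0}^{4p-1}\sum_{n=0}^{p-1}\frac{\{ 1 \}^n}{[n]!}q^{\frac{n(n-1)}{2}}t^{-ab}\,\tilde K^a E^n\otimes\tilde K^b F^n$. Applying $S$ to the right leg with $S(\tilde K^b)=\tilde K^{-b}$ and $S(F^n)=(-1)^n q^{n(n-1)}\tilde K^{2n}F^n$ (a one-line induction from $S(\tilde F)=-\tilde K^2\tilde F$, whose sign $(-1)^n$ is exactly what turns $\{ 1 \}^n$ into $\{ -1 \}^n$), and then commuting all Cartan factors to the right via $\tilde K^m E^n=q^{mn}E^n\tilde K^m$ and $\tilde K^m F^n=q^{-mn}F^n\tilde K^m$, each summand acquires the normal-ordered shape $F^n E^n\tilde K^{a-b+2n}$. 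Collecting the scalars yields $u=\frac{1}{4p}\sum_n\frac{\{ -1 \}^n}{[n]!}q^{\frac{3n(n-1)}{2}}\sum_{a,b}t^{-ab}q^{(a-b)n}\,F^n E^n\tilde K^{a-b+2n}$.

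The crux is the Cartan double sum. Writing $c=a-b$ and evaluating the inner $\sum_b t^{-b^2-bc}$ as a quadratic Gauss sum, one sees it vanishes unless $c$ is even; this parity vanishing is precisely the mechanism forcing every surviving power of $\tilde K$ to be even, hence a genuine power of $K=\tilde K^2$, which is what lands $v_\pm$ inside $U$ despite $\tilde R\notin U\otimes U$. Putting $c=2c'$ and completing the square gives $\sum_b t^{-b^2-2bc'}=t^{c'^2}\sum_{b'}t^{-b'^2}=(1-i)\,2\sqrt{p}\,t^{c'^2}$ via the classical value $\sum_{b=0}^{4p-1}t^{-b^2}=(1-i)\sqrt{4p}$ for $4p\equiv 0\pmod 4$; the normalization collapses to $\frac{1}{4p}(1-i)2\sqrt p=\frac{1-i}{2\sqrt p}$. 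Completing the square once more in the remaining variable, $t^{c'^2+4c'n}=t^{(c'+2n)^2}q^{-2n^2}$, the factor $q^{-2n^2}$ cancels against $q^{3n(n-1)/2}=q^{-(n+3)n/2}q^{2n^2}$ to leave exactly $q^{-(n+3)n/2}$, and multiplying by $K^{-(p+1)}$ and reindexing $b=c'+2n-(p+1)$ turns the square into $t^{(b+p+1)^2}$ with Cartan exponent $K^{-n+b}$, reproducing the stated formula for $v_+$ verbatim. I expect this exponent bookkeeping, together with the two completions of the square and the parity argument, to be the only real obstacle.

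For $v_-$ I would run the entirely parallel computation from the inverse R-matrix $\tilde R^{-1}=\bar\Theta\,\tilde D^{-1}$, in which $q$ and $t$ are replaced by their inverses and $\{ 1 \}$ by $\{ -1 \}$: the conjugate Gauss sum $\sum_{b=0}^{4p-1}t^{b^2}=(1+i)\sqrt{4p}$ then produces the prefactor $\frac{1+i}{2\sqrt p}$ and, after multiplying by $g$ rather than $g^{-1}$, the shift $t^{-(b+p-1)^2}$ and Cartan exponent $K^{a+b}$. A final direct check that $v_+v_-=1$, or equivalently matching against the known inverse, confirms both formulas.
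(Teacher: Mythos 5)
Your proposal is correct and follows essentially the same route as the paper: compute the Drinfeld element $u$ from the factorization $\tilde{R} = \tilde{D}\Theta$, normal-order, evaluate the Cartan double sum as a quadratic Gauss sum whose vanishing in odd degree forces even powers of $\tilde{K}$ (hence lands $v_\pm$ in $U$), and multiply by the pivot power $g^{-1} = K^{-(p+1)} = K^{p-1}$ — your $v_+$ derivation reproduces the paper's computation essentially verbatim, and the paper handles $v_-$ by exactly the parallel inverted-parameter computation you sketch, using $u^{-1} = \tilde{R}'' S^2(\tilde{R}')$ and $v_- = u^{-1}K^{p+1}$. The one point needing care in your $v_-$ sketch: applying the \emph{same} contraction $S(\cdot'')\cdot'$ to $\tilde{R}^{-1} = \sum S(\tilde{R}')\otimes\tilde{R}''$ gives $\sum S(\tilde{R}'')S(\tilde{R}') = \sum \tilde{R}''\tilde{R}'$, which is not $u^{-1}$; one must use the mirrored contraction (second leg times antipode of first leg, equivalently $u^{-1} = \sum \tilde{R}''S^2(\tilde{R}')$ computed from $\tilde{R}$ itself), after which your conjugate Gauss sum, the shift by $g$, and the final check $v_+v_-=1$ go through as stated.
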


\begin{proof}
 These formulas can be obtained by computing the Drinfeld element
 \begin{align*}
  u &= \frac{1}{4p} \sum_{a=0}^{p-1} \sum_{b,c=0}^{4p-1} \frac{\{ 1 \}^a}{[a]!}
  q^{\frac{a(a-1)}{2}} t^{-bc} S(k^c F^a) k^b E^a  \\*
  &= \frac{1}{4p} \sum_{a=0}^{p-1} \sum_{b,c=0}^{4p-1} (-1)^a \frac{\{ 1 \}^a}{[a]!}
  q^{-\frac{(a+3)a}{2}} t^{-bc} F^a k^{2a+b-c} E^a \\*
  &= \frac{1}{4p} \sum_{a=0}^{p-1} \sum_{b,c=0}^{4p-1} (-1)^a \frac{\{ 1 \}^a}{[a]!}
  q^{-\frac{(a+3)a}{2}} t^{2a(2a+b-c)-bc} F^a E^a k^{2a+b-c}
 \end{align*}
 and its inverse
 \begin{align*}
  u^{-1} &= \frac{1}{4p} \sum_{a=0}^{p-1} \sum_{b,c=0}^{4p-1} \frac{\{ 1 \}^a}{[a]!}
  q^{\frac{a(a-1)}{2}} t^{-bc} k^c F^a S^2(k^b E^a) \\*
  &= \frac{1}{4p} \sum_{a=0}^{p-1} \sum_{b,c=0}^{4p-1} \frac{\{ 1 \}^a}{[a]!}
  q^{\frac{(a+3)a}{2}} t^{-bc} k^c F^a k^b E^a \\*
  &= \frac{1}{4p} \sum_{a=0}^{p-1} \sum_{b,c=0}^{4p-1} \frac{\{ 1 \}^a}{[a]!}
  q^{\frac{(a+3)a}{2}} t^{2ab-bc} F^a E^a k^{b+c}.
 \end{align*}
 For what concerns $u$, if we set $d = 4a+b-c$ and $n = 2a+b$, we obtain
 \begin{align*}
  u 
  &= \frac{1}{4p} \sum_{a=0}^{p-1} \sum_{b,c=0}^{4p-1} \left( \sum_{n=0}^{4p-1} t^{-n^2+dn} \right) \frac{\{ -1 \}^a}{[a]!}
  q^{-\frac{(a+3)a}{2}} F^a E^a k^{-2a+d}.
 \end{align*}
 Similarly, for what concerns $u^{-1}$, if we set $d = -2a+b+c$ and $n = b$, we obtain
 \begin{align*}
  u^{-1} 
  &= \frac{1}{4p} \sum_{a=0}^{p-1} \left( \sum_{n=0}^{4p-1} t^{n^2+dn} \right) \frac{\{ 1 \}^a}{[a]!}
  q^{\frac{(a+3)a}{2}} F^a E^a k^{2a+d}.
 \end{align*}
 Since
 \begin{align*}
  \sum_{n=0}^{4p-1} t^{\pm n^2 + dn} &=
  \begin{cases}
   2\sqrt{p}(1 \pm i)t^{\mp \frac{d^2}{4}} & \mbox{ if } d \equiv 0 \pmod 2, \\
   0 & \mbox{ if } d \equiv 1 \pmod 2,
  \end{cases}
 \end{align*}
 this implies
 \begin{align*}
  u &= \frac{1-i}{2\sqrt{p}} \sum_{a=0}^{p-1} \sum_{b=0}^{2p-1} \frac{\{ -1 \}^a}{[a]!}
  q^{-\frac{(a+3)a}{2}} t^{b^2} F^a E^a k^{-2a+2b},
 \end{align*}
 and similarly
 \begin{align*}
  u^{-1} &= \frac{1+i}{2\sqrt{p}} \sum_{a=0}^{p-1} \sum_{b=0}^{2p-1} \frac{\{ 1 \}^a}{[a]!} q^{\frac{(a+3)a}{2}} t^{-b^2} F^a E^a k^{2a+2b}.
 \end{align*}
 The claim now follows from $v_+ = uK^{p-1}$ and $v_- = u^{-1}K^{p+1}$. 
\end{proof}

Let us compute the M-matrix $M_+ \in U \otimes U$.

\begin{lemma}\label{L:monodromy}
 The M-matrix and its inverse are given by
 \begin{align*}
  M_+ &= \frac{1}{2p} 
  \sum_{a,b=0}^{p-1} \sum_{c,d=0}^{2p-1} \frac{\{ 1 \}^{a+b}}{[a]![b]!} q^{\frac{a(a-1)+b(b-1)}{2} - 2b^2 - cd} K^{-b+c} F^b E^a \otimes K^{b+d} E^b F^a, \\*
  M_- &= \frac{1}{2p} 
  \sum_{a,b=0}^{p-1} \sum_{c,d=0}^{2p-1} \frac{\{ -1 \}^{a+b}}{[a]![b]!} q^{-\frac{a(a-1)+b(b-1)}{2} + 2b^2 + cd} E^a F^b K^{-b+c} \otimes F^a E^b K^{b+d}.
 \end{align*}
\end{lemma}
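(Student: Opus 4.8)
The plan is to compute $M_+$ directly from its definition $M_+ = (R'' \otimes R')(R' \otimes R'')$, using the explicit R-matrix $\tilde{R} = \tilde{D}\Theta$ recorded in \eqref{E:quasi-R-matrix}--\eqref{E:diagonal-component}. Since the diagonal part $\tilde{D}$ is invariant under the tensor flip $\tau$ (swap the summation indices $a \leftrightarrow b$ in \eqref{E:diagonal-component}), the flipped R-matrix is $(R'' \otimes R') = \tau(\tilde{D}\Theta) = \tilde{D}\,\tau(\Theta)$, where $\tau(\Theta) = \sum_{a} \frac{\{1\}^a}{[a]!} q^{a(a-1)/2}\, F^a \otimes E^a$. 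Thus $M_+ = \tilde{D}\,\tau(\Theta)\,\tilde{D}\,\Theta$, a product of four factors in $U \otimes U$ whose expansion carries two summation indices from the two copies of $\Theta$ (these will become $a, b$ in the statement) and four Cartan exponents from the two copies of $\tilde{D}$ (two of which will survive as $c, d$).

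I would then bring each tensor leg into PBW order. Commuting all Cartan factors $\tilde{K}^{(\cdot)}$ past the $E$'s and $F$'s by means of $\tilde{K}\tilde{E}\tilde{K}^{-1} = q\tilde{E}$ and $\tilde{K}\tilde{F}\tilde{K}^{-1} = q^{-1}\tilde{F}$ (recalling $K = \tilde{K}^2$), the first leg takes the shape (Cartan power)$\cdot F^b E^a$ and the second leg the shape (Cartan power)$\cdot E^b F^a$, while the two free Cartan exponents remain summed over $0, \dots, 4p-1$ inside quadratic and bilinear $t$-phases. Completing the square in these two exponents and applying the Gaussian sum identity from the proof of Lemma~\ref{L:ribbon}, namely $\sum_{n=0}^{4p-1} t^{\pm n^2 + dn} = 2\sqrt{p}\,(1 \pm i)\, t^{\mp d^2/4}$ for $d$ even and $0$ for $d$ odd, each sum imposes a parity constraint that collapses $\tilde{K}$ to $K = \tilde{K}^2$ and contributes a factor $2\sqrt{p}\,(1 \pm i)$. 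Since $(1+i)(1-i) = 2$, the two factors combine with the overall prefactor $1/(4p)^2$ to produce exactly $1/(2p)$, and the two surviving Cartan exponents now range over $0, \dots, 2p-1$, becoming $c$ and $d$.

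It then remains to reindex and to gather all the accumulated $q$- and $t$-phases into the single exponent $\frac{a(a-1)+b(b-1)}{2} - 2b^2 - cd$, matching the prescribed orderings $K^{-b+c}F^b E^a \otimes K^{b+d}E^b F^a$. The inverse $M_-$ is obtained by the identical computation starting from $\tilde{R}^{-1}$, whose quasi-R-matrix and diagonal part are the inverses of $\Theta$ and $\tilde{D}$ (effecting $\{1\} \mapsto \{-1\}$, $q \mapsto q^{-1}$, and $t \mapsto t^{-1}$ in the relevant factors), or equivalently by verifying that the stated $M_-$ multiplies $M_+$ to $1 \otimes 1$. I expect the main obstacle to be the phase bookkeeping in the middle step: correctly tracking the $q$- and $t$-exponents through every commutation and, above all, completing the square so that the Gauss sum identity applies cleanly and delivers precisely the asymmetric Cartan exponents $-b+c$ and $b+d$ together with the $-2b^2$ term.
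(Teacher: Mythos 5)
Your outline starts the same way as the paper's proof: expand $M_+ = (R''\otimes R')(R'\otimes R'') = \tilde D\,\tau(\Theta)\,\tilde D\,\Theta$, bring each tensor leg to PBW order, evaluate the sums over the Cartan exponents, and reindex; your treatment of $M_-$ (same computation with $\Theta^{-1}$ and $\tilde D^{-1}$) also matches. The gap is in the central step: the Gauss sum identity $\sum_{n=0}^{4p-1} t^{\pm n^2+dn} = 2\sqrt p\,(1\pm i)\,t^{\mp d^2/4}$ is never applicable in this computation, because no diagonal quadratic terms appear in the phases. Write the four Cartan exponents as $m_1,m_2$ (first leg) and $n_1,n_2$ (second leg); after commuting to PBW order the summand is
\[
 t^{-m_1 n_1 - m_2 n_2 + 2b(m_2-n_2)}\; \tilde K^{m_1+m_2} F^b E^a \otimes \tilde K^{n_1+n_2} E^b F^a .
\]
Since the group-like parts depend only on $g=m_1+m_2$ and $h=n_1+n_2$, the sums to evaluate are over the fiber variables at fixed $(g,h)$; substituting $m_1=g-m_2$, $n_1=h-n_2$, the phase becomes $-gh+gn_2+hm_2-2m_2n_2+2b(m_2-n_2)$, whose quadratic part is the purely off-diagonal (hyperbolic) form $-2m_2n_2$. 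This is linear in $m_2$ alone and in $n_2$ alone, so there is no square to complete: the sum over $m_2$ is the character sum $\sum_{m_2}t^{m_2(h+2b-2n_2)} = 4p\,\delta_{2n_2\equiv h+2b \bmod 4p}$, a Kronecker delta; it forces $h$ to be even, and the two solutions for $n_2$ then contribute the factor $1+(-1)^g$ that kills odd $g$. That delta-plus-folding mechanism, not a Gaussian, is what collapses $\tilde K$ to $K$, and it is exactly how the paper's proof proceeds. (Even if you try to diagonalize the hyperbolic form via $u=m_2+n_2$, $v=m_2-n_2$, you get half-integer exponents $t^{\pm u^2/2}$ and a non-bijective change of variables over $\Z/4p\Z$, still outside the scope of the quoted identity.) The Gauss sums are genuine in Lemma~\ref{L:ribbon} because $u = S(R'')R'$ multiplies both legs of $\tilde D$ into a single algebra factor, producing diagonal terms $t^{\pm n^2+dn}$ after substitution; that structural feature is absent for the $M$-matrix, whose legs stay in separate tensor factors.

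Two tell-tale signs that your mechanism cannot be right: first, a Gauss-sum evaluation would leave quadratic output phases $t^{\mp(\cdot)^2/4}$ in the surviving indices, hence $q$-exponents containing $c^2$ and $d^2$, whereas the stated formula contains only the bilinear term $-cd$ (the $-2b^2$ arises from the commutation phases and the final shift $g=-b+c$, $e=d$, not from any Gaussian); second, although your prefactor bookkeeping happens to close, since $2\sqrt p\,(1+i)\cdot 2\sqrt p\,(1-i) = 8p$ and $8p/(16p^2)=1/(2p)$, in the correct computation the $8p$ arises as $4p$ (from the delta) times $2$ (from the parity folding), and no factors $\sqrt p$ or $1\pm i$ ever appear. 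Replacing your ``complete the square and apply the Gauss sum'' step by this orthogonality evaluation turns your outline into the paper's proof.
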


\begin{proof}
 We have
 \begin{align*}
  M_+ 
  &= \frac{1}{16p^2} \sum_{a,b=0}^{p-1} \sum_{c,d,e,f=0}^{4p-1} \frac{\{ 1 \}^{a+b}}{[a]![b]!} \\*
  &\hspace*{\parindent} q^{\frac{a(a-1)+b(b-1)}{2}} t^{-cd-ef} k^f F^b k^c E^a \otimes k^e E^b k^d F^a \\
  &= \frac{1}{16p^2} \sum_{a,b=0}^{p-1} \sum_{c,d,e,f=0}^{4p-1} \frac{\{ 1 \}^{a+b}}{[a]![b]!} \\*
  &\hspace*{\parindent} q^{\frac{a(a-1)+b(b-1)}{2}} t^{-cd-ef+2b(c-d)} k^{c+f} F^b E^a \otimes k^{d+e} E^b F^a \\
  &= \frac{1}{16p^2} \sum_{a,b=0}^{p-1} \sum_{e,f,g,h=0}^{4p-1} \frac{\{ 1 \}^{a+b}}{[a]![b]!} \\*
  &\hspace*{\parindent} q^{\frac{a(a-1)+b(b-1)}{2}} t^{-(f-g)(e-h)-ef-2b(f-g-e+h)} k^g F^b E^a \otimes k^h E^b F^a \\
  &= \frac{1}{4p} \sum_{a,b=0}^{p-1} \sum_{e,g,h=0}^{4p-1} \left( \frac{1}{4p} \sum_{f=0}^{4p-1} t^{-f(2b+2e-h)} \right) \frac{\{ 1 \}^{a+b}}{[a]![b]!} \\*
  &\hspace*{\parindent} q^{\frac{a(a-1)+b(b-1)}{2}} t^{(2b+g)(e-h)+2bg} k^g F^b E^a \otimes k^h E^b F^a \\
  &= \frac{1}{4p} \sum_{a,b=0}^{p-1} \sum_{e,g=0}^{4p-1} \frac{\{ 1 \}^{a+b}}{[a]![b]!} \\*
  &\hspace*{\parindent} q^{\frac{a(a-1)+b(b-1)}{2}} t^{-(2b+g)(2b+e)+2bg} k^g F^b E^a \otimes k^{2b+2e} E^b F^a \\
  &= \frac{1}{4p} \sum_{a,b=0}^{p-1} \sum_{e=0}^{2p-1} \sum_{g=0}^{4p-1} \frac{\{ 1 \}^{a+b}}{[a]![b]!} (1+(-1)^g) \\*
  &\hspace*{\parindent} q^{\frac{a(a-1)+b(b-1)}{2}} t^{-(2b+g)(2b+e)+2bg} k^g F^b E^a \otimes k^{2b+2e} E^b F^a \\
  &= \frac{1}{2p} \sum_{a,b=0}^{p-1} \sum_{e,g=0}^{2p-1} \frac{\{ 1 \}^{a+b}}{[a]![b]!} \\*
  &\hspace*{\parindent} q^{\frac{a(a-1)+b(b-1)}{2}} t^{-(2b+2g)(2b+e)+4bg} k^{2g} F^b E^a \otimes k^{2b+2e} E^b F^a.
 \end{align*}
 Similarly, we have
 \begin{align*}
  M_-
  &= \frac{1}{16p^2} \sum_{a,b=0}^{p-1} \sum_{c,d,e,f=0}^{4p-1} \frac{\{ -1 \}^{a+b}}{[a]![b]!} \\*
  &\hspace*{\parindent} q^{-\frac{a(a-1)+b(b-1)}{2}} t^{cd+ef}  E^a k^c F^b k^f \otimes F^a k^d E^b k^e \\
  &= \frac{1}{16p^2} \sum_{a,b=0}^{p-1} \sum_{c,d,e,f=0}^{4p-1} \frac{\{ -1 \}^{a+b}}{[a]![b]!} \\*
  &\hspace*{\parindent} q^{-\frac{a(a-1)+b(b-1)}{2}} t^{cd+ef-2b(c-d)} E^a F^b k^{c+f} \otimes F^a E^b k^{d+e} \\
  &= \frac{1}{16p^2} \sum_{a,b=0}^{p-1} \sum_{e,f,g,h=0}^{4p-1} \frac{\{ -1 \}^{a+b}}{[a]![b]!} \\*
  &\hspace*{\parindent} q^{-\frac{a(a-1)+b(b-1)}{2}} t^{(f-g)(e-h)+ef+2b(f-g-e+h)} E^a F^b k^g \otimes F^a E^b k^h \\
  &= \frac{1}{4p} \sum_{a,b=0}^{p-1} \sum_{e,g,h=0}^{4p-1} \left( \frac{1}{4p} \sum_{f=0}^{4p-1} t^{f(2b+2e-h)} \right) \frac{\{ -1 \}^{a+b}}{[a]![b]!} \\*
  &\hspace*{\parindent} q^{-\frac{a(a-1)+b(b-1)}{2}} t^{-(2b+g)(e-h)-2bg} E^a F^b k^g \otimes F^a E^b k^h \\
  &= \frac{1}{4p} \sum_{a,b=0}^{p-1} \sum_{e,g=0}^{4p-1} \frac{\{ -1 \}^{a+b}}{[a]![b]!} \\*
  &\hspace*{\parindent} q^{-\frac{a(a-1)+b(b-1)}{2}} t^{(2b+g)(2b+e)-2bg} E^a F^b k^g \otimes F^a E^b k^{2b+2e} \\
  &= \frac{1}{4p} \sum_{a,b=0}^{p-1} \sum_{e=0}^{2p-1} \sum_{g=0}^{4p-1} \frac{\{ -1 \}^{a+b}}{[a]![b]!} (1+(-1)^g) \\*
  &\hspace*{\parindent} q^{-\frac{a(a-1)+b(b-1)}{2}} t^{(2b+g)(2b+e)-2bg} E^a F^b k^g \otimes F^a E^b k^{2b+2e} \\
  &= \frac{1}{2p} \sum_{a,b=0}^{p-1} \sum_{e,g=0}^{2p-1} \frac{\{ -1 \}^{a+b}}{[a]![b]!} \\*
  &\hspace*{\parindent} q^{-\frac{a(a-1)+b(b-1)}{2}} t^{(2b+2g)(2b+e)-4bg} E^a F^b k^{2g} \otimes F^a E^b k^{2b+2e}.
 \end{align*}
 The claim now follows by setting $g = -b+c$ and $e = d$. 
\end{proof}

Let us compute the copairing 
\[
 w_+ := \myuline{w_+}(1) \in U \otimes U.
\]

\begin{lemma}\label{L:copairing}
 The copairing is given by
 \begin{align*}
  w_+ 
  &= \frac{1}{2p} 
  \sum_{a,b=0}^{p-1} \sum_{c,d=0}^{2p-1} \frac{\{ -1 \}^{a+b}}{[a]![b]!} q^{-\frac{a(a-1)+b(b-1)}{2} - 2b + cd} E^a F^b K^{a+c} \otimes E^b F^a K^{b+d}.
 \end{align*}
\end{lemma}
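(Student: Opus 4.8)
The starting point is the identity $w_+ = \myuline{w_+}(1) = S(M'_+) \otimes M''_+$ recorded in Proposition~\ref{P:3-modular}, which says precisely that $w_+ = (S \otimes \id)(M_+)$. The plan is therefore to take the explicit expression for $M_+$ furnished by Lemma~\ref{L:monodromy} and to apply the antipode to its first tensor factor only, then to normal-order both factors back into the PBW form $E^\bullet F^\bullet K^\bullet$, and finally to reindex the Cartan summation variables so as to match the asserted formula term by term.

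The core of the computation is the evaluation of $S(K^{-b+c} F^b E^a)$. Since $S$ is an algebra anti-homomorphism, this equals $S(E^a) S(F^b) S(K^{-b+c})$, and from $S(E) = -EK^{-1}$, $S(F) = -KF$, $S(K) = K^{-1}$ together with the commutation relations $KE = q^2 EK$ and $KF = q^{-2} FK$ one finds $S(E^a) = (-1)^a q^{-a(a-1)} E^a K^{-a}$ and $S(F^b) = (-1)^b q^{b(b-1)} K^b F^b$. Multiplying these and pushing the Cartan part to the right yields $S(M'_+) = (-1)^{a+b} q^{-(a-b)^2 + (a-b)} E^a F^b K^{-a+2b-c}$; here the sign $(-1)^{a+b}$ combines with the factor $\{ 1 \}^{a+b}$ coming from $M_+$ to produce the $\{ -1 \}^{a+b}$ that appears in the target. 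The second tensor factor $K^{b+d} E^b F^a$ is normal-ordered in the same way, using $K^n E^b = q^{2nb} E^b K^n$ and $K^n F^a = q^{-2na} F^a K^n$, to give $q^{2(b+d)(b-a)} E^b F^a K^{b+d}$, whose Cartan exponent $b+d$ already matches the desired form.

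At this stage the Cartan exponent on the first factor is $-a+2b-c$ rather than the required $a+c$, and the mixed term in the overall power of $q$ is $-cd$ rather than $+cd$. The decisive manoeuvre is to reindex $c$: because $c$ runs over a complete residue system modulo $2p$ and $K^{2p}=1$, I would substitute $c \mapsto 2b-2a-c$, a bijection of the index set $\{0,\dots,2p-1\}$ that turns $K^{-a+2b-c}$ into $K^{a+c}$. The same substitution rewrites $-cd$ as $+cd$ plus a correction $2ad-2bd$, and this correction exactly cancels the term $2ad-2bd$ generated when normal-ordering $M''_+$. Once those $d$-linear contributions annihilate one another, a short simplification collects the surviving quadratic terms into $-\tfrac{a(a-1)+b(b-1)}{2}$ and the linear terms into $-2b$, reproducing the claimed exponent verbatim.

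The only genuinely delicate point is the bookkeeping across this reindexing: one must check that the mixed and $d$-linear $q$-powers produced by normal-ordering $M''_+$ are precisely those cancelled by the rewriting of $-cd$, and that reducing $c$ modulo $2p$ is simultaneously consistent for the power of $q$ (of order $2p$) and for the power of $K$ (also of order $2p$). Everything else reduces to routine applications of the quantum commutation relations and the antipode formulas, so I expect no conceptual obstruction beyond careful accounting of exponents.
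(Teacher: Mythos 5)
Your proposal is correct and follows essentially the same route as the paper's own proof: starting from $w_+ = S(M'_+) \otimes M''_+$ with the explicit $M_+$ of Lemma~\ref{L:monodromy}, applying the antipode and normal-ordering both factors (producing the exponent correction $-(a-b)(a-b-1)$ and the sign absorbed into $\{-1\}^{a+b}$), and then reindexing $c \mapsto 2b-2a-c$ so that the $d$-linear terms cancel and the exponent collapses to $-\tfrac{a(a-1)+b(b-1)}{2}-2b+cd$. The paper carries this out as a single chain of equalities, with the reindexing done implicitly, but the computation is the same.
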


\begin{proof}
 We have
 \begin{align*}
  w_+ 
  &= \frac{1}{2p} 
  \sum_{a,b=0}^{p-1} \sum_{c,d=0}^{2p-1} \frac{\{ 1 \}^{a+b}}{[a]![b]!} q^{\frac{a(a-1)+b(b-1)}{2} - 2b^2 - cd} S(K^{-b+c} F^b E^a) \otimes K^{b+d} E^b F^a \\*
  &= \frac{1}{2p} 
  \sum_{a,b=0}^{p-1} \sum_{c,d=0}^{2p-1} \frac{\{ -1 \}^{a+b}}{[a]![b]!} q^{\frac{a(a-1)+b(b-1)}{2} - (a-b)(a-b-1) - 2b^2 - cd} \\*
  &\hspace*{\parindent} E^a F^b K^{-a+2b-c} \otimes K^{b+d} E^b F^a \\*
  &= \frac{1}{2p} 
  \sum_{a,b=0}^{p-1} \sum_{c,d=0}^{2p-1} \frac{\{ -1 \}^{a+b}}{[a]![b]!} q^{\frac{a(a-1)+b(b-1)}{2} - (a-b)(a-b-1) - 2(a-b)(b+d) - 2b^2 - cd} \\*
  &\hspace*{\parindent} E^a F^b K^{-a+2b-c} \otimes E^b F^a K^{b+d} \\*
  &= \frac{1}{2p} 
  \sum_{a,b=0}^{p-1} \sum_{c,d=0}^{2p-1} \frac{\{ -1 \}^{a+b}}{[a]![b]!} q^{\frac{a(a-1)+b(b-1)}{2} - (a-b)(a-b-1) - 2(a-b)(b+d) - 2b^2 + (2a-2b+c)d} \\*
  &\hspace*{\parindent} E^a F^b K^{a+c} \otimes E^b F^a K^{b+d} \\*
  &= \frac{1}{2p} 
  \sum_{a,b=0}^{p-1} \sum_{c,d=0}^{2p-1} \frac{\{ -1 \}^{a+b}}{[a]![b]!} q^{-\frac{a(a-1)+b(b-1)}{2} - 2b + cd} E^a F^b K^{a+c} \otimes E^b F^a K^{b+d}. \qedhere
 \end{align*}
\end{proof}

From now on, let us focus on the case $p \equiv 0 \pmod 2$. Let us compute the graded ribbon and inverse ribbon elements
\begin{align*}
 v_\alpha &:= v_+ 1_\alpha \in U 1_\alpha, &
 v_\alpha^{-1} &:= v_- 1_\alpha \in U 1_\alpha.
\end{align*}

\begin{lemma}\label{L:graded_ribbon}
 The graded ribbon and inverse ribbon elements are given for $p \equiv 2 \pmod 4$ by
 \begin{align}
  v_+  1_0
  &= \frac{1-i}{\sqrt{p}} 
  \sum_{a=0}^{p-1} \sum_{b=0}^{p'-1} \frac{\{ -1 \}^a}{[a]!}
  q^{-\frac{(a+3)a}{2}} t^{(2b-1)^2} F^a E^a K^{-a-2b} 1_0, \label{E:graded_ribbon_0_1} \\*
  v_+  1_1
  &= - \frac{1-i}{\sqrt{p}} 
  \sum_{a=0}^{p-1} \sum_{b=0}^{p'-1} \frac{\{ -1 \}^a}{[a]!}
  q^{-\frac{(a+3)a}{2}} t^{(2b)^2} F^a E^a K^{-a-2b-1} 1_1, \label{E:graded_ribbon_1_1} \\*
  v_-  1_0
  &= \frac{1+i}{\sqrt{p}} 
  \sum_{a=0}^{p-1} \sum_{b=0}^{p'-1} \frac{\{ 1 \}^a}{[a]!}
  q^{\frac{(a+3)a}{2}} t^{-(2b-1)^2} F^a E^a K^{a+2b} 1_0, \label{E:graded_inverse_ribbon_0_1} \\*
  v_-  1_1
  &= - \frac{1+i}{\sqrt{p}} 
  \sum_{a=0}^{p-1} \sum_{b=0}^{p'-1} \frac{\{ 1 \}^a}{[a]!}
  q^{\frac{(a+3)a}{2}} t^{-(2b)^2} F^a E^a K^{a+2b+1} 1_1, \label{E:graded_inverse_ribbon_1_1}
 \end{align}
 and for $p \equiv 0 \pmod 4$ by
 \begin{align}
  v_+ 1_0
  &= \frac{1-i}{\sqrt{p}} 
  \sum_{a=0}^{p-1} \sum_{b=0}^{p'-1} \frac{\{ -1 \}^a}{[a]!} 
  q^{-\frac{(a+3)a}{2}} t^{(2b)^2} F^a E^a K^{-a-2b-1} 1_0, \label{E:graded_ribbon_0_2} \\*
  v_+ 1_1
  &= - \frac{1-i}{\sqrt{p}} 
  \sum_{a=0}^{p-1} \sum_{b=0}^{p'-1} \frac{\{ -1 \}^a}{[a]!} 
  q^{-\frac{(a+3)a}{2}} t^{(2b-1)^2} F^a E^a K^{-a-2b} 1_1. \label{E:graded_ribbon_1_2} \\*
  v_- 1_0
  &= \frac{1+i}{\sqrt{p}} 
  \sum_{a=0}^{p-1} \sum_{b=0}^{p'-1} \frac{\{ 1 \}^a}{[a]!} 
  q^{\frac{(a+3)a}{2}} t^{-(2b)^2} F^a E^a K^{a+2b+1} 1_0, \label{E:graded_inverse_ribbon_0_2} \\*
  v_- 1_1
  &= - \frac{1+i}{\sqrt{p}} 
  \sum_{a=0}^{p-1} \sum_{b=0}^{p'-1} \frac{\{ 1 \}^a}{[a]!} 
  q^{\frac{(a+3)a}{2}} t^{-(2b-1)^2} F^a E^a K^{a+2b} 1_1. \label{E:graded_inverse_ribbon_1_2}
 \end{align}
\end{lemma}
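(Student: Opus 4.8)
The plan is to start from the closed formulas for $v_+$ and $v_-$ established in Lemma~\ref{L:ribbon} and to multiply them directly by the central idempotents $1_0 = \frac{1+K^p}{2}$ and $1_1 = \frac{1-K^p}{2}$, using the eigenvalue relation $K^p 1_\alpha = (-1)^\alpha 1_\alpha$, which follows from $K^{2p}=1$. Both expressions are linear combinations of terms of the form $F^a E^a K^{-a+b}$ (resp. $F^a E^a K^{a+b}$) with $b$ running from $0$ to $2p-1$, so I would split this range according to the residue of $b$ modulo $p$, writing $b = b' + \epsilon p$ with $b' \in \{0,\ldots,p-1\}$ and $\epsilon \in \{0,1\}$. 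Then $K^{-a+b}1_\alpha = (-1)^{\alpha\epsilon} K^{-a+b'} 1_\alpha$, so the two values of $\epsilon$ collapse onto a single power of $K$, and all the dependence on $\epsilon$ is carried by the quadratic weight $t^{(b+p+1)^2}$.

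The second step is to evaluate the resulting inner sum over $\epsilon$. Using $t^p = i$, $t^{2p} = -1$ and $t^{4p} = 1$, one computes $t^{(b'+2p+1)^2} = t^{(b'+p+1)^2}\, t^{p(2b'+3p+2)}$ and reduces the correction factor to $-(-1)^{b'} i^{3p}$, so that the $\epsilon$-sum equals $t^{(b'+p+1)^2}\bigl(1 - (-1)^{\alpha+b'} i^{3p}\bigr)$. At this point the arithmetic of $p$ enters: for $p \equiv 0 \pmod 4$ one has $i^{3p}=1$, while for $p \equiv 2 \pmod 4$ one has $i^{3p}=-1$. In either case the bracket vanishes for exactly one parity class of $b'$ and equals $2$ for the other, and this selection is precisely what distinguishes the four target formulas in each regime: the surviving parity of $b'$ matches the even/odd offset of the $K$-exponent ($K^{-a-2b}$ versus $K^{-a-2b-1}$) appearing in Equations~\eqref{E:graded_ribbon_0_1}--\eqref{E:graded_inverse_ribbon_1_2}, and the factor $2$ combines with the prefactor $\frac{1\mp i}{2\sqrt p}$ to produce $\frac{1\mp i}{\sqrt p}$.

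The final step is to reindex the surviving terms and simplify the quadratic exponent. Substituting $b'$ by an affine function of a new index $b\in\{0,\ldots,p'-1\}$ chosen so that $K^{-a+b'}1_\alpha$ matches the stated power of $K$, and using $t^{4p^2}=1$ together with $t^{\pm 4pb}=1$, the weight $t^{(b'+p+1)^2}$ collapses to $t^{(2b)^2}$ or $t^{(2b\mp 1)^2}$ as required. I expect the main obstacle to lie here, in the case $\alpha=1$: because $K^p 1_1 = -1_1$, reducing the $K$-exponent back into the standard range introduces extra signs, and the term at the boundary of the summation range wraps around and combines with the weight $t^{(p-1)^2}$ (which equals $-t$ when $p\equiv 0\pmod 4$) to reproduce the overall minus sign in front of Equations~\eqref{E:graded_ribbon_1_1}, \eqref{E:graded_inverse_ribbon_1_1}, \eqref{E:graded_ribbon_1_2} and \eqref{E:graded_inverse_ribbon_1_2}. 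The computation for $v_-$ is entirely parallel, starting from the second formula of Lemma~\ref{L:ribbon} with weight $t^{-(b+p-1)^2}$ and prefactor $\frac{1+i}{2\sqrt p}$, so once the $v_+$ bookkeeping is carried out carefully the inverse ribbon elements follow by the same argument.
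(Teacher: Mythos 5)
Your proposal is correct and amounts to essentially the same computation as the paper's proof: both start from Lemma~\ref{L:ribbon}, use $K^p 1_\alpha = (-1)^\alpha 1_\alpha$ to fold the sum over $b \in \{0,\ldots,2p-1\}$ onto $\{0,\ldots,p-1\}$, and let the value of $i^p$ (equivalently $i^{3p}$), which depends on $p \bmod 4$, kill one parity class of $b$ and double the other. The only real difference is the order of operations. The paper first substitutes $b \mapsto -b$ in Lemma~\ref{L:ribbon}, rewriting
\begin{equation*}
 v_+ = \frac{1-i}{2\sqrt{p}} \sum_{a=0}^{p-1} \sum_{b=0}^{2p-1} \frac{\{ -1 \}^a}{[a]!}\, i^p (-1)^{b+1} q^{-\frac{(a+3)a}{2}}\, t^{(b-1)^2} F^a E^a K^{-a-b},
\end{equation*}
and only then folds; the bracket produced by the fold is $\bigl( (-1)^\alpha + i^p(-1)^{b+1} \bigr)$, so the parity selection lands directly on the stated formulas, with the minus signs in \eqref{E:graded_ribbon_1_1}, \eqref{E:graded_inverse_ribbon_1_1}, \eqref{E:graded_ribbon_1_2}, \eqref{E:graded_inverse_ribbon_1_2} coming from that bracket equalling $-2$ rather than $+2$ on the surviving terms, and no further reindexing is needed. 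In your ordering (fold first, reindex last) the bracket $1 - (-1)^{\alpha+b'} i^{3p}$ is always $0$ or $+2$, so the signs must emerge in the final affine reindexing, partly from $K^p 1_1 = -1_1$ and partly from the boundary weight (for instance $t^{(p+1)^2} = -t$ when $p \equiv 0 \pmod 4$), exactly as you anticipate; this goes through in all four cases, but at the cost of the wrap-around bookkeeping that the paper's ordering avoids entirely.
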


\begin{proof}
 We have
 \begin{align*}
  v_+ 1_0 &= \frac{1-i}{2\sqrt{p}} 
  \sum_{a=0}^{p-1} \sum_{b=0}^{2p-1} \frac{\{ -1 \}^a}{[a]!} i^p (-1)^{b+1}
  q^{-\frac{(a+3)a}{2}} t^{(b-1)^2} F^a E^a K^{-a-b} 1_0 \\*
   &= \frac{1-i}{2\sqrt{p}} 
  \sum_{a=0}^{p-1} \sum_{b=0}^{p-1} \frac{\{ -1 \}^a}{[a]!} 
  q^{-\frac{(a+3)a}{2}} t^{(b-1)^2} \left( 1 + i^p (-1)^{b+1} \right) F^a E^a K^{-a-b} 1_0, \\
  v_+ 1_1 &= \frac{1-i}{2\sqrt{p}} 
  \sum_{a=0}^{p-1} \sum_{b=0}^{2p-1} \frac{\{ -1 \}^a}{[a]!} i^p (-1)^{b+1}
  q^{-\frac{(a+3)a}{2}} t^{(b-1)^2} F^a E^a K^{-a-b} 1_1 \\*
  &= \frac{1-i}{2\sqrt{p}} 
  \sum_{a=0}^{p-1} \sum_{b=0}^{p-1} \frac{\{ -1 \}^a}{[a]!} 
   q^{-\frac{(a+3)a}{2}} t^{(b-1)^2} \left( - 1 + i^p (-1)^{b+1} \right) F^a E^a K^{-a-b} 1_1, \\ 
  v_- 1_0 &= \frac{1+i}{2\sqrt{p}} 
  \sum_{a=0}^{p-1} \sum_{b=0}^{2p-1} \frac{\{ 1 \}^a}{[a]!} i^{-p} (-1)^{b+1}
  q^{\frac{(a+3)a}{2}} t^{-(b-1)^2} F^a E^a K^{a+b} 1_0 \\*
   &= \frac{1+i}{2\sqrt{p}} 
  \sum_{a=0}^{p-1} \sum_{b=0}^{p-1} \frac{\{ 1 \}^a}{[a]!} 
  q^{\frac{(a+3)a}{2}} t^{-(b-1)^2} \left( 1 + i^p (-1)^{b+1} \right) F^a E^a K^{a+b} 1_0, \\
  v_+ 1_1 &= \frac{1+i}{2\sqrt{p}} 
  \sum_{a=0}^{p-1} \sum_{b=0}^{2p-1} \frac{\{ 1 \}^a}{[a]!} i^{-p} (-1)^{b+1}
  q^{\frac{(a+3)a}{2}} t^{-(b-1)^2} F^a E^a K^{a+b} 1_1 \\*
  &= \frac{1+i}{2\sqrt{p}} 
  \sum_{a=0}^{p-1} \sum_{b=0}^{p-1} \frac{\{ 1 \}^a}{[a]!} 
   q^{\frac{(a+3)a}{2}} t^{(b-1)^2} \left( - 1 + i^p (-1)^{b+1} \right) F^a E^a K^{a+b} 1_1. \qedhere
 \end{align*}
\end{proof}

Let us compute the graded copairing 
\[
 w_{\alpha,\beta} := w_+ (1_\alpha \otimes 1_\beta) \in U 1_\alpha \otimes U 1_\beta.
\]

\begin{lemma}\label{L:graded_copairing}
 The graded copairing is given by
 \begin{align}
  w_+(1_0 \otimes 1_0)
  &= \frac{1}{p'} 
  \sum_{a,b=0}^{p-1} \sum_{c,d=0}^{p'-1} \frac{\{ -1 \}^{a+b}}{[a]![b]!} q^{-\frac{a(a-1)+b(b-1)}{2} - 2b + 4cd} \nonumber \\*
  &\hspace*{\parindent} E^a F^b K^{a+2c} 1_0 \otimes E^b F^a K^{b+2d} 1_0,  \label{E:graded_copairing_0_0} \\*
  w_+(1_0 \otimes 1_1)
  &= \frac{1}{p'} 
  \sum_{a,b=0}^{p-1} \sum_{c,d=0}^{p'-1} \frac{\{ -1 \}^{a+b}}{[a]![b]!} q^{-\frac{a(a-1)+b(b-1)}{2} - 2b + 2d + 4cd} \nonumber \\*
  &\hspace*{\parindent} E^a F^b K^{a+2c+1} 1_0 \otimes E^b F^a K^{b+2d} 1_1, \label{E:graded_copairing_0_1} \\*
  w_+(1_1 \otimes 1_0)
  &= \frac{1}{p'} 
  \sum_{a,b=0}^{p-1} \sum_{c,d=0}^{p'-1} \frac{\{ -1 \}^{a+b}}{[a]![b]!} q^{-\frac{a(a-1)+b(b-1)}{2} - 2b + 2c + 4cd} \nonumber \\*
  &\hspace*{\parindent} E^a F^b K^{a+2c} 1_1 \otimes E^b F^a K^{b+2d+1} 1_0, \label{E:graded_copairing_1_0} \\*
  w_+(1_1 \otimes 1_1)
  &= \frac{1}{p'} 
  \sum_{a,b=0}^{p-1} \sum_{c,d=0}^{p'-1} \frac{\{ -1 \}^{a+b}}{[a]![b]!} q^{-\frac{a(a-1)+b(b-1)}{2} - 2b + 2(c+d) + 4cd} \nonumber \\*
  &\hspace*{\parindent} E^a F^b K^{a+2c+1} 1_1 \otimes E^b F^a K^{b+2d+1} 1_1. \label{E:graded_copairing_1_1} 
\end{align}
\end{lemma}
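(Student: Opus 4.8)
The plan is to start from the closed formula for $w_+$ established in Lemma~\ref{L:copairing} and to right-multiply its two tensor factors by $1_\alpha$ and $1_\beta$, so that
\[
 w_+(1_\alpha \otimes 1_\beta) = \frac{1}{2p} \sum_{a,b=0}^{p-1} \sum_{c,d=0}^{2p-1} \frac{\{ -1 \}^{a+b}}{[a]![b]!} q^{-\frac{a(a-1)+b(b-1)}{2}-2b+cd} E^a F^b K^{a+c} 1_\alpha \otimes E^b F^a K^{b+d} 1_\beta.
\]
Since the idempotents are central and the monomials $E^aF^b$, $E^bF^a$ are untouched, the entire $(\alpha,\beta)$-dependence is carried by the Cartan data $K^{a+c}1_\alpha \otimes K^{b+d}1_\beta$ together with the coupling factor $q^{cd}$. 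Thus everything reduces to evaluating the double sum $\sum_{c,d=0}^{2p-1} q^{cd}\, K^{a+c}1_\alpha \otimes K^{b+d}1_\beta$ for each $\alpha,\beta \in \Z/2\Z$, and I would exploit the two facts $K^p 1_\alpha = (-1)^\alpha 1_\alpha$ together with the arithmetic identities $q^p = -1$ and $q^{p^2}=1$, the latter being exactly where the standing hypothesis $p \equiv 0 \pmod 2$ enters.

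The key step is to resolve the $c,d$ summations by parity. Writing $c = c_0 + p\epsilon_1$ and $d = d_0 + p\epsilon_2$ with $c_0,d_0 \in \{ 0,\ldots,p-1 \}$ and $\epsilon_1,\epsilon_2 \in \{ 0,1 \}$, the identities above give $K^{a+c}1_\alpha = (-1)^{\alpha\epsilon_1} K^{a+c_0}1_\alpha$, $K^{b+d}1_\beta = (-1)^{\beta\epsilon_2} K^{b+d_0}1_\beta$, and $q^{cd} = (-1)^{\epsilon_1 d_0 + c_0\epsilon_2}\, q^{c_0 d_0}$. Carrying out the inner sum over $\epsilon_1,\epsilon_2$ then yields the factor
\[
 \bigl(1 + (-1)^{\alpha + d_0}\bigr)\bigl(1 + (-1)^{\beta + c_0}\bigr),
\]
which vanishes unless $c_0 \equiv \beta$ and $d_0 \equiv \alpha \pmod 2$, in which case it equals $4$. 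Hence the sum over $\{ 0,\ldots,2p-1 \}^2$ collapses onto the parity-constrained lattice $c_0 \equiv \beta$, $d_0 \equiv \alpha \pmod 2$ inside $\{ 0,\ldots,p-1 \}^2$, and the emerging factor $4$ converts the prefactor $\tfrac{1}{2p}$ into $\tfrac{1}{p'}$, using $p = 2p'$.

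It then remains to reindex and specialize. Setting $c_0 = 2c + \beta$ and $d_0 = 2d + \alpha$ with $c,d \in \{ 0,\ldots,p'-1 \}$, the Cartan part becomes $K^{a+2c+\beta}1_\alpha \otimes K^{b+2d+\alpha}1_\beta$, reproducing the $K$-exponents in \eqref{E:graded_copairing_0_0}--\eqref{E:graded_copairing_1_1}, while the coupling exponent becomes
\[
 q^{c_0 d_0} = q^{(2c+\beta)(2d+\alpha)} = q^{4cd + 2\alpha c + 2\beta d + \alpha\beta}.
\]
Substituting $(\alpha,\beta) = (0,0),(0,1),(1,0),(1,1)$ and recombining with the surviving factor $q^{-\frac{a(a-1)+b(b-1)}{2}-2b}$ then produces the four displayed formulas. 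The only delicate point, and the step I would double-check most carefully, is the bookkeeping of this quadratic exponent $q^{c_0 d_0}$ under the reindexing: it is precisely the cross terms $2\alpha c$, $2\beta d$ and the constant contribution $\alpha\beta$ (which is nontrivial only in the case $\alpha=\beta=1$) that distinguish the four cases from one another, whereas everything else is a single uniform substitution.
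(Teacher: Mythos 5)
Your strategy is the same as the paper's: specialize the closed formula of Lemma~\ref{L:copairing} by the central idempotents and resolve the Cartan sums by parity, using $K^p 1_\alpha = (-1)^\alpha 1_\alpha$, $q^p=-1$, and $q^{p^2}=1$. The only real difference is organizational: where the paper folds the summation ranges from $\{0,\ldots,2p-1\}$ down to $\{0,\ldots,p'-1\}$ in two successive steps, carried out separately for each of the four pairs $(\alpha,\beta)$, you treat all four cases at once via the decomposition $c=c_0+p\epsilon_1$, $d=d_0+p\epsilon_2$ and the parity projector $\bigl(1+(-1)^{\alpha+d_0}\bigr)\bigl(1+(-1)^{\beta+c_0}\bigr)$. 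Your algebra is correct at every step, and it reproduces \eqref{E:graded_copairing_0_0}, \eqref{E:graded_copairing_0_1}, and \eqref{E:graded_copairing_1_0} exactly as displayed.

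However, your concluding claim that the reindexing ``produces the four displayed formulas'' fails for $(\alpha,\beta)=(1,1)$, and you should have flagged this as an outright contradiction rather than a point to double-check: your (correct) exponent $4cd+2\alpha c+2\beta d+\alpha\beta$ yields $q^{4cd+2c+2d+1}$ in that case, whereas \eqref{E:graded_copairing_1_1} displays $q^{4cd+2(c+d)}$; the two differ by a factor of $q$. The resolution is that the discrepancy is an error in the paper, not in your computation. In the last step of the paper's own $(1,1)$ calculation, the substitution $c\mapsto 2c+1$ turns the exponent $c+2cd$ into $(2c+1)+2(2c+1)d=2(c+d)+4cd+1$, and the constant $+1$ was dropped, so the stated equation \eqref{E:graded_copairing_1_1} is off by $q$. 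A direct check confirms your version: for $p=2$ (so $q=i$, $p'=1$), the pure Cartan part (the $a=b=0$ component) of $w_+(1_1\otimes 1_1)$ is $\tfrac14\sum_{c,d=0}^{3} i^{cd}\, K^c 1_1 \otimes K^d 1_1 = i\, K1_1\otimes K1_1 = q\, K1_1\otimes K1_1$, not $K1_1\otimes K1_1$. So you should state the corrected $(1,1)$ formula, with the extra factor $q^{\alpha\beta}=q$, instead of asserting agreement. (The slip is harmless for the paper's later use of the lemma: the proof of Theorem~\ref{T:decomposition} only extracts the $\Z/2\Z$-degrees of the two tensor factors of \eqref{E:graded_copairing_1_1}, which are insensitive to an overall scalar.)
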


\begin{proof}
 We have
 \begin{align*}
  w_+(1_0 \otimes 1_0)
  &= \frac{1}{2p} 
  \sum_{a,b,c=0}^{p-1} \sum_{d=0}^{2p-1} \frac{\{ -1 \}^{a+b}}{[a]![b]!} q^{-\frac{a(a-1)+b(b-1)}{2} - 2b + cd} \\*
  &\hspace*{\parindent} (1+(-1)^d) E^a F^b K^{a+c} 1_0 \otimes E^b F^a K^{b+d} 1_0 \\*
  &= \frac{1}{p} 
  \sum_{a,b,c,d=0}^{p-1} \frac{\{ -1 \}^{a+b}}{[a]![b]!} q^{-\frac{a(a-1)+b(b-1)}{2} - 2b + 2cd} \\*
  &\hspace*{\parindent} E^a F^b K^{a+c} 1_0 \otimes E^b F^a K^{b+2d} 1_0 \\*
  &= \frac{1}{p} 
  \sum_{a,b,c=0}^{p-1} \sum_{d=0}^{p'-1} \frac{\{ -1 \}^{a+b}}{[a]![b]!} q^{-\frac{a(a-1)+b(b-1)}{2} - 2b + 2cd} \\*
  &\hspace*{\parindent} (1+(-1)^c) E^a F^b K^{a+c} 1_0 \otimes E^b F^a K^{b+2d} 1_0 \\*
  &= \frac{1}{p'} 
  \sum_{a,b=0}^{p-1} \sum_{c,d=0}^{p'-1} \frac{\{ -1 \}^{a+b}}{[a]![b]!} q^{-\frac{a(a-1)+b(b-1)}{2} - 2b + 4cd} \\*
  &\hspace*{\parindent} E^a F^b K^{a+2c} 1_0 \otimes E^b F^a K^{b+2d} 1_0, \\
  w_+(1_0 \otimes 1_1)
  &= \frac{1}{2p} 
  \sum_{a,b,c=0}^{p-1} \sum_{d=0}^{2p-1} \frac{\{ -1 \}^{a+b}}{[a]![b]!} q^{-\frac{a(a-1)+b(b-1)}{2} - 2b + cd} \\*
  &\hspace*{\parindent} (1+(-1)^d) E^a F^b K^{a+c} 1_0 \otimes E^b F^a K^{b+d} 1_1 \\*
  &= \frac{1}{p} 
  \sum_{a,b,c,d=0}^{p-1} \frac{\{ -1 \}^{a+b}}{[a]![b]!} q^{-\frac{a(a-1)+b(b-1)}{2} - 2b + 2cd} \\*
  &\hspace*{\parindent} E^a F^b K^{a+c} 1_0 \otimes E^b F^a K^{b+2d} 1_1 \\*
  &= \frac{1}{p} 
  \sum_{a,b,c=0}^{p-1} \sum_{d=0}^{p'-1} \frac{\{ -1 \}^{a+b}}{[a]![b]!} q^{-\frac{a(a-1)+b(b-1)}{2} - 2b + 2cd} \\*
  &\hspace*{\parindent} (1-(-1)^c) E^a F^b K^{a+c} 1_0 \otimes E^b F^a K^{b+2d} 1_1 \\*
  &= \frac{1}{p'} 
  \sum_{a,b=0}^{p-1} \sum_{c,d=0}^{p'-1} \frac{\{ -1 \}^{a+b}}{[a]![b]!} q^{-\frac{a(a-1)+b(b-1)}{2} - 2b + 2d + 4cd} \\*
  &\hspace*{\parindent} E^a F^b K^{a+2c+1} 1_0 \otimes E^b F^a K^{b+2d} 1_1, \\
  w_+(1_1 \otimes 1_0)
  &= \frac{1}{2p} 
  \sum_{a,b,c=0}^{p-1} \sum_{d=0}^{2p-1} \frac{\{ -1 \}^{a+b}}{[a]![b]!} q^{-\frac{a(a-1)+b(b-1)}{2} - 2b + cd} \\*
  &\hspace*{\parindent} (1-(-1)^d) E^a F^b K^{a+c} 1_1 \otimes E^b F^a K^{b+d} 1_0 \\*
  &= \frac{1}{p} 
  \sum_{a,b,c,d=0}^{p-1} \frac{\{ -1 \}^{a+b}}{[a]![b]!} q^{-\frac{a(a-1)+b(b-1)}{2} - 2b + c + 2cd} \\*
  &\hspace*{\parindent} E^a F^b K^{a+c} 1_1 \otimes E^b F^a K^{b+2d+1} 1_0 \\*
  &= \frac{1}{p} 
  \sum_{a,b,c=0}^{p-1} \sum_{d=0}^{p'-1} \frac{\{ -1 \}^{a+b}}{[a]![b]!} q^{-\frac{a(a-1)+b(b-1)}{2} - 2b + c + 2cd} \\*
  &\hspace*{\parindent} (1+(-1)^c) E^a F^b K^{a+c} 1_1 \otimes E^b F^a K^{b+2d+1} 1_0 \\*
  &= \frac{1}{p'} 
  \sum_{a,b=0}^{p-1} \sum_{c,d=0}^{p'-1} \frac{\{ -1 \}^{a+b}}{[a]![b]!} q^{-\frac{a(a-1)+b(b-1)}{2} - 2b + 2c + 4cd} \\*
  &\hspace*{\parindent} E^a F^b K^{a+2c} 1_1 \otimes E^b F^a K^{b+2d+1} 1_0, \\
  w_+(1_1 \otimes 1_1)
  &= \frac{1}{2p} 
  \sum_{a,b,c=0}^{p-1} \sum_{d=0}^{2p-1} \frac{\{ -1 \}^{a+b}}{[a]![b]!} q^{-\frac{a(a-1)+b(b-1)}{2} - 2b + cd} \\*
  &\hspace*{\parindent} (1-(-1)^d) E^a F^b K^{a+c} 1_1 \otimes E^b F^a K^{b+d} 1_1 \\*
  &= \frac{1}{p} 
  \sum_{a,b,c,d=0}^{p-1} \frac{\{ -1 \}^{a+b}}{[a]![b]!} q^{-\frac{a(a-1)+b(b-1)}{2} - 2b + c + 2cd} \\*
  &\hspace*{\parindent} E^a F^b K^{a+c} 1_1 \otimes E^b F^a K^{b+2d+1} 1_1 \\*
  &= \frac{1}{p} 
  \sum_{a,b,c=0}^{p-1} \sum_{d=0}^{p'-1} \frac{\{ -1 \}^{a+b}}{[a]![b]!} q^{-\frac{a(a-1)+b(b-1)}{2} - 2b + c + 2cd} \\*
  &\hspace*{\parindent} (1-(-1)^c) E^a F^b K^{a+c} 1_1 \otimes E^b F^a K^{b+2d+1} 1_1 \\*
  &= \frac{1}{p'} 
  \sum_{a,b=0}^{p-1} \sum_{c,d=0}^{p'-1} \frac{\{ -1 \}^{a+b}}{[a]![b]!} q^{-\frac{a(a-1)+b(b-1)}{2} - 2b + 2(c+d) + 4cd} \\*
  &\hspace*{\parindent} E^a F^b K^{a+2c+1} 1_1 \otimes E^b F^a K^{b+2d+1} 1_1. \qedhere
 \end{align*}
\end{proof}


\begin{thebibliography}{9}
 
 \bibitem[As11]{A11}
 M.~Asaeda,
 \textit{Tensor Functors on a Certain Category Constructed From Spherical Categories},
 \doi{10.1142/S0218216511008607}{J. Knot Theory Ramifications \textbf{20} (2011), no.~1, 1--46}.

 \bibitem[Be98]{B98}
 A.~Beliakova, 
 \textit{Spin Topological Quantum Field Theories},
 \doi{10.1142/S0129167X98000099}{Internat. J. Math. \textbf{9} (1998), no.~2, 129--152.}
 \href{https://arXiv.org/abs/q-alg/9608014}{\texttt{arXiv:q-alg/9608014}}.
 
 \bibitem[Bl92]{B92}
 C.~Blanchet,
 \textit{Invariants on Three-Manifolds With Spin Structure},
 \doi{10.1007/BF02566511}{Comment. Math. Helv. \textbf{67} (1992), no.~3, 406--427}.
 
 \bibitem[BL18]{BL18}
 R.~Benedetti, P.~Lisca,
 \textit{Framing $3$-Manifolds With Bare Hands},
 \doi{10.4171/LEM/64-3/4-9}{Enseign. Math. \textbf{64} (2018), no.~3/4, 395--413};
 \arxiv{1806.04991}{[math.GT]}.
 
 \bibitem[BD21]{BD21}
 A.~Beliakova, M.~De~Renzi,
 \textit{Kerler--Lyubashenko Functors on $4$-Di\-men\-sion\-al $2$-Han\-dle\-bod\-ies},
 \doi{10.1093/imrn/rnac039}{Int. Math. Res. Not. IMRN (2022), rnac039};
 \arxiv{2105.02789}{[math.GT]}.
 
 \bibitem[BM02]{BM02}
 I.~Bobtcheva, M.~Messia
 \textit{HKR-Type Invariants of 4-Thick\-en\-ings of 2-Di\-men\-sion\-al CW Complexes},
 \doi{10.2140/agt.2003.3.33}{Algebr. Geom. Topol. \textbf{3} (2003), no.~1, 33--87};
 \href{https://arXiv.org/abs/math/0206307}{\texttt{arXiv:math/0206307} [math.QA]}.
 
 \bibitem[BP11]{BP11}
 I.~Bobtcheva, R.~Piergallini,
 \textit{On 4-Dimensional 2-Handlebodies and 3-Manifolds},
 \doi{10.1142/S0218216512501106}{J. Knot Theory Ramifications \textbf{21} (2012), no.~12, 1250110, 230~pp};
 \arxiv{1108.2717}{[math.GT]}.
 
 \bibitem[CY94]{CY94}
 L.~Crane, D.~Yetter,
 \textit{On Algebraic Structures Implicit in Topological Quantum Field Theories},
 \doi{10.1142/S0218216599000109}{J. Knot Theory Ramifications \textbf{8} (1999), no.~2, 125--163};
 \href{https://arXiv.org/abs/hep-th/9412025}{\texttt{arXiv:hep-th/9412025}}.
 
 \bibitem[GHP20]{GHP20}
 N.~Geer, N.~Ha, B.~Patureau-Mirand,
 \textit{Modified Graded Hennings Invariants From Unrolled Quantum Groups and Modified Integral},
 \doi{10.1016/j.jpaa.2021.106815}{J. Pure Appl. Algebra \textbf{226} (2022), no.~3, 106815};
 \arxiv{2006.12050}{[math.QA]}.
 
 \bibitem[GS99]{GS99}
 R.~Gompf, A.~Stipsicz,
 \textit{4-Manifolds and Kirby Calculus},
 \doi{10.1090/gsm/020}{Graduate Studies in Mathematics \textbf{20}, American Mathematical Society, Providence, RI, 1999}.
 
 \bibitem[He96]{H96}
 M. Hennings,
 \textit{Invariants of Links and 3-Manifolds Obtained from Hopf Algebras},
 \doi{10.1112/jlms/54.3.594}{J. London Math. Soc. \textbf{54} (1996), 594--624}. 
 
 \bibitem[Ke01]{K01}
 T. Kerler,
 \textit{Towards an Algebraic Characterization of 3-Di\-men\-sion\-al Cobordisms},
 \doi{10.1090/conm/318}{Diagrammatic Morphisms and Applications (San Francisco, CA, 2000), 141--173, Contemp. Math. \textbf{318}, Amer. Math. Soc., Providence, RI, 2003};
 \href{https://arXiv.org/abs/math/0106253}{\texttt{arXiv:math/0106253} [math.GT]}.
 
 \bibitem[KL01]{KL01}
 T.~Kerler, V.~Lyubashenko,
 \textit{Non-Semisimple Topological Quantum Field Theories for 3-Manifolds with Corners},
 \doi{10.1007/b82618}{Lecture Notes in Mathematics \textbf{1765}. Springer-Verlag, Berlin, 2001}.
 
 \bibitem[KR94]{KR94}
 L.~Kauffman, D.~Radford,
 \textit{Invariants of 3-Manifolds Derived From Finite Dimensional Hopf Algebras},
 \doi{10.1142/S0218216595000077}{J. Knot Theory Ramifications \textbf{4} (1995), no.~01, 131--162};
 \href{http://arXiv.org/abs/hep-th/9406065}{\texttt{arXiv:hep\-th/9406065}}.

 \bibitem[LO16]{LO16}
 S.~Lentner, T.~Ohrmann,
 \textit{Factorizable R-Matrices for Small Quantum Groups},
 \doi{10.3842/SIGMA.2017.076}{SIGMA Symmetry Integrability Geom. Methods Appl. \textbf{13} (2017), paper no.~076, 25 pp};
 \arxiv{1612.07960}{[math.QA]}.
 
 \bibitem[Ra12]{R12}
 D.~Radford,
 \textit{Hopf Algebras},
 \doi{10.1142/8055}{Series on Knots and Everything \textbf{49}, World Scientific Publishing Co. Pte. Ltd., Hackensack, NJ, 2012.}
 
 
 \bibitem[Tu10]{T10}
  V.~Turaev,
  \textit{Homotopy Quantum Field Theory},
  \doi{10.4171/086}{EMS Tracts in Mathematics \textbf{10}, European Mathematical Society (EMS), Zürich, 2010}.
 
 \bibitem[Vi00]{V00}
 A.~Virelizier,
 \textit{Hopf Group-Coalgebras},
 \doi{10.1016/S0022-4049(01)00125-6}{J. Pure Appl. Algebra 171 (2002), no. 1, 75–122},
 \href{https://arXiv.org/abs/math/0012073}{\texttt{arXiv:math/0012073} [math.QA]}. 	
 
\end{thebibliography}
\end{document}